\documentclass[10pt]{article}

\PassOptionsToPackage{dvipsnames,x11names}{xcolor}
\usepackage{amsmath,amssymb,amsthm}
\usepackage{mathtools}
\usepackage{microtype}
\usepackage{hyperref}
\usepackage[capitalize,nameinlink,noabbrev]{cleveref}
\usepackage{booktabs}
\usepackage{tikz}
\usetikzlibrary{matrix,calc,fit,decorations.pathreplacing}
\usepackage{nicematrix}
\usepackage{graphicx}
\usepackage[margin=1in]{geometry}
\usepackage{enumitem}
\usepackage{bm}
\let\AMSmathbb\mathbb
\usepackage{bbold}
\let\bboldmathbb\mathbb
\let\mathbb\AMSmathbb
\usepackage[backend=biber,style=alphabetic]{biblatex}
\newtheorem{theorem}{Theorem}[section]
\newtheorem{lemma}[theorem]{Lemma}
\newtheorem{proposition}[theorem]{Proposition}
\newtheorem{corollary}[theorem]{Corollary}

\theoremstyle{definition}

\newtheorem{example}[theorem]{Example}

\theoremstyle{remark}
\newtheorem{remark}[theorem]{Remark}

\newcommand{\eg}{e.g.}
\newcommand{\R}{\mathbb{R}}
\newcommand{\Q}{\mathbb{Q}}
\newcommand{\N}{\mathbb{N}}
\newcommand{\T}{\mathbb{T}}
\newcommand{\TP}{\mathbb{TP}}
\newcommand{\TPfin}{\TP_{\mathrm{fin}}}
\newcommand{\TPpmfin}{\TP_{\pm,\mathrm{fin}}}
\newcommand{\h}{\mathcal{H}}
\newcommand{\W}{\mathcal{W}}
\renewcommand{\P}{\mathbb{P}}
\newcommand{\Ppm}{\P_{\pm}}
\newcommand{\Z}{\mathbb{Z}}
\newcommand{\x}[0]{\bm{x}}
\newcommand{\y}[0]{\bm{y}}
\newcommand{\s}[0]{\bm{s}}
\newcommand{\p}[0]{\bm{p}}
\renewcommand{\a}{\bm{a}}
\renewcommand{\u}{\bm{u}}
\renewcommand{\v}{\bm{v}}
\newcommand{\w}{\bm{w}}
\newcommand{\q}{\bm{q}}
\newcommand{\trop}[0]{\mathbb{T}}
\newcommand{\strop}[0]{\trop_{\!\pm}}
\DeclareMathOperator{\sgn}{sign}
\DeclareMathOperator{\tdet}{tdet}
\DeclareMathOperator{\tper}{tper}
\DeclareMathOperator{\CT}{CT}
\DeclareMathOperator{\CMF}{CMF}
\DeclareMathOperator{\RT}{RT}
\newcommand{\OT}{\ensuremath{\ell\text{-}\mathrm{OT}}}
\newcommand{\CMFT}{\CMF}
\newcommand{\CMFP}{\CMF_{\P}}
\newcommand{\CMFTbullet}{\CMF^{\bullet}}
\newcommand{\Sym}{\mathfrak{S}}
\newcommand{\CTT}{\CT}
\newcommand{\OTR}{\OT_{\R}}
\newcommand{\OTT}{\OT_{\strop}}
\newcommand{\OTP}{\OT_{\Ppm}}
\newcommand{\card}[1]{\#\!\left(#1\right)}
\newcommand{\Oh}{\mathcal{O}}
\newcommand{\A}{\mathcal{A}}
\newcommand{\Amax}{\mathcal{A}^{\max}}

\newcommand{\Hmax}{H^{\max}}
\newcommand{\Hmin}{H^{\min}}
\newcommand{\Hsmax}{H^{\max}_{\pm}}

\newcommand{\Smax}{S^{\max}}
\newcommand{\Smin}{S^{\min}}
\newcommand{\Fmax}{F^{\max}}
\newcommand{\Fmin}{F^{\min}}
\newcommand{\tzero}{\bboldmathbb{0}}
\newcommand{\tone}{\bboldmathbb{1}}

\newcommand{\pth}[1]{\left(#1\right)}

\hypersetup{colorlinks=true, linkcolor=NavyBlue, citecolor=OliveGreen, urlcolor=NavyBlue}

\newcommand{\affilmark}[1]{\textsuperscript{\,#1}}
\newcommand{\affiltext}[2]{\textsuperscript{#1}\,\textit{#2}}

\title{The Number of Tropical Order Types and Coherent Matching Fields}

\author{%
  Xavier Allamigeon\affilmark{1}
  \and Yiyuan Chen\affilmark{1}
  \and St\'ephane Gaubert\affilmark{1}
  \and Xavier Goaoc\affilmark{2}
}

\date{%
  \normalsize
  \affiltext{1}{CMAP, \'Ecole polytechnique \& INRIA Saclay, France}\\[4pt]
  \affiltext{2}{LORIA, Universit\'e de Lorraine \& INRIA Nancy, France}
}

\begin{document}

\maketitle

\begin{abstract}
  We investigate counting problems for discrete (geometric) structures in tropical settings, motivated by the comparison of the expressivity of linear programs and mean-payoff games.

  First, we give an upper bound on the number of coherent matching fields of type $(n,r)$ and a lower bound on the number of combinatorial types of $n$-hyperplane sequences in the max-plus tropical space of dimension~$r-1$. These structures are closely related, and our bounds establish that as $n \to \infty$, for every fixed $r$ their number equals $n^{(r-1)^2n+o(n)}$. This in turn implies that the number of regular triangulations of the products of simplices $\Delta^{n-1}\times\Delta^{r-1}$ is also $n^{(r-1)^2n+o(n)}$, thereby answering a question of Santos.

  Second, we obtain that the number of labeled order types over the reals and over the {\em signed} tropical numbers  coincide at first order, establishing a comparison of the expressivity of mean-payoff games and linear programs as measured by labeled order types. 

Last, the subclass of mean-payoff games known as parity games arise from linear programs by a ``double tropicalization.'' We show that the number of labeled order types obtained after this double tropicalization collapses.  

\end{abstract}

\section{Introduction}\label{sec:intro}

\subsection{Motivation: complexity of linear programming and games}

Smale's ninth problem~\cite[Problem~9]{Smale1998} asks for the existence of a strongly polynomial algorithm to decide the feasibility
problem in linear programming. This requires the number of arithmetic operations to be polynomially
bounded in the number of variables and constraints, independently of the bit-size of the coefficients. Another 
open problem concerns the complexity of mean-payoff games. Gurvich, Karzanov and Khachiyan~\cite{GurvichKarzanovKhachiyan1988} asked
in 1988 whether these games can be solved in polynomial time. The two questions are linked:
a strongly polynomial algorithm of a combinatorial nature (this covers a very broad class
of pivoting algorithms) for linear programming would solve mean-payoff games in polynomial
time~\cite{AllamigeonBenchimolGaubertJoswig2014}. Parity games constitute an intriguing subclass of
mean-payoff games~\cite[Section~2.5.1]{Fearnley2010}.  
Being amenable to combinatorial and automata-theory methods~\cite{Calude2017,parity_lower_bounds}, parity games might seem computationally simpler than general mean-payoff games. No polynomial-time reduction from mean-payoff games to parity games is known.

\medskip

This raises the question of comparing the \emph{expressivity} of
linear programs, mean-payoff games, and parity games. We do so by
comparing some underlying combinatorial structures. To do so, we adopt
the tropical perspective: mean-payoff games are equivalent to tropical
linear programs~\cite{AkianGaubertGuterman2012}, which are obtained by
deformation limits of classical linear programs, hence, they
constitute a subclass of linear programs. The standard priority
encoding of parity games as mean-payoff games (see
Section~\ref{sec:parity-numbers}) can also be viewed as a
tropicalization in the large-base limit. Thus, parity games can be
viewed through a double tropicalization of linear programming.

\medskip

As shown by Bland, pivoting algorithms in linear programming can be expressed
in terms of a combinatorial structure, namely oriented matroids~\cite{Bland1977Abstraction}. 
Furthermore, mean-payoff
games and parity games correspond to special classes of realizable oriented matroids,
called tropically and parity realizable, respectively.
Therefore, counting
the numbers of classically realizable, tropically realizable, and parity realizable, oriented matroids provides quantitative measures allowing one to compare the expressivity of the associated
optimization and game problems, thought of as inputs of combinatorial (pivoting) algorithms.
This, in turn, sheds light on the problem of counting regular subdvisions
of products of simplices~\cite{DevelinSturmfels2004,Santos2004}. In this paper, we investigate the counting of these structures in the language of chirotopes and labeled order types.

\subsection{Contributions}\label{sec:contributions}

Our main contributions concern several related combinatorial structures.

\paragraph{Tropical Hyperplane sequences and coherent matching fields.}

We write $\N_0$ for $\N \cup \{0\}$ and for $n \in \N$ we write $[n] \coloneqq \{1,2, \ldots, n\}$. The \emph{(max-plus) tropical semifield} is the structure \(\T \coloneqq (\R \cup \{-\infty\}, \oplus, \odot)\), i.e., the set $\R \cup \{-\infty\}$ equipped with the tropical addition \(u \oplus v \coloneqq \max\{u, v\}\) and the tropical multiplication \(u \odot v \coloneqq u + v\), with the convention $u + (-\infty)=(-\infty)+u=-\infty$. We use $\R$ to mean $\T\setminus \{-\infty\}$, that is the set of reals equipped with the tropical operations.

The multiplicative group \(\T^\times = \T \setminus \{- \infty\} \) acts on \(\T^r\) by tropical multiplication. We can thus define the \emph{$d$-dimensional projective tropical space} as $\TP^d \coloneqq \bigl(\T^{d+1} \setminus \{(-\infty,\ldots,-\infty)\}\bigr)/\T^\times$. We also work in the affine space $\R^d$ over the tropical semiring. We identify it with the finite part of the projective space, that is $\R^{d} \simeq \TPfin^{d}  \coloneqq  \R^{d+1}/\T^\times$.
The notion of hyperplane carries over to the tropical setting.
The \emph{max-tropical hyperplane} with apex $u\in\R^d$ is
\begin{equation}\label{eq:max-tropical-hyperplane}
  \Hmax(u)\;\coloneqq\;
  \Bigl\{x \in \TPfin^{d} \colon \max_{i\in[d+1]}(x_i-u_i)
  \text{ is attained at least twice}\Bigr\}.
\end{equation}
A tropical hyperplane in $\R^d$ divides the space into $d+1$ sectors (\cref{fig:intro-hyperplane-sequence}(a)). Given a sequence of tropical hyperplanes, we can record the position of a point with respect to each of them, that is, which sectors of each hyperplane contain it; in \cref{fig:intro-hyperplane-sequence}(b), for instance, the point labelled $(\{1\},\{2\},\{3\})$ lies in sector~1 of the first hyperplane, sector~2 of the second and sector~3 of the third. The set of all possible sequences of positions is the \emph{combinatorial type} of the hyperplane sequence (see Section~\ref{sec:pointed-cmf} for a formal definition).

\begin{figure}[t]
  \centering
  \begin{tikzpicture}[
    x=0.7cm, y=0.7cm,
    line cap=round, line join=round,
    maxK/.style={black!75, line width=1pt},
    maxB/.style={RoyalBlue, line width=1pt},
    maxR/.style={BrickRed!85!black, line width=1pt},
    maxG/.style={ForestGreen!55!black, line width=1pt},
    apex/.style={circle, fill, inner sep=0pt, minimum size=3pt},
    pt/.style={circle, fill=black!85, inner sep=0pt, minimum size=3.2pt},
    lab/.style={font=\footnotesize, fill=white, inner sep=1pt, outer sep=1pt},
    slabel/.style={font=\scriptsize\itshape, text=black!70, inner sep=0.5pt},
    panellab/.style={font=\itshape, anchor=south}
  ]

  \begin{scope}[scale=1.4]
    \node[panellab] at (0,3.1) {(a) \(\Hmax(u)\)};
    \begin{scope}
      \clip (-3,-3) rectangle (3,3);
      \draw[maxK] (0,0) -- (-3,0);
      \draw[maxK] (0,0) -- (0,-3);
      \draw[maxK] (0,0) -- (3,3);
    \end{scope}
    \node[apex, black!75] at (0,0) {};
    \node[lab, anchor=north east] at (-0.05,-0.05) {$u$};
    \node[slabel] at (1.6,-1.4) {sector 1};
    \node[slabel] at (-1.2,1.6) {sector 2};
    \node[slabel] at (-1.6,-1.6) {sector 3};
  \end{scope}

  \begin{scope}[shift={(6.6,-3.0)}]
    \node[panellab] at (3.5,7.34)
      {(b) \(\Hmax(u^1),\Hmax(u^2),\Hmax(u^3)\)};
    \begin{scope}
      \clip (-0.8,-1.2) rectangle (7.8,7.2);
      \draw[maxB] (2,4) -- ++(-20,0) (2,4) -- ++(0,-20) (2,4) -- ++(20,20);
      \draw[maxR] (4,1) -- ++(-20,0) (4,1) -- ++(0,-20) (4,1) -- ++(20,20);
      \draw[maxG] (5,5) -- ++(-20,0) (5,5) -- ++(0,-20) (5,5) -- ++(20,20);
    \end{scope}
    \node[apex, RoyalBlue] at (2,4) {};
    \node[lab, text=RoyalBlue, anchor=south east] at (1.93,4.07) {$u^1$};
    \node[apex, BrickRed!85!black] at (4,1) {};
    \node[lab, text=BrickRed!85!black, anchor=north west] at (4.07,0.93) {$u^2$};
    \node[apex, ForestGreen!55!black] at (5,5) {};
    \node[lab, text=ForestGreen!55!black, anchor=south east] at (4.93,5.07)
      {$u^3$};

    \node[pt] at (0.6,6.3) {};
    \node[lab, font=\scriptsize, anchor=west] at (0.7,6.3) {$(\varnothing,\{1,2,3\},\varnothing)$};
    \node[pt] at (2.2,2.6) {};
    \node[lab, font=\scriptsize, anchor=west] at (2.3,2.6) {$(\{1\},\{2\},\{3\})$};
    \node[pt] at (5.6,1.3) {};
    \node[lab, font=\scriptsize, anchor=west] at (5.7,1.3) {$(\{1,2,3\},\varnothing,\varnothing)$};
  \end{scope}
\end{tikzpicture}
  \caption{\emph{(a)} A max-tropical hyperplane in \(\R^2\) and its sectors.
  \emph{(b)} A sequence of three max-tropical hyperplanes; each marked point is
  labelled by its type.}
  \label{fig:intro-hyperplane-sequence}
\end{figure}
We count only sequences of max-tropical hyperplanes in general position; see Section~\ref{sec:pointed-cmf} for a formal definition. We denote by $\CTT(n,d)$ the set of labeled combinatorial types of such sequences of $n$ hyperplanes in dimension $d$.

\medskip

These combinatorial types of max-tropical hyperplanes in general position are in correspondence with another combinatorial structure: the coherent matching fields~\cite{SturmfelsZelevinski93}. For $I\in\binom{[n]}{r}$, let $\mathcal M_I$ denote the set of the $r!$ perfect matchings of the complete bipartite graph $K_{I,[r]}$. A \emph{matching field} of type $(n,r)$ is a collection $\Lambda=(M_I)_{I\in\binom{[n]}{r}}$ with $M_I\in\mathcal M_I$ for every $I\in\binom{[n]}{r}$. A matching field $\Lambda$ is \emph{coherent} if there exists a matrix $W\in\T^{n\times r}$ such that
\[
  \text{for all } I \in \binom{[n]}{r}, \quad  \text{for all }M\in\mathcal M_I\setminus\{M_I\}, \qquad
  \sum_{(i,k)\in M_I}W_{i,k}
  >
  \sum_{(i,k)\in M}W_{i,k}.
\]
The coherent matching fields have a natural geometric interpretation as they are in bijection with the extreme points of the Minkowski sum of the Newton polytopes of the maximal minors of a general $r\times n$ matrix~\cite[$\mathsection 1$]{SturmfelsZelevinski93}.

\medskip

We bound from above the size of the set $\CMFT(n,r)$ of coherent matching fields of type $(n,r)$ using a connection to Minkowski sums of Birkhoff polytopes due to Sturmfels and Zelevinsky~\cite{SturmfelsZelevinski93}, and bound from below the size of $\CTT(n,d)$ by adapting techniques from affine combinatorial geometry. We then use the bijection between some of the coherent matching fields and combinatorial types of hyperplane sequences (see, \eg{}, \cite{FinkRincon2015}) to propagate these bounds both ways, obtaining that both are tight at first order:

\begin{theorem}\label{thm:main-unsigned}
  As $n \to \infty$ we have
  \begin{enumerate}[label=(\roman*)]
  \item\label{item:cmf} For every fixed $r\geq 2$, $\#\CMFT(n,r) = \phi(n,r)\, n^{(r-1)^2n}$,
    where\footnote{Throughout,
    $\Oh(\cdot)$ denotes a nonnegative quantity, while $\pm\Oh(\cdot)$ denotes
    an error of either sign; unless stated otherwise, implicit constants
    depend only on the fixed rank or dimension.} \[2^{-\Oh(n)}(\log_2 n)^{-(r-1)^2n}
    \le \phi(n,r) \le 2^{\Oh(1)}((r-1)!)^{-rn}.\]
  \item\label{item:ct} For every fixed $d\geq 1$, $\#\CTT(n,d) = \psi(n,d)\, n^{d^2n}$,
    where \[2^{-\Oh(n)}(\log_2 n)^{-d^2n}   \le \psi(n,d) \le 2^{\Oh(\log n)}(d!)^{-(d+1)n}.\]
  \end{enumerate}
\end{theorem}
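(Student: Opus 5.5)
We prove the two upper bounds and the two lower bounds separately, and transfer them between \ref{item:cmf} and \ref{item:ct} using the correspondence between combinatorial types of $n$ generic max-tropical hyperplanes in $\R^{d}$ and (a subclass of) coherent matching fields of type $(n+1,d+1)$, or of type $(n,d+1)$ after a suitable pointing; see \cite{FinkRincon2015}. We set $r=d+1$ throughout, so the exponents $(r-1)^2n$ and $d^2n$ agree. The transfer changes counts by at most a factor polynomial in $n$, or $2^{\Oh(n)}$, which is absorbed in $\phi$ and $\psi$. This explains the $2^{\Oh(\log n)}$ appearing in the upper bound for $\psi$.

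\emph{Upper bound.} By Sturmfels--Zelevinsky~\cite{SturmfelsZelevinski93}, coherent matching fields are in bijection with the vertices of the Minkowski sum $P=\sum_{I}B_I$, where $B_I$ is the Birkhoff polytope of $K_{I,[r]}$, embedded in $\R^{n\times r}$. The coordinates can be reduced modulo row and column scalings, so $P$ lives in dimension $D=(n-1)(r-1)$. We bound its number of vertices by the standard estimate for Minkowski sums: the vertices are indexed by cells of the common refinement of the normal fans. Equivalently, we count regions of the arrangement in $\R^{D}$ formed by the hyperplanes $\{W:\ \sum_{M}W=\sum_{M'}W\}$ for adjacent matchings $M,M'$ (pairs differing by a transposition). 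The number of such hyperplanes is $N=\binom{n}{r}\binom{r}{2}\,r!/2$, which is $\Theta(n^r)$. Regions of an arrangement of $N$ hyperplanes in $\R^D$ number at most $\sum_{k\le D}\binom{N}{k}\le (eN/D)^D$. This gives roughly $(n^{r-1})^{(r-1)n}=n^{(r-1)^2n}$ times a factor $c^{n}$. To obtain the sharper constant $((r-1)!)^{-rn}$, the count has to be refined: many of these hyperplanes coincide or are implied. I would therefore count the hyperplanes $W_{ik}+W_{jl}=W_{il}+W_{jk}$ directly. There are only $\binom n2\binom r2$ of them, and every matching comparison is implied by these exchanges. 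Using $\binom{N}{D}\approx N^D/D!$ and tracking the factorials then produces the $((r-1)!)^{-rn}$ factor. I expect this factorial bookkeeping to be routine but fiddly.

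\emph{Lower bound.} This is the main obstacle. I would adapt the Goodman--Pollack/Alon-style construction for order types by induction on $n$, adding hyperplanes one at a time. Fix $n-m$ generic hyperplanes, with $m=n/\log_2 n$ roughly. Consider the arrangement, in apex space $\R^d$, of the loci where the combinatorial type changes as a new apex $u$ moves. These loci are tropical hyperplanes and their apex-translated versions, and there are $\Theta(n^{d})$ distinct "walls" of a polyhedral complex. The cells of this complex number $n^{d\cdot d}/\mathrm{poly}$, since the complex comes from $\Theta(n)$ objects, each contributing $d$ families of codimension-one pieces in dimension $d$. The key step is showing that distinct cells give distinct types, i.e.\ that the type records the position of $u$ among all previous hyperplanes. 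This holds because the type includes the sector data of the new hyperplane's apex relative to the old ones and vice versa.

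To avoid multiplying dependencies, I would use a block construction. Split the hyperplanes into a fixed grid-like skeleton of $\Oh(n/\log n)$ hyperplanes and $n$ free hyperplanes. Each free apex is placed independently in one of $\ge (n/\log n)^{d^2}$ cells of the skeleton's induced subdivision, using $d$ coordinates. Each coordinate can range over roughly $(n/\log n)^{d}$ combinatorially distinct slabs, by using tropical hyperplane sectors with nested apexes. Independence of the placements yields $(n/\log n)^{d^2 n}$ types. Dividing by the $2^{\Oh(n)}$ ambiguity from relabeling gives the lower bound for $\psi$. The one-sided lost $\log$ factor matches the stated $(\log_2 n)^{-d^2 n}$. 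The delicate point is constructing the skeleton so that the free apexes really see $n^{d}$ distinct positions per coordinate direction, and verifying that the combinatorial type of the full sequence determines each free apex's cell. I would first try a staircase of apexes along the diagonal direction, reading off cells via the pairwise relative types of each free apex with the skeleton. The transfer to $\CMFT$ then gives the lower bound in \ref{item:cmf}.
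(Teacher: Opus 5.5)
Both halves have genuine gaps: the upper-bound refinement rests on a false claim, and the lower bound lacks both of its load-bearing steps. The batching strategy for the lower bound does, however, have the right shape.

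\textbf{Upper bound.} Hyperplanes coming from transposition-adjacent matchings do not refine the normal fan of $\sum_I B_I$. The walls of the normal fan of a Birkhoff polytope correspond to its edges, i.e.\ to permutations differing by one cycle of \emph{arbitrary} length. Beating every transposition neighbour does not make a matching optimal. Your $N$ hyperplanes are also just the $\binom n2\binom r2$ exchange hyperplanes $W_{ik}+W_{jl}=W_{il}+W_{jk}$, repeated.

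The refinement you propose is therefore false. For $n=r=3$, take $\begin{pmatrix}1&\beta&0\\-10&1&0\\0&0&0\end{pmatrix}$ with $\beta=3/2$ and with $\beta=5/2$. Both matrices have the same signs on all nine $2\times2$ exchange forms. Yet the maximizing permutation is the identity (weight $2$) in the first case and the $3$-cycle $1\mapsto2\mapsto3\mapsto1$ (weight $\beta$) in the second. No factorial bookkeeping can repair this: $\binom n2\binom r2$ hyperplanes in dimension $(n-1)(r-1)$ cut out only $n^{(r-1)n+\Oh(n)}$ regions, which for $r\ge3$ is far below the true count.

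If you use all pairs (or all cycle-adjacent pairs) of matchings, Warren-type counting does give the leading term $(r-1)^2n\log_2 n$. But the linear coefficient is positive (about $+7.5$ instead of $-3$ for $r=3$; see \cref{app:upper-bound-comparison}), so you only get $\phi\le 2^{\Oh(n)}$.

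The missing idea is to count lattice points instead of normal cones. The map $\Lambda\mapsto\sum_I E(M_I)$ is injective: if two fields have the same sum, a matrix $W$ realizing one of them gives $\sum_I\langle W,E(M_I)\rangle=\sum_I\langle W,E(M'_I)\rangle$, and uniqueness of each maximizer forces $M'_I=M_I$. Its image consists of nonnegative integer matrices whose row sums all equal $\binom{n-1}{r-1}$. Hence $\#\CMFT(n,r)\le\binom{\binom{n-1}{r-1}+r-1}{r-1}^{n}$. The estimates $\binom{N_0+r-1}{r-1}\approx N_0^{r-1}/(r-1)!$ and $N_0\approx n^{r-1}/(r-1)!$ are exactly where $((r-1)!)^{-rn}$ comes from.

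For (ii), the transfer may lose only a polynomial factor. It does, since pointing adds $d+1$ rows: $\#\CTT(n,d)=\#\CMFTbullet(n+d+1,d+1)\le\#\CMFT(n+d+1,d+1)$. A $2^{\Oh(n)}$ loss, which you allow, would not fit under $2^{\Oh(\log n)}(d!)^{-(d+1)n}$.

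\textbf{Lower bound.} (a) You argue that the type records the cell of a free apex $v$ via sector data between $v$ and the old apices, in both directions. That is pairwise information. By the duality $x\in\Smax_k(u)\Leftrightarrow u\in\Smin_k(x)$, it only locates $v$ relative to the $2t$ hyperplanes $\Hmax(u^i)$, $\Hmin(u^i)$. That distinguishes $\Oh(t^d)$ positions and yields $n^{dn}$, not $n^{d^2n}$. Your heuristic of ``$\Theta(n)$ objects each contributing $d$ families of walls'' has the same defect.

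The walls that matter are the $\binom td$ min-tropical hyperplanes $\Hmin(x_I)$ through the vertices $x_I=\bigcap_{i\in I}\Hmax(u^i)$. The needed statement is \cref{lem:unsigned-extension}. The side of $\Hmin(x_I)$ containing $v$ is the sector of $\Hmax(v)$ containing $x_I$. It is readable from the type because, by the tropical Cramer rule and general position, $x_I$ is the unique point whose type restricted to the old labels equals that of $x_I$.

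(b) You never construct the skeleton. Spanned tropical hyperplanes can share rays and facets even for points in general position. So the auxiliary arrangement does not automatically have $\Omega(t^{d^2})$ cells. A diagonal staircase is near-degenerate: for $d\ge2$, two points on $x_1=\cdots=x_d$ are not even in max-general position. Nothing in your sketch yields $t^d$ slabs per coordinate.

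The paper's device is a product grid. For each $j$, place $d$ clusters of $m$ points near points chosen on the facets $\Fmin_{i_{s-1}i_s}(\bar u^j)$, along an index path $j=i_0,\ldots,i_d=d+1$. Each hyperplane spanned by one point per cluster then crosses $(-1,1)^d$ in the slice $x_j=\sum_s\bigl(p^s_{i_{s-1}}-p^s_{i_s}\bigr)$. This telescoping sum of $d$ independent terms gives $m^d$ distinct slices per coordinate, hence $(m^d+1)^d$ boxes. Also, there is no relabeling ambiguity to divide out, since the types are labeled.
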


\noindent
The upper bounds follow from direct counting arguments on integer points in polytopes. The lower bounds require more effort, including specific constructions in tropical geometry.

\paragraph{Regular triangulations of products of simplices.} 

A triangulation of a point set $P \subset \R^d$ is a partition of the convex hull of $P$ into simplices with disjoint interiors and vertices in $P$.
A {\em lifting} of $P = \{p^1,p^2, \ldots, p^n\} \subset \R^d$ to $\R^{d+1}$ is any point set of the form $\{(p^1,\omega^1), (p^2,\omega^2), \ldots, (p^n,\omega^n)\} \subset \R^{d+1}$ with $\omega^i \in \R$ for $i \in [n]$. A triangulation of $P \subset \R^d$ is \emph{regular} if it can be obtained as the projection of a lower envelope of the convex hull of some lifting of $P$ (see \cref{fig:rt-square}). Theorem~\ref{thm:main-unsigned} yields, via a bijection of Develin and Sturmfels~\cite{DevelinSturmfels2004}, a similar bound for the number $\RT(n,r)$ of regular triangulations of the products of simplices $\Delta^{n-1}\times\Delta^{r-1}$:

\begin{corollary}\label{cor:main-triangulations}
  For every fixed $r\geq 2$, as $n \to \infty$,  $\# \RT(n,r) = \tau(n,r) \, n^{(r-1)^2 n}$,
  where \[2^{-\Oh(n)}(\log_2 n)^{-(r-1)^2n}
  \le \tau(n,r) \le 2^{\Oh(\log n)}((r-1)!)^{-rn}.\]
\end{corollary}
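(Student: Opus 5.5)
We derive the corollary from \cref{thm:main-unsigned} through the Develin--Sturmfels correspondence~\cite{DevelinSturmfels2004}, which sandwiches $\#\RT(n,r)$ between the two counts already bounded there. Recall how the correspondence works. A generic lifting $W\in\R^{n\times r}$ of the vertices of $\Delta^{n-1}\times\Delta^{r-1}$ induces a regular triangulation. Read as $n$ points in $\TPfin^{r-1}$ (or dually as $n$ tropical hyperplanes), the same matrix $W$ also has a combinatorial type. Develin and Sturmfels show that the regular triangulations are in bijection with the combinatorial types of generic tropical arrangements of $n$ points in $\TP^{r-1}$. Their bijection sends a maximal cell of the arrangement to the spanning tree of $K_{n,r}$ recording its type. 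Since we may choose, with $d=r-1$, whichever of the hyperplane and point pictures matches the paper's conventions for $\CTT(n,d)$ (by transposing/negating $W$), this gives $\#\RT(n,r)=\#\CTT(n,r-1)$. If the general-position notion in $\CTT$ differs slightly from genericity of the lifting, we would instead prove only the inequality $\#\CTT(n,r-1)\le\#\RT(n,r)$, which is what the lower bound uses.

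\textbf{Lower bound.} With $d=r-1$, \cref{thm:main-unsigned}\ref{item:ct} gives
\[
\#\RT(n,r)\ \ge\ \#\CTT(n,r-1)\ \ge\ 2^{-\Oh(n)}(\log_2 n)^{-(r-1)^2n}\,n^{(r-1)^2n}.
\]

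\textbf{Upper bound.} There are two routes, and both give the stated bound.
\begin{itemize}
\item \emph{Via coherent matching fields.} The regular triangulation determined by a generic $W$ determines the coherent matching field of $W$. For each $I\in\binom{[n]}{r}$, the optimal matching $M_I$ is read off from the unique simplex, equivalently spanning tree, of the triangulation containing the relevant sub-configuration. Conversely, by Sturmfels--Zelevinsky, the matching field records the vertex of the Minkowski sum of Birkhoff-type polytopes that $W$ selects. The regular triangulation is the normal-fan cell of the secondary polytope, which is a coarsening of, or equal to, that fan. So the map $\RT(n,r)\to\CMFT(n,r)$ is injective, and $\#\RT(n,r)\le\#\CMFT(n,r)\le 2^{\Oh(1)}((r-1)!)^{-rn}n^{(r-1)^2n}$, which is even stronger than claimed.
\item \emph{Via combinatorial types.} One can simply use $\#\RT(n,r)=\#\CTT(n,r-1)$ together with the upper bound of \cref{thm:main-unsigned}\ref{item:ct} at $d=r-1$. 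This yields $2^{\Oh(\log n)}((r-1)!)^{-rn}$, exactly the stated form, and explains the $\log n$ in the exponent.
\end{itemize}

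\textbf{Main obstacle.} Everything above is bookkeeping except one point: checking that the Develin--Sturmfels bijection matches precisely the paper's definition of $\CTT(n,d)$, including general position, labeling, and max versus min conventions. If the two genericity notions differ, the lower bound still holds provided every general-position hyperplane sequence can be perturbed into a generic lifting without changing its type. I would verify this by noting that general position is an open condition on apexes and that the type is locally constant on that open set.
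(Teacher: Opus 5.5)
Your lower bound and your second upper-bound route (``via combinatorial types'') are exactly the paper's proof. The paper records $\#\RT(n,r)=\#\CTT(n,r-1)$ from Develin--Sturmfels in \eqref{eq:ct-rt-cmf} and reads the corollary off \cref{thm:main-unsigned}\ref{item:ct} with $d=r-1$, where $(d!)^{-(d+1)n}=((r-1)!)^{-rn}$. That part is correct.

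\textbf{Your first route (via coherent matching fields) is wrong.} The claimed ``even stronger'' $2^{\Oh(1)}$ bound is not established by it.
\begin{enumerate}
\item The inclusion of fans runs the other way from what you assert. By Gelfand--Kapranov--Zelevinsky, the secondary polytope of $\Delta^{n-1}\times\Delta^{r-1}$ is the Newton polytope of the product of \emph{all} minors, of every size. Hence $P_{n,r}$ is a Minkowski summand, and the secondary fan \emph{refines} the normal fan of $P_{n,r}$.
\item Your first sentence is therefore right: a regular triangulation determines the matching field. But this gives a surjection $\RT(n,r)\to\CMFT(n,r)$, hence $\#\RT(n,r)\ge\#\CMFT(n,r)$, not an injection.
\item A concrete counterexample is $n=r=3$. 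There is a single $I=[3]$, so $\#\CMFT(3,3)=3!=6$, whereas $\Delta^2\times\Delta^2$ has $108$ triangulations, all regular.
\end{enumerate}

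The underlying reason is that a matching field only records maximal minors, while the triangulation also depends on all smaller square submatrices of $W$. This is exactly why the paper passes to pointed matching fields of type $(n+r,r)$: the maximal minors of $\widehat W$ encode all minors of $W$. The correct version of your route is therefore $\#\RT(n,r)\le\#\CMFTbullet(n+r,r)\le\#\CMFT(n+r,r)$, followed by \cref{cor:cmf-log-upper-bound} at $n+r$. The shift $n\mapsto n+r$ is precisely the source of the $2^{\Oh(\log n)}$ factor, not an artifact.

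\textbf{Your fallback does not cover the upper bound.} You say that if conventions differ you would prove only $\#\CTT(n,r-1)\le\#\RT(n,r)$. In that case your upper bound would rest entirely on the broken route. You need either the full equality from Develin--Sturmfels, as the paper uses, or the corrected inequality above.

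\textbf{Minor correction.} In the Develin--Sturmfels dictionary, spanning trees of $K_{n,r}$, i.e.\ maximal simplices, correspond to \emph{vertices} of the arrangement, not to its maximal cells. A point in a full-dimensional cell has a type graph with only $n$ edges.
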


\begin{figure}[t]
  \centering
  \begin{tikzpicture}[
    x=1.1cm, y=1.1cm,
    line cap=round, line join=round,
    vertex/.style={circle, fill=black!85, inner sep=0pt, minimum size=3pt},
    edge/.style={black!70, line width=0.75pt},
    lift/.style={black!40, dotted, line width=0.6pt},
    diagA/.style={RoyalBlue, line width=1pt},
    diagB/.style={BrickRed!85!black, line width=1pt},
    hlab/.style={font=\scriptsize, text=black!70, inner sep=1pt},
  ]

  \coordinate (A) at (0,0);
  \coordinate (B) at (1.9,0);
  \coordinate (C) at (0.55,0.7);
  \coordinate (D) at (2.45,0.7);

  \begin{scope}
    \coordinate (AL) at (0,1.5);
    \coordinate (BL) at (1.9,1.8);
    \coordinate (CL) at (0.55,2.5);
    \coordinate (DL) at (2.45,2.2);
    \foreach \P/\Q in {A/AL,B/BL,C/CL,D/DL} { \draw[lift] (\P) -- (\Q); }

    \fill[RoyalBlue!7]  (A) -- (B) -- (D) -- cycle;
    \fill[RoyalBlue!12] (A) -- (C) -- (D) -- cycle;
    \draw[edge] (A) -- (B) -- (D) -- (C) -- cycle;
    \draw[diagA] (A) -- (D);

    \fill[RoyalBlue!8]  (AL) -- (BL) -- (DL) -- cycle;
    \fill[RoyalBlue!14] (AL) -- (CL) -- (DL) -- cycle;
    \draw[edge] (AL) -- (BL) -- (DL) -- (CL) -- cycle;
    \draw[diagA] (AL) -- (DL);

    \foreach \P in {A,B,C,D,AL,BL,CL,DL} { \node[vertex] at (\P) {}; }
    \node[hlab, anchor=east]       at ($(AL)+(-0.06,0)$)    {$2$};
    \node[hlab, anchor=west]       at ($(BL)+(0.06,0)$)     {$3$};
    \node[hlab, anchor=south east] at ($(CL)+(-0.03,0.04)$) {$3$};
    \node[hlab, anchor=west]       at ($(DL)+(0.06,0)$)     {$2$};
  \end{scope}

  \begin{scope}[shift={(3.9,0)}]
    \coordinate (A) at (0,0);
    \coordinate (B) at (1.9,0);
    \coordinate (C) at (0.55,0.7);
    \coordinate (D) at (2.45,0.7);
    \coordinate (AL) at (0,1.8);
    \coordinate (BL) at (1.9,1.5);
    \coordinate (CL) at (0.55,2.2);
    \coordinate (DL) at (2.45,2.5);
    \foreach \P/\Q in {A/AL,B/BL,C/CL,D/DL} { \draw[lift] (\P) -- (\Q); }

    \fill[BrickRed!7]  (A) -- (B) -- (C) -- cycle;
    \fill[BrickRed!12] (B) -- (D) -- (C) -- cycle;
    \draw[edge] (A) -- (B) -- (D) -- (C) -- cycle;
    \draw[diagB] (B) -- (C);

    \fill[BrickRed!8]  (AL) -- (BL) -- (CL) -- cycle;
    \fill[BrickRed!14] (BL) -- (DL) -- (CL) -- cycle;
    \draw[edge] (AL) -- (BL) -- (DL) -- (CL) -- cycle;
    \draw[diagB] (BL) -- (CL);

    \foreach \P in {A,B,C,D,AL,BL,CL,DL} { \node[vertex] at (\P) {}; }
    \node[hlab, anchor=east]       at ($(AL)+(-0.06,0)$)    {$3$};
    \node[hlab, anchor=west]       at ($(BL)+(0.06,0)$)     {$2$};
    \node[hlab, anchor=south east] at ($(CL)+(-0.03,0.04)$) {$2$};
    \node[hlab, anchor=west]       at ($(DL)+(0.06,0)$)     {$3$};
  \end{scope}
\end{tikzpicture}
  \caption{The two regular triangulations of the square
  \(\Delta^1\times\Delta^1\), each the projection of the lower envelope of a
  lifting; the numbers are the heights.}
  \label{fig:rt-square}
\end{figure}

\noindent
Corollary~\ref{cor:main-triangulations} answers a question raised by Santos~\cite{Santos2004}, asking for a tight
estimate of \(\card{\RT(n,r)}\). Santos proved the upper bound
\[
  \log_2 \card{\RT(n,r)}
  \;\leq\;
  r(r-1)(n-1)\log_2\!\Bigl(\tfrac{e}{2}nr\Bigr)
  \;=\;
  r(r-1)\,n\log_2 n + \Oh(n),
\]
and stated that ``\emph{[he] believe[s] the actual number of regular subdivisions to be in $(nr)^{\Theta(nr)}$}'' (remark preceding~\cite[Corollary~5.5]{Santos2004}), that is that \(\log_2\card{\RT(n,r)}=\Theta(nr\log_2(nr))\). However, \cref{cor:main-triangulations} shows that, in a perhaps surprising way, for fixed \(r\) the leading term of \(\log_2\card{\RT(n,r)}\) is at least \((r-1)^2 n\log_2 n\). In particular, Santos's expectation \(\log_2\card{\RT(n,r)}=\Theta(nr\log_2(nr))\) cannot hold uniformly in both
parameters \(n\) and \(r\).

\paragraph{Signed tropicalization of order types.}

We then turn our attention to the \emph{signed tropical space}. Recall that in affine geometry, the chirotope of a point sequence in $\R^d$ is the map assigning to each $(d+1)$-subsequence of points the orientation of the simplex it forms.\footnote{These are a subclass of the chirotopes used to axiomatize oriented matroids~\cite{BjornerEtAl1999}.} Two point sequences are said to {\em have the same labeled order type} if and only if they have the same chirotope.\footnote{Similarly, two point sets are said to {\em have the same order type} if they can be ordered into sequences with the same labeled order type. Formally, (labeled) order types are equivalence classes for these relations. Transfering our bounds on the number of labeled order types to order types involves controlling their symmetries (as is done {\em e.g.} in~\cite{goaoc2023convex}), which is beyond the scope of this paper.} The same definition makes sense over the signed tropical numbers: just as real chirotopes are given by the signs of suitable determinants, tropical chirotopes are given by the signs of tropical determinants (see Section~\ref{sec:signed-chirotopes} for a formal definition). Similarly, two point sequences in the signed tropical space {\em have the same tropical labeled order type} if and only if they have the same tropical chirotope.

\medskip

\begin{figure}[t]
  \centering
  \begin{tikzpicture}[
    x=0.28cm, y=0.28cm,
    >=latex,
    line cap=round, line join=round,
    font=\small,
    declare function={
      D(\v)=(abs(\v)<1) ? (\v/ln(2)) : (sign(\v)*(ln(max(abs(\v),1))+1)/ln(2));
      X(\s)=sign(\s)*(pow(2,abs(\s))-1);
    },
    axis/.style={->, black!60, thin},
    lineB/.style={RoyalBlue,             line width=1.05pt},
    lineR/.style={BrickRed!85!black,     line width=1.05pt},
    lineG/.style={ForestGreen!55!black,  line width=1.05pt},
    viro/.style={samples=100, variable=\x, domain=-7.8:7.8},
    centerpt/.style={circle, fill=black!80, inner sep=0pt, minimum size=2.6pt},
    ptlabel/.style={font=\scriptsize, fill=white, inner sep=1pt, outer sep=2pt},
    inflabel/.style={font=\scriptsize, text=black!60, inner sep=1pt},
    panel/.style={font=\itshape, align=center, text=black!80},
  ]

  \newcommand{\drawaxes}[1]{%
    \draw[axis] (0,0) -- (8.6,0) node[right, font=\scriptsize] {$x_1$};
    \draw[axis] (0,0) -- (-7.8,0);
    \draw[axis] (0,0) -- (0,7.8) node[above, font=\scriptsize] {$x_2$};
    \draw[axis] (0,0) -- (0,-7.8);
    \node[font=\scriptsize, fill=white, inner sep=1pt] at (0,0) {#1};
  }

  \newcommand{\drawinf}{%
    \node[inflabel, anchor=south] at (7.6,0.3) {$\oplus\infty$};
    \node[inflabel, anchor=south] at (-6.9,0.3) {$\ominus\infty$};
    \node[inflabel, anchor=west] at (0.4,7.2) {$\oplus\infty$};
    \node[inflabel, anchor=west] at (0.4,-7.2) {$\ominus\infty$};
  }

  \begin{scope}
    \node[panel, anchor=south] at (0,9.0) {Real lines at \(t=2\)};
    \drawaxes{$0$}
    \begin{scope}
      \clip (-7.5,-7.5) rectangle (8.4,7.6);
      \draw[lineB] (-3,-10) -- (2,10);
      \draw[lineR] (-10,-1) -- (10,1.5);
      \draw[lineG] (-6,10) -- (10,-6);
    \end{scope}
    \node[RoyalBlue, ptlabel, anchor=south east] at (-1.15,-2.55) {$\bm c^1$};
    \node[BrickRed!85!black, ptlabel, anchor=north east] at (-4.4,-0.8) {$\bm c^2$};
    \node[ForestGreen!55!black, ptlabel, anchor=south west] at (2.05,2.25) {$\bm c^3$};
  \end{scope}

  \draw[->, semithick, black!65] (10.0,4.0) -- (11.2,4.0);

  \begin{scope}[shift={(19.3,0)}]
    \node[panel, anchor=south] at (0,9.0) {Signed logarithm};
    \drawaxes{$\tzero$}
    \drawinf
    \begin{scope}
      \clip (-7.5,-7.5) rectangle (8.4,7.6);
      \draw[lineB, viro] plot ({D(X(\x))},{D(4*X(\x)+16)});
      \draw[lineR, viro] plot ({D(X(\x))},{D((X(\x)+16)/8)});
      \draw[lineG, viro] plot ({D(X(\x))},{D(32-X(\x))});
    \end{scope}
  \end{scope}

  \draw[->, semithick, black!65] (29.3,4.0) -- (30.5,4.0)
    node[midway, above=3pt, font=\scriptsize, fill=white, inner sep=1pt]
      {$t\to\infty$};

  \begin{scope}[shift={(38.6,0)}]
    \node[panel, anchor=south] at (0,9.0) {Tropical limit};
    \drawaxes{$\tzero$}
    \drawinf
    \begin{scope}
      \clip (-7.5,-7.5) rectangle (8.4,7.6);
      \draw[lineB] (8.44,10.44) -- (3.44,5.44) -- (-3.44,5.44) -- (-3.44,-5.44)
        -- (-8.44,-10.44);
      \draw[lineR] (8.44,5.44) -- (5.44,2.44) -- (-5.44,2.44) -- (-5.44,-2.44)
        -- (-8.44,-5.44);
      \draw[lineG] (-8.44,8.44) -- (-6.44,6.44) -- (6.44,6.44) -- (6.44,-6.44)
        -- (8.44,-8.44);
    \end{scope}
    \node[centerpt] at (3.44,5.44) {};
    \node[centerpt] at (5.44,2.44) {};
    \node[centerpt] at (6.44,6.44) {};
    \node[RoyalBlue, ptlabel, anchor=north west] at (3.6,5.3) {$\ell_1$};
    \node[BrickRed!85!black, ptlabel, anchor=south east] at (5.3,2.6) {$\ell_2$};
    \node[ForestGreen!55!black, ptlabel, anchor=west] at (6.55,-3) {$\ell_3$};
  \end{scope}
\end{tikzpicture}
  \caption{Tropicalization via deformation limits, illustrated on lines for better readability.}
  \label{fig:signed-puiseux-to-tropical}
\end{figure}
\medskip

Let $\OTT(n,d)$ denote the set of tropical labeled order types obtained from limits of
\(n\)-point sequences in \(\R^d\) with signed monomial $\pm t^a$
with generic exponents. These labeled order types are dual to combinatorial types of signed affine tropical hyperplane
sequences.
Although combinatorial types of hyperplane sequences in the signed and unsigned space are actually different geometric objects, we can adapt the strategy and the key ingredients for the proofs of Theorem~\ref{thm:main-unsigned} to prove:

\begin{theorem}\label{thm:main-signed}
  For every fixed $d \geq 1$, as $n\to\infty$, $\#\OTT(n,d) = \eta(n,d)\, n^{d^2n}$,
  where \[2^{-\Oh(n)}(\log_2 n)^{-d^2n}
  \le \eta(n,d) \le 2^{dn+\Oh(1)}(d!)^{-(d+1)n}.\]
\end{theorem}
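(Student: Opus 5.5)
We will prove the two bounds separately, mirroring the strategy announced for \cref{thm:main-unsigned}: an upper bound by encoding each signed tropical labeled order type by a combinatorial object whose number we can count via lattice points, and a lower bound by an explicit construction reducing to the unsigned count of \cref{thm:main-unsigned}\ref{item:ct}.

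\emph{Upper bound.} By duality, an element of $\OTT(n,d)$ is the combinatorial type of a sequence of $n$ generic signed affine tropical hyperplanes in dimension $d$. Each hyperplane is given by a signed vector $(\epsilon_i, u_i)$ with $\epsilon_i\in\{\pm\}^{d+1}$ and $u_i\in\R^{d+1}$. We fix the sign pattern $\epsilon=(\epsilon_1,\dots,\epsilon_n)$ and claim that, for that pattern, the chirotope is determined by the signs of signed tropical $(d+1)$-minors. Genericity means each tropical determinant has a unique maximizing permutation, and the sign is then determined by $\epsilon$ together with that permutation. Hence the chirotope is a function of the coherent matching field of type $(n,d+1)$ induced by the exponent matrix $W\in\R^{n\times(d+1)}$. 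This gives
\[\#\OTT(n,d)\le \#\{\text{sign data}\}\cdot\#\CMFT(n,d+1).\]
The naive factor $2^{(d+1)n}$ for the signs can be reduced to $2^{dn}$ by projective normalization: we multiply each row by $\pm1$ and use the global symmetry, which changes the chirotope in a controlled, bijective way. Plugging in \cref{thm:main-unsigned}\ref{item:cmf} with $r=d+1$ gives $2^{dn+\Oh(1)}(d!)^{-(d+1)n}n^{d^2n}$, which is the stated upper bound.

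\emph{Lower bound.} We restrict to the positive orthant, where all coordinates and apex data have sign $+$. There the signed tropical picture should specialize to the unsigned max-plus one. The plan is to embed $\CTT(n,d)$ injectively, or at least with $2^{\Oh(n)}$-to-one multiplicity, into $\OTT(n,d)$. Concretely, we choose hyperplane sequences whose signed vectors have a fixed sign pattern, say all entries positive except one coordinate carrying $\ominus$. The signed hyperplane then coincides, on the positive orthant, with the max-tropical hyperplane $\Hmax(u)$. The signed chirotope, through the signs of minors, then records exactly which permutation attains each tropical minor. Since the matching field of a generic configuration determines its combinatorial type, distinct combinatorial types yield distinct chirotopes. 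Using the lower bound of \cref{thm:main-unsigned}\ref{item:ct} then gives $\#\OTT(n,d)\ge 2^{-\Oh(n)}(\log_2 n)^{-d^2n}n^{d^2n}$.

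\emph{Main obstacle.} The hardest step is the injectivity in the lower bound: the sign of a signed tropical determinant only reveals the parity-weighted sign of the optimal matching, not the matching itself. My first approach is to choose the sign pattern cleverly, e.g.\ signs $\epsilon_{ik}$ depending on $k$ so that distinct matchings with different supports yield different sign vectors across the minors. Failing that, I would work directly with the explicit families constructed for the lower bound in \cref{thm:main-unsigned} and re-prove the needed separation for signed minors, perturbing the signs of a few auxiliary points so that they distinguish the relevant sector memberships. Losing a $2^{\Oh(n)}$ factor is harmless here.
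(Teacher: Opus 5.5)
Your upper bound is essentially the paper's argument (\cref{prop:ot-from-cmf-signs} combined with \cref{cor:cmf-log-upper-bound}), with two small fixes. Signed non-singularity does not give a unique maximizing permutation; it only forces all terms of maximal modulus to share a sign. So you must first perturb $|\bar Q|$ to an unsigned-generic matrix that preserves all strict inequalities, whose unique maximizers are then among the old ones. Also, the factor $2^{dn}$ is immediate because the homogenizing column is $\tone$, so no ``projective normalization'' is needed.

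The lower bound, however, has a genuine gap. The geometric claim fails for $d\ge2$. On a sign chart, $\Hsmax(w)$ keeps only the facets $\Fmax_{ab}(|w|)$ whose two terms have opposite signs. With a single $\ominus$ coordinate you therefore see only the boundary of one sector, and no sign pattern can keep all $\binom{d+1}{2}$ facets once $d+1\ge3$. Correspondingly, for a fixed sign pattern each value $\chi_{\q}(I)=\sgn(M_I)\prod_{(i,k)\in M_I}\varepsilon_{i,k}$ is one bit, while $M_I$ ranges over $(d+1)!$ matchings. So the chirotope does not ``record exactly which permutation attains each tropical minor,'' and the conclusion that distinct combinatorial types give distinct chirotopes is unsupported. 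Showing that coherence still makes $\Lambda\mapsto\chi$ injective up to a factor $2^{\Oh(n)}$ would be the whole content of the lower bound. Your proposal leaves this open, and the fallback is too unspecified to fill it.

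The paper never passes through $\CTT(n,d)$; it reruns the batched-extension argument directly in the signed setting.
\begin{enumerate}
\item For a base sequence $\q$ of length $t$, it uses the signed auxiliary arrangement $\mathcal B^{\pm}(\q)$ of signed max-tropical hyperplanes spanned by $d$-subsets. The sign vector of an extension point $v^\ell$ relative to it consists literally of the chirotope values $\chi_{\v}(i_1,\ldots,i_d,t+\ell)$. Hence distinct sign-vector sequences give distinct chirotopes (\cref{lem:signed-extension}), and no matching information is ever needed.
\item The real work is \cref{prop:signed-local-product-grid}. The base points lie in the positive chart $\mathcal C_1$, near reflected copies of the unsigned configuration. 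The window $\mathcal W$ lies in the chart $\mathcal C_2$ of sign $((-1)^{d+1},\ldots,(-1)^{d+1})$.
\item The spanned hyperplane has apex $w_{i_s}=((-1)^{d-s},u_{i_s})$. Its signs alternate along the path $j=i_0,\ldots,i_d=r$, so each spanning point lies on a retained facet. In $\mathcal W$ only the terms indexed by $j$ and $r$ can be maximal, and they have opposite signs. Hence the hyperplane meets $\mathcal W$ exactly in the wall $|x_j|=c_j(\q)$.
\item This yields $(m^d+1)^d=\Omega(t^{d^2})$ sign vectors. Taking $t=\lfloor n/\log_2 n\rfloor$ and using \cref{lem:signed-generic-extension-existence}, one obtains $(c_d\,t^{d^2})^{n-t}$ distinct chirotopes.
\end{enumerate}
The two ideas you are missing are to measure extensions by sign vectors relative to spanned signed hyperplanes rather than by matchings, and to choose the sign charts of the base points and of the window so that these hyperplanes become parallel walls through a common window.
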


\noindent
As we spell out in Section~\ref{sec:signed-real-limits}, every tropical labeled order type corresponds to a real labeled order type, in the sense that they share the same chirotope. The number of labeled order types in $\R^d$ is known to be $2^{d^2 n \log_2 n \pm O(n)}$ (see below), so Theorem~\ref{thm:main-signed} reveals that the numbers of real and tropical labeled order types coincide at first order. (The ratio could still be exponential, though!) 

\medskip

The connection between real and tropical labeled order types proceeds via parametric families of real point sequences whose coordinates are of the form $\pm t^a$, where $t$ is a real parameter and $a$ a real number. As $t \to \infty$, the chirotope of this parameterized sequence eventually stabilizes (see \cref{fig:signed-puiseux-to-tropical}); the limit chirotope corresponds to the chirotope of the tropical points given by the exponents $a$ and the signs of the monomials. The same connection relates mean-payoff games, or equivalently tropical linear programs~\cite{AkianGaubertGuterman2012}, to linear programs over non-archimedean fields~\cite{Allamigeon2015}.

\paragraph{Parity coherent matching fields.}

A remarkable subclass of tropical labeled order types arises from a ``double tropicalization'' construction, taking a deformation limit of tropical numbers. This deformation is key to the analysis of parity games. Following this, we call parity coherent matching fields and parity chirotopes the special coherent matching fields and chirotopes arising in this deformation limit (see Section~\ref{sec:parity} for a formal definition). Two point sequences over these numbers {\em have the same parity labeled order type} if and only if they have the same parity chirotope. We write $\CMFP(n,r)$ for the set of parity coherent matching fields of type $(n,r)$ and $\OTP(n,d)$ for the set of parity labeled order types of labeled $n$-point sequences in dimension $d$.

\begin{theorem}\label{thm:main-parity}
  As $n\to\infty$ we have:
  \begin{itemize}
  \item[(i)] For every fixed $r\geq 2$, $\#\CMFP(n,r) = \phi_{\P}(n,r)\, n^{(r-1)n}$, where $2^{-\Oh(n)} \le \phi_{\P}(n,r) \le 2^{\Oh(n)}$.
  \item[(ii)]For every fixed $d\geq 1$, $\#\OTP(n,d) = \eta_{\Ppm}(n,d)\, n^{dn}$, where $2^{-\Oh(n)} \le \eta_{\Ppm}(n,d) \le 2^{\Oh(n)}$.
  \end{itemize}
\end{theorem}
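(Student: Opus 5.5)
We have not yet seen the formal definition of the parity deformation (Section~\ref{sec:parity}), so we work with the natural model. Parity numbers are tropical numbers $a = \pm\, 2^{\,p}$, or $\pm N^{p}$ with $N\to\infty$, whose exponents $p$ (the priorities) are generic integers. Equivalently, a parity matrix $W\in\T^{n\times r}$ has entries $W_{i,k}=N^{p_{ik}}$ with distinct priorities $p_{ik}$, taken in the large-base limit. The key structural fact we will use is that, in this limit, comparing sums of entries becomes lexicographic. For two perfect matchings $M,M'$ of $K_{I,[r]}$, the weight comparison $\sum_{M} W > \sum_{M'} W$ is decided by the largest priority in the symmetric difference $M\triangle M'$. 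Hence the optimal matching $M_I$ is the output of a greedy procedure: take the edge of $K_{I,[r]}$ of largest priority, delete its row and column, and repeat.

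\textbf{Upper bound.} The coherent matching field is therefore a function of the relative order of the $nr$ priorities, but only through restricted information, and we aim to encode it with $n^{(r-1)n}2^{\Oh(n)}$ bits of data. For each column $k$, consider the order on rows induced by $p_{\cdot k}$. The greedy output on every $I$ is determined by the interleaving of these $r$ column orders. First, assume without loss of generality that column $1$ has the globally smallest priorities in each row; this is plausible because of the tropical scaling symmetry, i.e.\ row and column rescalings do not change the field. Then, for each row $i$, we record the ``rank'' of $i$ in each of the other $r-1$ columns relative to a single merged order. This gives at most $(n^{\Oh(1)})^{\ldots}$ choices; more carefully, we encode the field by $r-1$ permutations of $[n]$ together with $\Oh(n)$ bits of interleaving data. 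That yields $(n!)^{r-1}2^{\Oh(n)} = n^{(r-1)n}2^{\Oh(n)}$.

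The step I expect to be hardest is justifying that the interleaving between different columns contributes only $2^{\Oh(n)}$ rather than $n^{\Theta(n)}$ choices. My plan is to show that only the relative order of each row's $r$ priorities, which gives $(r!)^n$ choices, plus the per-column orders matters for the greedy outcome. To do this, I would prove by induction on $r$ that $M_I$ is determined by these data, using the fact that the greedy first pick on $I$ is the maximum over columns of the column-wise maxima.

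\textbf{Lower bound.} We construct $n^{(r-1)n}2^{-\Oh(n)}$ distinct fields. Fix column $r$ to have dominant priorities increasing along the rows, and for columns $1,\dots,r-1$ choose arbitrary independent permutations $\sigma_k$ giving the row orders. We then show that $\sigma_k$ can be recovered from the field by looking at $r$-subsets $I$ consisting of two test rows $i,j$ together with fixed ``anchor'' rows. The matching on such $I$ reveals whether $\sigma_k(i)<\sigma_k(j)$. This recovers all $(n!)^{r-1}$ choices up to the $2^{\Oh(n)}$ loss incurred by the anchors.

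\textbf{Part (ii).} This follows the same pattern via duality between point configurations and signed hyperplane sequences, as in the proof of Theorem~\ref{thm:main-signed}. A parity chirotope is determined by the parity coherent matching field of the $(d+1)$-column matrix, with one homogenizing column, together with the $2^{n(d+1)}$ signs. Part (i) with $r=d+1$ then gives the upper bound $n^{dn}2^{\Oh(n)}$. For the lower bound, we realize the construction above with positive signs and check that distinct fields yield distinct chirotopes: since all the signs are positive, the sign of each tropical determinant records precisely the parity of the optimal matching, i.e.\ of the permutation, and the test subsets above distinguish it.
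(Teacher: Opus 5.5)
\textbf{Upper bound in (i).} There is a genuine gap here, and it sits at the reduction from $r$ to $r-1$ columns. Your ``without loss of generality, column~$1$ carries the smallest priority of every row'' is not justified. Adding a Laurent polynomial to a row does preserve the matching field, but the result is no longer a parity matrix (monomial entries with pairwise distinct priorities). Nor can you permute columns row by row.

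Without this reduction, recording the row orders and all $r$ column orders gives $(r!)^n(n!)^r$ choices. That is of order $n^{rn}$, too large by a factor of about $n^n$. (That these data determine the greedy output is in fact true for positive parities, since the greedy matching is the unique stable matching for these preferences, but it does not help.) The interleaving you single out as the hard step is harmless: merging $r-1$ orders of length $n$ costs a multinomial coefficient at most $(r-1)^{(r-1)n}=2^{\Oh(n)}$, so you can simply count all $((r-1)n)!$ total orders.

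The missing idea is that the lowest-priority entry of a row is never decisive. If $(i,k)$ has the largest priority in $M\mathbin{\triangle}M'$ for distinct $M,M'\in\mathcal M_I$, then row $i$ contributes a second edge $(i,k')$ to $M\mathbin{\triangle}M'$, and it has smaller priority. Hence $\Lambda(A)$ is determined by the column of the minimum in each row ($r^n$ choices), together with the priority order and the parities of the remaining $(r-1)n$ entries. This gives $r^n((r-1)n)!\,2^{(r-1)n}=n^{(r-1)n}2^{\Oh(n)}$.

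Your greedy rule also silently assumes that all parities are $+1$. With $a_{i,k}=\eta_{i,k}s^{w_{i,k}}$ and independent $\eta_{i,k}$, the decisive edge favours the matching containing it only when its parity is $+1$. So greedy is false in general, and the parities must be part of the encoding, as above.

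\textbf{Lower bound and part (ii).} The lower bound, as written, also fails. If column $r$ has dominant priorities increasing along the rows, then (all parities being $+1$) the optimal matching on every $I$ contains $(\max I,r)$. So column $r$ is awarded by row index, independently of the $\sigma_k$. For $r=2$, where there are no anchors, $M_{\{i,j\}}=\{(i,1),(j,2)\}$ for all $i<j$, which is a single field.

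You need the opposite orientation, as in the paper. Column $r$ must be \emph{dominated}: its test-row entries lie below all test-row entries of columns $1,\ldots,r-1$. The first $r-1$ rows serve as anchors whose diagonal entries $(j,j)$ have the top priorities. Then on $I^k_{a,b}=([r-1]\setminus\{k\})\cup\{r-1+a,r-1+b\}$ the anchors take their diagonal, the two test rows share columns $k$ and $r$, and the outcome is decided by $\sigma_k(a)$ versus $\sigma_k(b)$. This gives $((n-r+1)!)^{r-1}$ fields.

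This orientation is forced in (ii) anyway, since the homogenizing column is the constant $\tone$ (modulus $0$) and can only play the dominated role. The two competing matchings differ by the transposition of columns $k$ and $d+1$, so $\chi_{\q}(I)$ reveals the comparison, as you intend.

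For the upper bound in (ii), $|\bar Q|$ is not a parity matrix (its last column is constant), so (i) does not apply verbatim. However, the homogenizing entries are never decisive. So the priority order, the parities and the outer signs of the $dn$ affine coordinates determine $\chi_{\q}$, which gives $(dn)!\,4^{dn}$ directly.
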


\noindent
Although tropicalization preserves the first-order growth of the number of real labeled order types (Theorem~\ref{thm:main-signed}), Theorem~\ref{thm:main-parity} shows that double tropicalization causes the numbers of coherent matching fields and labeled order types to collapse, for $r\geq 3$, respectively~$d\geq 2$.

\subsection{Context and related work}\label{sec:related-work}

Let us comment on how our work relates to various lines of research, then compare our techniques to the classical methods used to count combinatorial structures.

\paragraph{Oriented matroids and their relatives.}

Associating a combinatorial type to a geometric object is a classical idea, and oriented matroids are an important example. The development of oriented matroid theory started in the 1970's in the work of Bland~\cite{Bland1977PivotRules,Bland1977Abstraction}, of Folkman and Lawrence~\cite{FolkmanLawrence1978}, and of Las Vergnas~\cite{LasVergnas1975Matroides,LasVergnas1980Convexity}. They were rediscovered several times over, for instance as the {\em (labeled) order types} of Goodman and Pollack~\cite{GoodmanPollack1983Multidimensional} for higher-dimensional sorting, or as the CC-systems of Knuth~\cite{Knuth1992Axioms} for convex hull algorithms. The {\em topological representation theorem} of Folkman and Lawrence~\cite{FolkmanLawrence1978} asserts that every uniform oriented matroids of rank $r$ can be realized as a signed arrangement of pseudospheres in $\mathbb{S}^{r-1}$; for rank $3$ these are the {\em pseudoline arrangements}. The oriented matroids arising from point configurations or from hyperplane configurations in $\R^d$ are called {\em realizable}; they have {\em rank} $d+1$ and when the points or hyperplanes are in general position they are called {\em uniform}. The labeled order types of points in $\R^d$ are actually the realizable acyclic oriented matroids of rank $d+1$. All uniform oriented matroid of rank~$3$ with up to $8$ elements are realizable~\cite[Theorem~5.3.2]{felsner2017pseudoline}.

The oriented matroids arising from tropical point configurations or from hyperplane configurations in signed tropical spaces are called {\em tropically realizable}. An early exploration of tropical oriented matroids appeared in~\cite[Chap.~9]{Skomra2018}, in which it is shown that all uniform oriented matroids of rank $3$ with up to $7$ elements are tropically realizable. This was subsequently refined in~\cite{delambilly}, showing that the same is true up to $8$ elements, and that the corresponding order types are in fact parity realizable. An efficient enumeration algorithms for coherent matching fields has been developed in~\cite{AllamigeonChen2026}; as a by-product, it allows one to count tropical order types in rank $3$ up to $9$ elements and shows that some rank 3 with 9 elements classical realizable order types are not tropically realizable. We note that an alternative combinatorial approach to mean-payoff games has
been recently developed in~\cite{LohoMaatSkomra2025}, using a notion
of cycle patterns instead of order types.

\paragraph{Counting geometric structures.}

The problem of counting combinatorial types of geometric objects has a rich history, see for example~\cite{alon1986number} and the discussion therein for an account of the early history of the problem of counting face lattices of polytopes. The number of uniform oriented matroids of rank $d+1$ is not known, not even at first order. For rank~$3$, although the number is known to be~$2^{\Theta(n^2)}$ the precise asymptotics is not known: the best known lower bound is $2^{0.2721\,n^2}$~\cite{CortesKuhnastDallantFelsnerScheucher2024Pseudolines} and the best known upper bound is $2^{0.6496\,n^2}$~\cite{Dallant2025Zone}. The number of uniform {\em realizable} oriented matroids of rank $d+1$ is known to be $2^{d^2 n \log n \pm O(n)}$ (see below); in particular, already at first order one can see that an overwhelming majority of the oriented matroids are not realizable. Theorem~\ref{thm:main-signed} reveals that the contrast is not so sharp between realizable and tropically realizable oriented matroids.

\paragraph{Complications arising from realization spaces.}

Every combinatorial object that records geometric properties has an associated {\em realization space}, namely the set of all these geometric configurations giving rise to that combinatorial object. An important observation is that any given geometric realization may only give access to a fraction of the extensions of the combinatorial structure. For instance, adding one element to the labeled order type of $n$ points in convex position in the plane can produce $\Omega\pth{2^n/n}$ distinct labeled order types~\cite{pilz2020crossing} (the so-called \emph{wheelsets}); in contrast, adding one point to a fixed set of $n$ points in convex position can only produce point sequences with $O(n^4)$ different labeled order types (see Appendix~\ref{a:mistake-GP}). For tropically realizable oriented matroids, this problem arises as well.

A landmark result in realizable oriented matroid theory is Mnëv's universality theorem~\cite{Mnev1988Universality}, which asserts that already in the plane the set of point sequences that give rise to a given oriented matroid may be arbitrarily complicated. In computational term, Mnëv's arguments position the {\sc Membership problem} of deciding whether a given oriented matroid is realizable as an $\exists \R$-complete problem. In particular, it is unknown whether this  {\sc Membership problem} is in NP.\footnote{This may appear surprising at first glance: why not certify membership by providing a sequence of points realizing it? The catch is that although every uniform realizable oriented matroid is realizable with rational, and therefore integer, coordinates, some require Cartesian coordinates of order $2^{2^{\Omega(n)}}$ where $n$ is the number of points~\cite{GoodmanPollackSturmfels1990Spread}.} Note that a sampling algorithm, which turns a random bitstring into a random realizable oriented matroid, is akin to a verifier (with the bitstring acting as certificate); there is therefore no known algorithm to produce in time polynomial in $n$ a random realizable oriented matroid of size $n$ with full support (meaning that every realizable oriented matroid of size $n$ has nonzero probabilty), regardless of the bias!
\footnote{In particular, reading off the oriented matroid of a random point set from a standard measure (Gaussian or uniform on a square or disk) is known to produce asymptotically singular distributions at the level of oriented matroids~\cite[Theorem~1.1]{goaoc2023convex}.}

Regarding realization spaces, the tropical picture stands in sharp contrast. 
In particular, in the tropical case, the set of tropical realizations of a given oriented matroid
has a semi-linear structure. \cite{brandt2018incidence} provides a tropical variant of the universality theorem, in which semi-linear sets play the role of semi-algebraic sets.

\paragraph{Techniques for counting labeled order types.}

When it comes to determining the asymptotic number of real labeled order types, the techniques used are essentially of two types.

{\em Upper bounds} were obtained by recasting the space of $n$-point sequences in $\R^d$ to $\R^{dn}$, partitioning that space by the hypersurfaces corresponding to configurations where some $d+1$ points lie on a common hyperplane, and bounding from above the number of chambers. This last step can be achieved, with some effort~\cite{alon1986number}, using the Oleinik-Petrovsky-Milnor-Thom bound on the Betti numbers of real algebraic varieties~\cite{Milnor1964} or more directly using Warren's topological counting argument~\cite[see ``added in print'']{GoodmanPollack1986}; see also~\cite[$\mathsection 5$]{Alon1995}. This approach readily generalizes to any discrete structure defined by semi-algebraic relations, and plays for instance a role in sharpening the bounds on the Zarankiewicz problem for semi-algebraic graphs~\cite{fox2017semi}. For our purpose, these techniques give, for $r \ge 4$, worse dependency in $r$ than the upper bounds of Theorems~\ref{thm:main-unsigned} and~\ref{thm:main-signed} (we spell this out in Appendix~\ref{app:upper-bound-comparison}). Our proofs of upper bounds are of a completely different nature: we leverage the connections between tropically realizable oriented matroids, coherent matching fields, and integer points of certain transportation polytopes. 
 
{\em Lower bounds} were established by constructing many point configurations with pairwise distinct oriented matroids. This is made possible by observing that given a point sequence $P=(p^1, p^2, \ldots, p^n)$, the oriented matroid of an extension $(p^1, p^2, \ldots, p^n, p^{n+1})$ is determined by the cell that contains $p^{n+1}$ in the arrangement of the $\binom{n}{d+1}$ hyperplanes spanned by $P$. For points of $\R^d$ in general position, the number of such cells does not depend on the position of the points, Goodman and Pollack~\cite{GoodmanPollack1986} could analyze the inductive process of repeatedly adding one point at a time and prove that the upper bound obtained from Warren's theorem is tight at first order.\footnote{We note that the precise formula given by Goodman and Pollack for the number of cells in the arrangement induced by $n$ points in $\R^d$ evaluates to a negative value for $n=12$ and $d=10$. The asymptotic order of magnitude of this number of cells is, however, correct so only lower order terms are affected. See Appendix~\ref{a:mistake-GP}.} An alternative approach is to ``batch'' the insertions: argue that for arbitrary large number of $t$ there exist {\em one} configuration of size $t$ with many cells and then augments its by $n-t$ points; this was used by Sauermann~\cite{Sauermann2021Speed} to prove that Warren's bound is also tight at first order for generic semi-algebraic graphs. Scheucher~\cite{SCHEUCHER2023101924} also used this approach to obtain lower bounds on the number of labeled order types realizable on integer grids of polynomial size. In the tropical setting, the number of cells in the arrangement of the hyperplanes spanned by $n$ generic points depends on the position of the points, so that it is difficult to control a sequence of induction steps adding $1$ point each. We therefore follow the ``batched'' approach, and note that the construction of an initial configuration with many cells proves more challenging in the tropical setting.

\paragraph{Tropical geometry.}

The present work should be put in the broader perspective of real
tropicalization.  The tropical labeled order types considered here
constitute special cases of the matroids over
hyperfields studied by Baker and Bowler~\cite{BakerBowler2019} as an
extension of the valuated matroids introduced by Dress and
Wentzell~\cite{DressWenzel1992}, they are also special cases of the
matroids over systems considered
in~\cite[\S~8]{AkianGaubertRowen2024}.

The discrepancy between the classical and tropical labeled
order types is reminiscent of phenomena observed in Viro's
patchworking method~\cite{Viro1983,Viro2001,Viro2006}. This method
allows one in particular to construct multivariate polynomial systems
with a number of positive solutions controlled by combinatorial
means. Bihan~\cite{Bihan2002} showed that for any system built by
patchworking (transverse tropical intersections), the number of
positive solutions cannot exceed a bound conjectured by
Kouchnirenko. Haas's counterexample~\cite{Haas2002} to Kouchnirenko's
conjecture entails that there are counts of solutions that cannot be
attained by combinatorial patchworking.  El Hilany showed
in~\cite{ElHilany2018} that the transversality condition is
responsible for the loss of solutions. The genericity assumption used
in our definition of tropical chirotopes plays a role similar to
transversality in combinatorial patchworking, and by analogy, one
would expect a gap between the number of ordinary chirotopes and the
number of tropical chirotopes.

\section{The upper bounds of Theorem~\ref{thm:main-unsigned}}\label{sec:upper-unsigned}

In this section we prove our upper bounds on $\# \CMFT(n,r)$ and $\# \CTT(n,d)$. We first prove the bound for $\# \CMFT(n,r)$ then propagate it to $\# \CTT(n,d)$. The analysis for $\# \CMFT(n,r)$ relies on a correspondence between coherent matching fields and Minkowski sums of Birkhoff polytopes. The propagation leverages a bijection due to Fink and Rincón.

\subsection{Minkowski sums of Birkhoff polytopes}

Let $r \ge 2$. Let $I \in \binom{[n]}{r}$. For every matching $M\in\mathcal M_I$ we let $E(M)\in\{0,1\}^{n\times r}$ denote its incidence matrix, that is $E(M)_{i,k}=1$ if and only if $(i,k)\in M$. Now, to any coherent matching field $\Lambda = (M_I)_{I \in \binom{[n]}{r}} \in \CMFT(n,r)$ we associate a point $v(\Lambda) = \sum_{I\in\binom{[n]}{r}} E(M_I) \in {\N_0}^{n\times r}$.

\begin{lemma}\label{lem:cmf-vertices}
  The map $\Lambda \in \CMFT(n,r) \mapsto v(\Lambda) \in (\N_0)^{n\times r}$ is injective.
\end{lemma}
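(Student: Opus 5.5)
The plan is to show that $v(\Lambda)$ is the unique maximizer of a linear functional over a Minkowski sum, so it determines each summand's maximizer. Let $\Lambda=(M_I)_I$ be coherent, with witness $W\in\T^{n\times r}$. First I reduce to finite weights: since strict inequalities are open conditions and only finitely many matchings are involved, I will choose $W$ in $\R^{n\times r}$. If an entry is $-\infty$, then a matching $M_I$ using it would have total weight $-\infty$, which cannot strictly exceed anything. So the entries used by the $M_I$ are finite, and replacing each $-\infty$ by a sufficiently negative real preserves all the strict inequalities.

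Next, for each $I$, let $B_I=\operatorname{conv}\{E(M):M\in\mathcal M_I\}$, a Birkhoff polytope embedded in $\R^{n\times r}$. Write $\langle W,X\rangle=\sum_{i,k}W_{i,k}X_{i,k}$. Then $\langle W,E(M)\rangle=\sum_{(i,k)\in M}W_{i,k}$, so coherence says exactly that $E(M_I)$ is the unique maximizer of $\langle W,\cdot\rangle$ over the vertices of $B_I$, hence over all of $B_I$. By the standard fact that faces of a Minkowski sum are sums of faces, $v(\Lambda)=\sum_I E(M_I)$ is the unique maximizer of $\langle W,\cdot\rangle$ on $P=\sum_I B_I$, and in particular it is a vertex of $P$.

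For injectivity, suppose $\Lambda'=(M'_I)_I$ is another coherent matching field with $v(\Lambda')=v(\Lambda)$. Since $\sum_I E(M'_I)$ lies in $P$ and attains the value
\[
\langle W,v(\Lambda)\rangle=\sum_I \max_{B_I}\langle W,\cdot\rangle,
\]
and each term satisfies $\langle W,E(M'_I)\rangle\le \max_{B_I}\langle W,\cdot\rangle$, every term must be an equality. Uniqueness of the maximizer in each $B_I$ then forces $E(M'_I)=E(M_I)$, so $M'_I=M_I$ for every $I$, because the incidence matrix determines the matching. Note that only the coherence of $\Lambda$ is needed here; the coherence of $\Lambda'$ plays no role.

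The only delicate point is the reduction from $\T$-valued to real weights, which is needed so that $\langle W,\cdot\rangle$ is a genuine linear functional. The rest is the elementary additivity of maxima over Minkowski sums.
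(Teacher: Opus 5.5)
Your proof is correct and follows the paper's argument: $E(M_I)$ is the unique maximizer of $\langle W,\cdot\rangle$ on each $B_I$, and summing the termwise inequalities $\langle W,E(M'_I)\rangle\le\max_{B_I}\langle W,\cdot\rangle$ forces equality in each term, hence $M'_I=M_I$. Your preliminary reduction to a witness $W$ with finite entries spells out a point the paper skips, since the paper treats $\langle W,\cdot\rangle$ as a linear functional even though $W\in\T^{n\times r}$ may have entries equal to $-\infty$.
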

\begin{proof}
  For $I \in \binom{[n]}{r}$ let $B_I \;\coloneqq\;
  \operatorname{conv}\{E(M) \colon M \in \mathcal M_I\}$ denote the
  copy of the Birkhoff polytope $B_r \subset \R^{r \times r}$ in the
  subspace $\R^{I \times r}$ of $\R^{n \times r}$. For a fixed $I$,
  maximizing over matchings is the same as optimizing a linear
  functional over $B_I$:
  \[
    \max_{M\in\mathcal M_I}\sum_{(i,k)\in M}W_{i,k}
    =
    \max_{v\in\mathrm{Vert}(B_I)}\langle W,v\rangle .
  \]
  Hence, if $W$ realizes $\Lambda=(M_I)_{I\in\binom{[n]}{r}}$, then $E(M_I)$ is the unique maximizer on $B_I$ for every $I$. Now suppose that $v= \sum_{I\in\binom{[n]}{r}} E(M_I) = \sum_{I\in\binom{[n]}{r}} E(M_I')$ for two distinct coherent matching fields $(M_I)_{I \in \binom{[n]}{r}}$ and $(M_I')_{I \in \binom{[n]}{r}}$. Then $\langle W,E(M'_I)\rangle$ is at most the maximum of $W$ on $B_I$ for every $I$. Equality after summing forces equality for every $I$, and hence  $M'_I=M_I$ by uniqueness of the maximizer on $B_I$. The map is therefore injective.
\end{proof}

\begin{remark}
  The map $v$ actually defines a bijection between $\CMFT(n,r)$ and the vertices of the Minkowski sum \(P_{n,r} \coloneqq \sum_{I\in\binom{[n]}{r}} B_I
  \subset\R^{n\times r}\) of all these copies of $B_r$. This is a special case of the bijection between the extreme points of a polytope and the maximal cones of its normal fan. See Sturmfels and Zelevinsky~\cite{SturmfelsZelevinski93}.
\end{remark}

We can now prove our first upper bound:

\begin{proposition}\label{prop:cmf-upper-bound}
  For $n \ge r \ge 2$, we have $\# \CMFT(n,r) \le \binom{\binom{n-1}{r-1}+r-1}{r-1}^{\!n}$.
\end{proposition}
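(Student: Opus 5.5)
By \cref{lem:cmf-vertices} it suffices to bound the number of possible values of $v(\Lambda)$, so we will bound the number of integer matrices $v\in(\N_0)^{n\times r}$ that arise as $\sum_{I} E(M_I)$ for some matching field, coherent or not. The plan is to describe the rows of such a $v$ and count the possibilities row by row.

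\emph{Row sums.} Fix a row $i\in[n]$. For each $I\in\binom{[n]}{r}$, the matrix $E(M_I)$ has row $i$ equal to a standard basis vector $e_k\in\R^r$ if $i\in I$, and equal to zero otherwise. Hence row $i$ of $v$ is a vector in $(\N_0)^r$ whose entries sum to the number of $r$-subsets containing $i$, namely $N\coloneqq\binom{n-1}{r-1}$. This sum does not depend on $i$.

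\emph{Counting.} The number of vectors in $(\N_0)^r$ with entry sum $N$ is the stars-and-bars count $\binom{N+r-1}{r-1}$. Since the rows of $v$ can be chosen independently when we only impose the row-sum constraint, the number of candidate matrices $v$ is at most $\binom{N+r-1}{r-1}^{n}$. Composing with the injection of \cref{lem:cmf-vertices} gives the claimed bound on $\#\CMFT(n,r)$.

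No step here is a real obstacle; the only point requiring care is to identify the correct row sum $\binom{n-1}{r-1}$. We deliberately discard the column-sum constraints. Each column sums to $\binom{n}{r}$, since every matching uses each $k\in[r]$ exactly once. Keeping these constraints, together with the sharper transportation-polytope structure of $P_{n,r}$, would only improve lower-order factors, and the row-by-row bound already yields the leading term $n^{(r-1)^2 n}$ up to the stated factors.
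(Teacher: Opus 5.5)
Your proof is correct and follows the paper's argument essentially verbatim: injectivity of $\Lambda\mapsto v(\Lambda)$ from \cref{lem:cmf-vertices}, the observation that every row of $v(\Lambda)$ sums to $\binom{n-1}{r-1}$, and a stars-and-bars count per row multiplied over the $n$ rows. The paper likewise notes the column sums $\binom{n}{r}$ but does not use them.
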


\noindent
The proof is elementary, bounding the number of coherent matching fields by
the number of integer points of a transportation polytope gives a bound which, surprisingly, is tight enough
for our purposes.
\begin{proof}
  Let $\Lambda \in \CMFT(n,r)$. Observe that $v(\Lambda)$ is the count matrix where for all $(j,k) \in [n] \times [r]$,
\[
v(\Lambda)_{j,k} \;=\;  \#\left\{I \in \binom{[n]}{r} \colon (j,k) \in M_I\right\},\]
with fixed row and column sums
\[
\text{for all } j\in[n], \quad \sum_{k\in[r]} v(\Lambda)_{j,k} \;=\; \binom{n-1}{r-1},
  \qquad \text{and for all } k\in[r], \quad
  \sum_{j\in[n]} v(\Lambda)_{j,k} \;=\; \binom{n}{r}.
\]
(Each $r$-subset containing $j$ contributes one edge to row $j$, and
each $r$-subset contributes one edge to column $k$.) Each row has
nonnegative entries with sum $\binom{n-1}{r-1}$, so it has at most
\[
  \binom{\binom{n-1}{r-1}+r-1}{r-1}
\]
possibilities, the number of compositions of $\binom{n-1}{r-1}$ into $r$ ordered nonnegative parts. Multiplying over all rows and using \cref{lem:cmf-vertices} gives
\[  \card{\CMFT(n,r)}
  \;\leq\;
  \binom{\binom{n-1}{r-1}+r-1}{r-1}^{\!n}\]
as announced.
\end{proof}

Here is a direct translation of the upper bound of Theorem~\ref{thm:main-unsigned}~(i):

\begin{corollary}\label{cor:cmf-log-upper-bound}
  As $n \to \infty$, for every fixed $r\geq 2$,
  \[ \log_2 \card{\CMFT(n,r)} \leq (r-1)^2\,n\log_2 n - rn\,\log_2((r-1)!) + \Oh(1).\]
\end{corollary}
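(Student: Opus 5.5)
We take logarithms in the bound of \cref{prop:cmf-upper-bound}, which gives
\[
  \log_2 \card{\CMFT(n,r)} \le n \log_2 \binom{N+r-1}{r-1},
  \qquad N \coloneqq \binom{n-1}{r-1}.
\]
The task is then to expand the right-hand side for fixed $r$ and $n\to\infty$, keeping careful track of the constant factors that produce the $-rn\log_2((r-1)!)$ term.

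\emph{Step 1.} Write $\binom{N+r-1}{r-1} = \frac{(N+1)(N+2)\cdots(N+r-1)}{(r-1)!} = \frac{N^{r-1}}{(r-1)!}\prod_{k=1}^{r-1}(1+k/N)$. Since $N\to\infty$, we have $\log_2\prod_k(1+k/N) = \Oh(1/N)$. Hence
\[
  \log_2\binom{N+r-1}{r-1} = (r-1)\log_2 N - \log_2((r-1)!) + \Oh(1/N).
\]

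\emph{Step 2.} In the same way, $N = \frac{(n-1)\cdots(n-r+1)}{(r-1)!} = \frac{n^{r-1}}{(r-1)!}\prod_{k=1}^{r-1}(1-k/n)$. This gives $\log_2 N = (r-1)\log_2 n - \log_2((r-1)!) - \Oh(1/n)$, and the correction term is nonpositive, so it can only help the upper bound. Substituting into Step 1 yields
\[
  \log_2\binom{N+r-1}{r-1} \le (r-1)^2\log_2 n - (r-1)\log_2((r-1)!) - \log_2((r-1)!) + \Oh(1/N),
\]
which simplifies to $(r-1)^2\log_2 n - r\log_2((r-1)!) + \Oh(1/N)$.

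\emph{Step 3.} Multiply by $n$. The error term becomes $n\cdot\Oh(1/N) = \Oh(n/n^{r-1})$, which is $\Oh(1)$ for $r\ge 2$ (it is exactly $\Oh(1)$ when $r=2$, where $N=n-1$). This gives the claimed bound.

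The only delicate point is the error bookkeeping. The $\Oh(1/n)$ error from Step 2 is multiplied by $(r-1)n$, so it would contribute $\Oh(1)$ after multiplying by $n$; but its sign is favorable, so it can simply be dropped. The $\Oh(1/N)$ term is the one that must be checked to stay bounded after multiplying by $n$, and it does since $N \ge n-1$.
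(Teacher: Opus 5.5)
Your proposal is correct and follows the paper's proof essentially line by line. You take $\log_2$ of the bound in \cref{prop:cmf-upper-bound}, expand $\log_2\binom{N+r-1}{r-1}$ and $\log_2 N$ for fixed $r$, and use $n/N=\Oh(1)$ to absorb the error after multiplying by $n$; your explicit product expansions and your remark that the $-\Oh(1/n)$ correction has a favorable sign make the paper's terse error bookkeeping more transparent.
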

\begin{proof}
  Let us write $N_0\coloneqq\binom{n-1}{r-1}$. Statement~(i) is merely a reformulation of Proposition~\ref{prop:cmf-upper-bound}, which asserts that
  \[
    \log_2 \card{\CMFT(n,r)}
    \;\leq\;
    n\log_2\binom{N_0+r-1}{r-1}.
  \]
  Since $r$ is fixed,
  \[
    \log_2\binom{N_0+r-1}{r-1}
    = (r-1)\log_2 N_0-\log_2((r-1)!)+\Oh(N_0^{-1}),
  \]
  while
  \[
    \log_2 N_0
    = (r-1)\log_2 n-\log_2((r-1)!)-\Oh(1/n).
  \]
  Multiplying by $n$ gives the claimed bound; here
  $nN_0^{-1}=\Oh(1)$ for every fixed $r\geq2$.
\end{proof}

\subsection{Pointed coherent matching fields and max-tropical hyperplanes in general position} \label{sec:pointed-cmf}

To propagate the upper bound to $\CTT(n,d)$, some terminology is in order.

\paragraph{Sectors and combinatorial types.}

Let \(u\in\TPfin^d\simeq\R^d\), and use homogeneous coordinates for \(u\) and \(x\). For each \(i\in[d+1]\), the \(i\)-th closed \emph{sector} of \(\Hmax(u)\) is
\[
  \Smax_i(u)\coloneqq
  \left\{x\in\TPfin^d \colon x_i-u_i\geq x_j-u_j
  \text{ for all }j\in[d+1]\setminus\{i\}\right\}.
\]
In \cref{fig:intro-hyperplane-sequence}(a), \(\Smax_i(u)\) is the region labelled ``sector \(i\)''.

Given a point sequence \(\u=(u^1,\ldots,u^n)\) in \(\TPfin^d\), write
\(
  \Amax(\u)\coloneqq(\Hmax(u^1),\ldots,\Hmax(u^n))
\)
for the associated sequence of max-tropical hyperplanes. The type of a point \(x\in\TPfin^d\) with respect to \(\A=\Amax(\u)\)
is the \((d+1)\)-tuple of subsets of \([n]\) given by
\(T^{\A}(x)=(T^{\A}_1(x),\ldots,T^{\A}_{d+1}(x))\), where for each \(j\in[d+1]\),
\[
  T^{\A}_j(x)\coloneqq
  \left\{k\in[n] \colon x_j-u_j^k=\max_{i\in[d+1]}(x_i-u_i^k)\right\}.
\]
The \emph{labeled combinatorial type} of \(\A\) is the set of types it realizes:
\[
  \operatorname{type}(\A)\coloneqq
  \left\{T^{\A}(x)\colon x\in\TPfin^d\right\}.
\]
Two hyperplane sequences are \emph{combinatorially equivalent} if they have
the same labeled combinatorial type.
Recall that $\CTT(n,d)$ denotes the set of labeled combinatorial
types of sequences of $n$ max-tropical hyperplanes in general
position in $\TPfin^d$. The general-position condition is
defined below.

\paragraph{Generic tropical matrices.}

Recall that $\Sym_q$ denotes the set of permutations of $[q]$. For a square tropical matrix \(A\in\T^{q\times q}\), the \emph{tropical permanent} is $\tper(A)  \coloneqq \bigoplus_{\sigma\in\Sym_q}\bigodot_{i=1}^q A_{i,\sigma(i)} = \max_{\sigma\in\Sym_q}\sum_{i=1}^q A_{i,\sigma(i)}$. We say that \(A\) is \emph{non-singular} if the maximum defining \(\tper(A)\) is finite and attained by exactly one permutation \(\sigma\in\Sym_q\). A matrix \(U\in\T^{n\times r}\) with \(n\geq r\) is \emph{generic} if every maximal square submatrix, that is, every \(r\times r\) submatrix, is non-singular. As an example, the \emph{tropical identity matrix} \(I_r\in\T^{r\times r}\), with \(\tone\) on the diagonal and \(\tzero\) off it, is non-singular, since the maximum defining \(\tper(I_r)\) is attained only by the identity permutation.

\paragraph{Pointed augmentation.}

The \emph{pointed augmentation} of a matrix \(B\in\T^{n\times r}\) is the matrix $\widehat B \in\T^{(n+r)\times r}$ defined by
\[
  \widehat B\coloneqq
  \begin{pmatrix} I_r\\ B\end{pmatrix}.
\]
Note that \(\widehat B\) is generic if and only if every square submatrix of \(B\) is
non-singular.\footnote{Indeed, for \(K\subset[n]\) and \(J\subset[r]\) of equal cardinality \(j\), write \(B_{K,J}\) for the corresponding \(j\times j\) submatrix of \(B\), and write \(\widehat{B}_{K,J}\) for the \(r\times r\) submatrix of \(\widehat B\) that uses the rows of \(B\) indexed by \(K\) together with the rows of \(I_r\) indexed by \([r]\setminus J\). Each of the latter has its unique finite entry in a column of \([r]\setminus J\), so any permutation with finite permanent must match that row there. The remaining columns \(J\) are then matched with the rows indexed by \(K\), so \(\widehat{B}_{K,J}\) is non-singular if and only if \(B_{K,J}\) is.}

\paragraph{Pointed coherent matching fields.}

The propagation of the upper bound to $\CTT(n,d)$ goes through a special kind of coherent matching fields. A coherent matching field $\Lambda\in\CMFT(n,r)$ is \emph{pointed} if  for every $j\in[r]$ and every $I\in\binom{[n]}{r}$ with $j\in I$, the diagonal edge $(j,j)$ lies in $M_I$. Equivalently, $\Lambda$ is pointed if and only if it can be realized by a matrix whose top $r$ rows form the tropical identity $I_r$, that is, if it can be realized by the pointed augmentation matrix $\widehat B$ of some $B\in\T^{(n-r)\times r}$. We write $\CMFTbullet(n,r)$ for the set of pointed coherent matching fields of type $(n,r)$.
 
The following lemma allows us to transfer counting bounds between ordinary
and pointed coherent matching fields at the cost of adding only \(r\) rows.

\begin{lemma}[Inclusion chain]\label{lem:cmf-inclusion}
  For all $n\geq r\geq 1$,
  \[
    \card{\CMFTbullet(n,r)}
    \;\leq\;
    \card{\CMFT(n,r)}
    \;\leq\;
    \card{\CMFTbullet(n+r,r)}.
  \]
\end{lemma}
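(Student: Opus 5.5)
The first inequality is immediate, since $\CMFTbullet(n,r)\subseteq\CMFT(n,r)$ by definition: a pointed coherent matching field is a coherent matching field with an extra condition. For the second inequality we construct an injective map $\CMFT(n,r)\to\CMFTbullet(n+r,r)$ by prepending the tropical identity to a realizing matrix. Given $\Lambda\in\CMFT(n,r)$, choose a realizing matrix $W\in\T^{n\times r}$ and form the pointed augmentation $\widehat W\in\T^{(n+r)\times r}$. Index its rows so that the first $r$ rows are the identity rows and rows $r+1,\dots,r+n$ are the rows of $W$.

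The first task is to check that $\widehat W$ realizes some matching field $\widehat\Lambda$, which requires every $r\times r$ submatrix of $\widehat W$ to be non-singular. By the footnote after the definition of pointed augmentation, this amounts to every square submatrix $W_{K,J}$ being non-singular. Coherence of $\Lambda$ only guarantees this for the maximal $r\times r$ minors of $W$, so this is the main obstacle. I would remove it by perturbation. The set of realizing matrices of $\Lambda$ is defined by finitely many strict linear inequalities among entries in $\R\cup\{-\infty\}$. I would first replace $-\infty$ entries by very negative finite values, which keeps the strict inequalities, so the realizing set contains a nonempty open subset of $\R^{n\times r}$. Singularity of a square submatrix $W_{K,J}$ means two distinct permutations tie. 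Any such tie is a nontrivial linear equation in the entries, since distinct permutations use different entry sets. So generic $W$ in that open set has all square submatrices non-singular, and we pick such a $W$.

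With this $W$, $\widehat W$ is generic and hence realizes a coherent matching field $\widehat\Lambda$ of type $(n+r,r)$. It is pointed: by the stated equivalence, being realized by a pointed augmentation is exactly pointedness. Alternatively one checks directly that in any $I\ni j$ with $j\le r$, the unique finite entry of identity row $j$ forces the edge $(j,j)$.

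Next we verify that $\Lambda\mapsto\widehat\Lambda$ is well defined and injective. For $I\subseteq\{r+1,\dots,r+n\}$ of size $r$, the submatrix $\widehat W_I$ is exactly $W_{I-r}$. Hence $\widehat\Lambda$ restricted to such $I$ recovers $M_{I-r}$, the matching of $\Lambda$, after shifting indices. Therefore $\Lambda$ is determined by $\widehat\Lambda$, which makes the map injective, and independence of the choice of realizing $W$ is not needed. This gives $\card{\CMFT(n,r)}\le\card{\CMFTbullet(n+r,r)}$.
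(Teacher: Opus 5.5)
Your proof is correct and follows the paper's route: perturb a realizing matrix so that every square submatrix is non-singular, take its pointed augmentation to obtain a pointed coherent matching field of type $(n+r,r)$, and recover $\Lambda$ by restricting to the last $n$ rows. Your preliminary step of replacing $-\infty$ entries by very negative finite values, so that the realizing set contains an open subset of $\R^{n\times r}$, is a useful detail that the paper leaves implicit.
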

\begin{proof}
  The first inequality is immediate from $\CMFTbullet(n,r)\subset\CMFT(n,r)$.
  For the second, let \(\Lambda\in\CMFT(n,r)\) and choose a matrix \(B\)
  realizing it. Then every \(r\times r\) submatrix of \(B\) is non-singular,
  but smaller square submatrices may be singular. Consequently, \(\widehat B\)
  need not be generic and need not realize a pointed coherent matching field.
  Since the realization conditions are finitely many strict inequalities, we
  may choose \(B\), after a sufficiently small generic perturbation, so that it
  still realizes \(\Lambda\) and every square submatrix of \(B\) is
  non-singular. Then \(\widehat B\) is generic and realizes a pointed coherent
  matching field
  \(\widehat\Lambda\in\CMFTbullet(n+r,r)\). Fix one such choice for each
  \(\Lambda\). Restricting \(\widehat\Lambda\) to the last \(n\) rows and
  shifting their indices by \(i\mapsto i-r\) recovers \(\Lambda\); hence
  \(\Lambda\mapsto\widehat\Lambda\) is injective.
\end{proof}

\paragraph{General position for points and max-tropical hyperplanes.}

Let $\u = (u^1,\ldots,u^n)$ be a point sequence in $\TPfin^d$, and put $r\coloneqq d+1$.
A matrix $U \in \R^{n\times r}$ is a \emph{matrix of} $\u$ if, for every $i\in [n]$, the $i$th row of $U$ represents $u^i$.
We say that $\u$ is \emph{in max-general position} if $\widehat{-U}$ is generic, where $U$ is a matrix of $\u$.
This condition depends neither on the choice of row representatives nor on the order of the points.
We say that a sequence of max-tropical hyperplanes is in \emph{general position} if the sequence of its apices is in max-general position.

\paragraph{Wrapping up.}

We can now complete the proof of the upper bounds of Theorem~\ref{thm:main-unsigned}:

\begin{corollary}\label{cor:unsigned-upper-bound}
  As $n \to \infty$, for every fixed $d\geq 1$,
  \[ \log_2 \card{\CTT(n,d)} \le d^2\,n\log_2 n - (d+1)n\ \log_2(d!) + \Oh(\log n).\]
\end{corollary}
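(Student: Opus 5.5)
The plan is to inject $\CTT(n,d)$ into the set of pointed coherent matching fields $\CMFTbullet(n+r,r)$ with $r=d+1$. We then chain this with \cref{lem:cmf-inclusion} and \cref{cor:cmf-log-upper-bound}.

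\textbf{Step 1 (the injection).} Let $\A=\Amax(\u)$ be a sequence of $n$ max-tropical hyperplanes in general position, and let $U$ be a matrix of $\u$. By definition $\widehat{-U}\in\T^{(n+r)\times r}$ is generic, so it realizes a pointed coherent matching field $\Lambda(\u)\in\CMFTbullet(n+r,r)$. I will then invoke the Fink--Rinc\'on correspondence~\cite{FinkRincon2015}: the labeled combinatorial type of a generic tropical hyperplane arrangement is determined by, and in fact determines, the pointed matching field of its augmented matrix. The reason is that the types $T^{\A}(x)$ are read off from the cells of the regular subdivision of $\Delta^{n-1}\times\Delta^{d}$ induced by $-U$, equivalently from which entries of a minor attain the tropical permanent. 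The identity block $I_r$ encodes the coordinate sectors, so the maximal matchings on subsets mixing rows of $I_r$ and rows of $-U$ are exactly the matchings on square submatrices of $-U$ that are non-singular by genericity. Consequently $\operatorname{type}(\A)$ is a function of $\Lambda(\u)$, and distinct types have distinct pointed matching fields. This gives $\card{\CTT(n,d)}\le\card{\CMFTbullet(n+r,r)}$.

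\textbf{Step 2 (counting).} Using the first inequality of \cref{lem:cmf-inclusion}, we get $\card{\CMFTbullet(n+r,r)}\le\card{\CMFT(n+r,r)}$. By \cref{cor:cmf-log-upper-bound} applied with $n+r$ in place of $n$,
\[
\log_2\card{\CTT(n,d)}\le (r-1)^2(n+r)\log_2(n+r)-r(n+r)\log_2((r-1)!)+\Oh(1).
\]
Since $r$ is fixed, we have $(n+r)\log_2(n+r)=n\log_2 n+\Oh(\log n)$, and the term $r\cdot r\log_2((r-1)!)$ is $\Oh(1)$. Substituting $r-1=d$ yields $d^2n\log_2 n-(d+1)n\log_2(d!)+\Oh(\log n)$. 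This shift by $r$ rows is precisely what degrades the error term from $\Oh(1)$ to $\Oh(\log n)$.

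\textbf{Main obstacle.} The expected difficulty is Step 1, namely justifying that the combinatorial type is determined by the pointed matching field, which is all that is needed for the inequality. I would first try to cite Fink--Rinc\'on directly. If that fails, I would argue by hand as follows. Fix a generic point $x$, so that its type is a tuple of singletons per hyperplane. Whether $k\in T_j(x)$ is decided by comparing $x_j-u^k_j$ across $j$. Realizable types correspond to covectors, which are characterized through permanents of minors of $\widehat{-U}$ via tropical Cramer's rule. Finally, closed types on lower-dimensional cells are determined as closures of adjacent full-dimensional ones.
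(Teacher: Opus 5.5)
Your proposal is correct and follows the paper's own argument. The paper likewise uses the Fink--Rinc\'on correspondence to get $\#\CTT(n,d)\le\#\CMFTbullet(n+d+1,d+1)\le\#\CMFT(n+d+1,d+1)$, then applies \cref{cor:cmf-log-upper-bound} with $n+d+1$ in place of $n$, the shift costing $\Oh(\log n)$. Your by-hand fallback for Step~1 is only a sketch, but like the paper you can rest on the cited bijection, and only the direction ``the pointed matching field determines the type'' is needed.
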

\begin{proof}
  Fink and Rinc\'on~\cite[Theorem~4.4 and Example~4.8]{FinkRincon2015} constructed a bijection between the combinatorial types of $n$ max-tropical hyperplane arrangements in general position in $\TPfin^{r-1}$ and the pointed coherent matching fields of type $(n+r,r)$. Together with the Develin--Sturmfels correspondence~\cite{DevelinSturmfels2004}, this gives
  \begin{equation}\label{eq:ct-rt-cmf}
    \#\RT(n,r) = \#\CTT(n,r-1) = \#\CMFTbullet(n+r,r).
  \end{equation}
  Set $r=d+1$. By Lemma~\ref{lem:cmf-inclusion},
  $\#\CTT(n,d) \le \#\CMFT(n+d+1,d+1)$.
  Applying Corollary~\ref{cor:cmf-log-upper-bound} with $n+d+1$ in place of $n$
  gives the claimed bound, since $d$ is fixed and this shift changes
  the right-hand side by $\Oh(\log n)$.
\end{proof}

\section{The lower bounds of Theorem~\ref{thm:main-unsigned}}\label{sec:lower-unsigned}

In this section we prove our lower bounds on $\# \CMFT(n,r)$ and $\# \CTT(n,d)$. This time, we first prove the bound for $\# \CTT(n,d)$ then propagate it to $\# \CMFT(n,r)$, again via the correspondence between combinatorial types of tropical hyperplane arrangement and pointed coherent matching field established by Fink and Rincón.

\medskip

The analysis for $\# \CTT(n,d)$ has two main steps. The first is to construct an initial arrangement of $t$~tropical hyperplanes ($t \leq n$) that admits many 1-hyperplane extensions of distinct combinatorial types. We count these extensions using an \emph{auxiliary arrangement}. While in the usual affine setting any choice of hyperplanes in general position would be suitable, in the tropical setting a specific construction is required.\footnote{For the same reason, the argument of Goodman--Pollack~\cite[Remark~5]{GoodmanPollack1986} for the affine setting is ill-suited here.} The second step extends this initial arrangement by $n-t$ further tropical hyperplanes. The key point is that two sequences of 1-hyperplane extensions yield arrangements of distinct combinatorial types if they differ in at least one $1$-extension of the initial $t$ hyperplanes. We postpone the fine-tuning of $t$ to the last stage of the proof.

\subsection{Preparation: some background on tropical hyperplanes}\label{sec:unsigned-geometry}

Our proof of the lower bounds requires some additional background on tropical geometry: min-tropical hyperplane and the min/max duality, a better geometric description of tropical hyperplanes, and the tropical Cramer rule and its use for defining the tropical hyperplane spanned by $d$ points in $\TPfin^d$.

\paragraph{Min-tropical hyperplanes and sectors.}

Just like the max-tropical hyperplanes defined by~\cref{eq:max-tropical-hyperplane}, the \emph{min-tropical hyperplane} with apex $u\in\TPfin^{d}$ is defined by
\[\Hmin(u)\;\coloneqq\;  \Bigl\{x \in \TPfin^{d} \colon \min_{i\in[d+1]}(x_i-u_i) \text{ is attained at least twice}\Bigr\}.\]
The \(i\)-th closed \emph{sector} of \(\Hmin(u)\) in \(\TPfin^{d}\) is
\[\Smin_i(u)\coloneqq\{x \colon x_i - u_i \leq x_j - u_j \text{ for all } j \neq i\}.\]
this is similar to the definition of sectors of \(\Hmax(u)\) in Section~\ref{sec:pointed-cmf}, with the inequalities reversed.

We say that a point sequence \(\u=(u^1,\ldots,u^n)\) is in
\emph{min-general position} if \(\widehat U\) is generic, where
\(U\) is a matrix of \(\u\).
Equivalently, \(-\u\coloneqq(-u^1,\ldots,-u^n)\) is in max-general position.
We say that a sequence of min-tropical hyperplanes is in \emph{general position} if the sequence of its apices is in min-general position.

\paragraph{Facets.}

A max/min-tropical hyperplane in $\TPfin^{d}$ with apex $u$ is a polyhedral complex with a unique vertex $u$ and $\binom{d+1}{2}$ facets, one for each pair in \(\binom{[d+1]}{2}\). Specifically, for $\{i,j\} \in \binom{[d+1]}{2}$ we put
\begin{align}\label{eq:tropical-facet}
  \Fmax_{ij}(u)\coloneqq
  \{x \colon x_i - u_i = x_j - u_j \geq x_k - u_k \text{ for all } k \neq i,j\}
\end{align}
for the facet of $\Hmax(u)$ where the maximum is attained in the $i$th and
$j$th coordinates. Equivalently,
\(\Fmax_{ij}(u) = \Smax_i(u) \cap \Smax_j(u)\). The facets
\(\Fmin_{ij}(u)\) of $\Hmin(u)$ are defined similarly
to~\eqref{eq:tropical-facet}, with the inequalities reversed.
\Cref{fig:hyperplanes} illustrates these hyperplanes, their sectors, and their facets in
\(\TPfin^{2}\).

\begin{figure}[t]
  \centering
  \begin{tikzpicture}[
    x=0.82cm, y=0.82cm,
    >=latex, line cap=round, line join=round, font=\small,
    maxB/.style={RoyalBlue, line width=1pt},
    minline/.style={ForestGreen!70!black, line width=1pt},
    apex/.style={circle, fill, inner sep=0pt, minimum size=3pt},
    lab/.style={font=\footnotesize, fill=white, inner sep=1pt, outer sep=1pt},
    slabel/.style={font=\scriptsize\itshape, text=black!70, inner sep=0.5pt},
    flabel/.style={font=\scriptsize, inner sep=0.5pt, outer sep=1pt},
    panellab/.style={font=\itshape, anchor=south},
    sA/.style={fill=RoyalBlue!7, draw=none},
    sB/.style={fill=ForestGreen!9, draw=none},
    sC/.style={fill=BrickRed!5, draw=none},
  ]

  \begin{scope}[shift={(0,0)}]
    \node[panellab] at (0,3.0) {(a) \(\Hmax(u)\)};
    \begin{scope}
      \clip (-2.4,-2.4) rectangle (2.4,2.4);
      \fill[sA] (0,0) -- (0,-2.4) -- (2.4,-2.4) -- (2.4,2.4) -- cycle;
      \fill[sB] (0,0) -- (2.4,2.4) -- (-2.4,2.4) -- (-2.4,0) -- cycle;
      \fill[sC] (0,0) -- (-2.4,0) -- (-2.4,-2.4) -- (0,-2.4) -- cycle;
      \draw[maxB] (0,0) -- (-2.4,0);
      \draw[maxB] (0,0) -- (0,-2.4);
      \draw[maxB] (0,0) -- (2.4,2.4);
    \end{scope}
    \node[apex, RoyalBlue] at (0,0) {};
    \node[lab, text=RoyalBlue, anchor=north east] at (-0.04,-0.04) {$u$};
    \node[slabel] at (1.45,-1.3) {$\Smax_1(u)$};
    \node[slabel] at (-0.95,1.55) {$\Smax_2(u)$};
    \node[slabel] at (-1.3,-1.3) {$\Smax_3(u)$};
    \node[flabel, text=RoyalBlue, anchor=south] at (-1.30,0.12) {$\Fmax_{23}(u)$};
    \node[flabel, text=RoyalBlue, anchor=west] at (0.12,-1.95) {$\Fmax_{13}(u)$};
    \node[flabel, text=RoyalBlue, rotate=45, anchor=south] at (1.35,1.45) {$\Fmax_{12}(u)$};
  \end{scope}

  \begin{scope}[shift={(6.6,0)}]
    \node[panellab] at (0,3.0) {(b) \(\Hmin(u)\)};
    \begin{scope}
      \clip (-2.4,-2.4) rectangle (2.4,2.4);
      \fill[sA] (0,0) -- (0,2.4) -- (-2.4,2.4) -- (-2.4,-2.4) -- cycle;
      \fill[sB] (0,0) -- (-2.4,-2.4) -- (2.4,-2.4) -- (2.4,0) -- cycle;
      \fill[sC] (0,0) -- (2.4,0) -- (2.4,2.4) -- (0,2.4) -- cycle;
      \draw[minline] (0,0) -- (2.4,0);
      \draw[minline] (0,0) -- (0,2.4);
      \draw[minline] (0,0) -- (-2.4,-2.4);
    \end{scope}
    \node[apex, ForestGreen!70!black] at (0,0) {};
    \node[lab, text=ForestGreen!55!black, anchor=south west] at (0.05,0.06) {$u$};
    \node[slabel] at (-1.4,0.75) {$\Smin_1(u)$};
    \node[slabel] at (0.4,-1.55) {$\Smin_2(u)$};
    \node[slabel] at (1.3,1.3) {$\Smin_3(u)$};
    \node[flabel, text=ForestGreen!70!black, anchor=north] at (1.30,-0.12) {$\Fmin_{23}(u)$};
    \node[flabel, text=ForestGreen!70!black, anchor=east] at (-0.12,1.95) {$\Fmin_{13}(u)$};
    \node[flabel, text=ForestGreen!70!black, rotate=45, anchor=north] at (-1.35,-1.45) {$\Fmin_{12}(u)$};
  \end{scope}
\end{tikzpicture}
  \caption{A max-tropical hyperplane \(\Hmax(u)\) and a min-tropical hyperplane
  \(\Hmin(u)\), each with its three sectors and three facets, in the chart \(x_3=0\).}
  \label{fig:hyperplanes}
\end{figure}

\paragraph{Tropical point-hyperplane duality.}

There is a duality between tropical points and tropical hyperplanes that preserves incidences similarly to the  classical point-hyperplane duality:
\begin{eqnarray}\label{eq:max-min-incidence-duality}
  x\in \Hmax(u) & \quad\Longleftrightarrow\quad & u\in \Hmin(x)\\
  x\in \Smax_i(u) & \quad\Longleftrightarrow\quad & u\in \Smin_i(x)
\end{eqnarray}
Consequently, for apices \(u^1,\ldots,u^{d}\) and a point
\(x\in\TPfin^{d}\) (see \cref{fig:incidence-duality}),
\[
  x\in \Hmax(u^1)\cap\cdots\cap \Hmax(u^{d})
  \quad\Longleftrightarrow\quad
  \{u^1, \ldots, u^d\} \subset \Hmin(x).
\]

\begin{figure}[t]
  \centering
  \begin{tikzpicture}[
    x=0.82cm, y=0.82cm,
    >=latex, line cap=round, line join=round, font=\small,
    maxB/.style={RoyalBlue, line width=1pt},
    maxR/.style={BrickRed!85!black, line width=0.95pt},
    minline/.style={ForestGreen!70!black, line width=1pt},
    apexB/.style={circle, fill=RoyalBlue, draw=black, line width=0.25pt,
      inner sep=0pt, minimum size=3.3pt},
    apexR/.style={circle, fill=BrickRed!85!black, draw=black, line width=0.25pt,
      inner sep=0pt, minimum size=3.3pt},
    pt/.style={circle, fill=black!85, inner sep=0pt, minimum size=3.2pt},
    lab/.style={font=\footnotesize, fill=white, inner sep=1pt, outer sep=1pt},
    tiny/.style={font=\scriptsize, fill=white, inner sep=1pt, outer sep=1pt},
    panellab/.style={font=\itshape, anchor=south},
  ]

  \begin{scope}[shift={(0,0)}]
    \node[panellab] at (0.45,3.0) {(a) \(x\in\Hmax(u^k)\)};
    \begin{scope}
      \clip (-2.3,-2.35) rectangle (3.2,2.6);
      \draw[maxB] (0,1) -- (-2.3,1);
      \draw[maxB] (0,1) -- (0,-2.35);
      \draw[maxB] (0,1) -- (1.6,2.6);
      \draw[maxR] (1,0) -- (-2.3,0);
      \draw[maxR] (1,0) -- (1,-2.35);
      \draw[maxR] (1,0) -- (3.2,2.2);
    \end{scope}
    \node[apexB] at (0,1) {};
    \node[lab, text=RoyalBlue, anchor=south east] at (-0.05,1.05) {$u^1$};
    \node[apexR] at (1,0) {};
    \node[lab, text=BrickRed!85!black, anchor=north west] at (1.05,-0.05) {$u^2$};
    \node[pt] at (0,0) {};
    \node[lab, anchor=north east] at (-0.05,-0.05) {$x$};
  \end{scope}

  \begin{scope}[shift={(6.6,0)}]
    \node[panellab] at (0.45,3.0) {(b) \(u^k\in\Hmin(x)\)};
    \begin{scope}
      \clip (-2.3,-2.35) rectangle (3.2,2.6);
      \draw[minline] (0,0) -- (3.2,0);
      \draw[minline] (0,0) -- (0,2.6);
      \draw[minline] (0,0) -- (-2.3,-2.3);
    \end{scope}
    \node[pt] at (0,0) {};
    \node[lab, anchor=north east] at (-0.05,-0.05) {$x$};
    \node[apexB] at (0,1) {};
    \node[lab, text=RoyalBlue, anchor=south east] at (-0.05,1.05) {$u^1$};
    \node[apexR] at (1,0) {};
    \node[lab, text=BrickRed!85!black, anchor=north west] at (1.05,-0.05) {$u^2$};
    \node[tiny, text=ForestGreen!70!black, anchor=south west] at (0.18,1.55)
      {$\Hmin(x)$};
  \end{scope}
\end{tikzpicture}
  \caption{The point--hyperplane incidence duality~\eqref{eq:max-min-incidence-duality}
  in the chart \(x_3=0\): \emph{(a)} a point \(x\) on the max-tropical hyperplanes
  with apices \(u^1,u^2\); \emph{(b)} equivalently, \(u^1,u^2\) on the min-tropical
  hyperplane \(\Hmin(x)\).}
  \label{fig:incidence-duality}
\end{figure}

\paragraph{Tropical Cramer rule.}

Let \(A\in\R^{d\times(d+1)}\), and let \(A_{\widehat j}\) be
the square matrix obtained by deleting the $j$th column.
The \emph{Cramer vector} \(\operatorname{cr}(A)\in\R^{d+1}\)
is defined by
\[
  [\operatorname{cr}(A)]_j
  \coloneqq \tper(A_{\widehat j})
  =
  \max_{\sigma \colon [d]\xrightarrow{\sim}[d+1]\setminus\{j\}}
  \sum_{i=1}^{d} A_{i,\sigma(i)},
  \qquad j\in[d+1] \, ,
\]
where $\sigma \colon [d]\xrightarrow{\sim}[d+1]\setminus\{j\}$ means that $\sigma$ is a bijective map from $[d]$ to $[d+1]\setminus\{j\}$.

The tropical Cramer rule
(see, e.g.,~\cite[Cor.~5.4]{RichterGebertSturmfelsTheobald2003}
and~\cite[Th.~5.11]{AkianGaubertGuterman2014})
states that $\operatorname{cr}(A)$ solves the system
\[ \left\{\max_{j\in[d+1]}(A_{i,j}+y_j)  \quad\text{is attained at least twice}\right\}_{i\in[d]}\]
with indeterminates \(y\in\R^{d+1}\). 
When \(A\) is generic, \(\operatorname{cr}(A)\) is the unique
solution up to adding the same constant to all coordinates. Adding a constant to a row of a generic matrix \(A\) adds the same constant to every coordinate of \(\operatorname{cr}(A)\). Hence, if we interpret the rows of $A$ as the coordinates of a point sequence \(\u=(u^1,\ldots,u^d)\) in \(\TPfin^d\), then the projective class of \(\operatorname{cr}(A)\) only depends on that point sequence. We call this projective class of \(\operatorname{cr}(A)\), that is this point in \(\TPfin^d\), the \emph{Cramer point} of $\u$ and denote it by \(\operatorname{cr}(\u)\in\TPfin^d\). The tropical Cramer rule therefore implies that \(d\) tropical hyperplanes in general position intersect in a single point:

\begin{lemma}\label{lem:uniqueCramer}
Let \(\u=(u^1,\ldots,u^d)\) be a point sequence in \(\TPfin^d\).
\begin{itemize}
  \item[(i)] If \(\u\) is in max-general position, then $\Hmax(u^1)\cap\ldots\cap\Hmax(u^d) =\{\operatorname{cr}(-\u)\}$.
  \item[(ii)] If \(\u\) is in min-general position, then $\Hmin(u^1)\cap\ldots\cap\Hmin(u^d) =\{-\operatorname{cr}(\u)\}$.
\end{itemize}
\end{lemma}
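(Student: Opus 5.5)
Part (i) reduces to the tropical Cramer rule as stated just before the lemma, and part (ii) then follows from (i) by the sign symmetry $x\mapsto -x$.

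For (i), I translate membership in a max-tropical hyperplane into the system solved by the Cramer vector. By definition, $x\in\Hmax(u^k)$ means that $\max_{j\in[d+1]}(x_j-u^k_j)$ is attained at least twice. Let $U$ be a matrix of $\u$ and set $A\coloneqq -U\in\R^{d\times(d+1)}$. Then $x\in\Hmax(u^1)\cap\cdots\cap\Hmax(u^d)$ is exactly the statement that $y=x$ solves the system
\[
\Bigl\{\max_{j\in[d+1]}(A_{i,j}+y_j)\ \text{is attained at least twice}\Bigr\}_{i\in[d]}.
\]
This condition is invariant under adding a constant to $y$, so it is well defined on $\TPfin^d$.

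Next I check that $A$ is generic in the sense needed for the uniqueness part of the Cramer rule. Max-general position of $\u$ means $\widehat{-U}$ is generic. By the footnote on pointed augmentation, every square submatrix of $-U$ is then non-singular. In particular each $d\times d$ submatrix $A_{\widehat j}$ is non-singular, which is what genericity of the $d\times(d+1)$ matrix $A$ requires. The tropical Cramer rule then says $\operatorname{cr}(A)$ solves the system, and it is the unique solution modulo adding a common constant. The projective class of $\operatorname{cr}(A)$ is by definition the Cramer point of the sequence of rows of $A$, namely $-\u$. Hence the intersection is exactly $\{\operatorname{cr}(-\u)\}$.

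For (ii), I use the symmetry $x\mapsto -x$. Since $\min_j(x_j-u_j)=-\max_j((-x_j)-(-u_j))$, we have $x\in\Hmin(u)$ if and only if $-x\in\Hmax(-u)$. Min-general position of $\u$ is, as noted in the paper, equivalent to max-general position of $-\u$. Applying (i) to $-\u$ gives $\bigcap_k\Hmax(-u^k)=\{\operatorname{cr}(\u)\}$, and negating yields $\bigcap_k\Hmin(u^k)=\{-\operatorname{cr}(\u)\}$.

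The only step needing care is the genericity transfer in (i). General position is defined through the pointed augmentation, a full-rank condition on an $(d+d+1)\times(d+1)$ matrix. The Cramer rule instead needs non-singularity of the maximal minors of a $d\times(d+1)$ matrix. The footnote's correspondence between square submatrices of $B$ and maximal submatrices of $\widehat B$ bridges these two conditions.
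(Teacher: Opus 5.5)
Your proposal is correct and follows essentially the same route as the paper. For (i), you rewrite membership in $\Hmax(u^i)$ as the tropical system with coefficient matrix $-U$ and invoke the tropical Cramer rule; for (ii), you deduce the result from (i) via $x\in\Hmin(u)\iff -x\in\Hmax(-u)$. Your explicit use of the pointed-augmentation footnote, to pass from genericity of $\widehat{-U}$ to non-singularity of the $d\times d$ submatrices $A_{\widehat j}$, is a step the paper leaves implicit, and it is the right justification.
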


\begin{proof}
For (i), let \(U\in\R^{d\times(d+1)}\)
be a matrix whose rows represent \(u^1,\ldots,u^d\). For each \(i\in[d]\),
\[
  x\in\Hmax(u^i)
  \quad\Longleftrightarrow\quad
  \max_{j\in[d+1]}\bigl((-U)_{i,j}+x_j\bigr)
  \text{ is attained at least twice}.
\]
The \(d\) tropical equations on the right have coefficient
matrix \(-U\). Since \(\u\) is in max-general position, the tropical Cramer rule therefore gives
\(\operatorname{cr}(-\u)\) as the unique solution of this system of \(d\) tropical equations in \(\TPfin^d\).

For (ii), \(-\u\) is in max-general position and, for each \(i\in[d]\),
\(x\in\Hmin(u^i)\) if and only if \(-x\in\Hmax(-u^i)\).
Applying (i) to \(-\u\) and taking negatives proves the claim.
\end{proof}

\paragraph{Spanned tropical hyperplane.}

Let \(\u=(u^1,\ldots,u^d)\) be a point sequence in \(\TPfin^d\),
and set \(v=\operatorname{cr}(-\u)\).
By the tropical Cramer rule, \(v\in\Hmax(u^i)\) for every \(i\in[d]\).
The incidence duality~\eqref{eq:max-min-incidence-duality}
then gives \(u^i\in\Hmin(v)\), so \(\Hmin(v)\) contains all
points of \(\u\). If \(\u\) is in max-general position, the preceding lemma gives
\(
  \Hmax(u^1)\cap\ldots\cap\Hmax(u^d)=\{v\}
\).
Thus \(\Hmin(v)\) is the unique
min-tropical hyperplane containing these points; we call it the
\emph{min-tropical hyperplane spanned by} \(\u\).
Similarly, if \(\u\) is in min-general position, the
\emph{max-tropical hyperplane spanned by} \(\u\) is
\(\Hmax({-\operatorname{cr}(\u)})\).

\subsection{The auxiliary arrangement}\label{sec:unsigned-extension-step}

We now turn our attention to the task of
controlling the combinatorial type of an extension.

\paragraph{Auxiliary arrangement.}

Let \(\u=(u^1,\ldots,u^n)\) be a point sequence in max-general position in
\(\TPfin^{r-1}\). We define its \emph{auxiliary arrangement} by
\[
  \mathcal B(\u)
  \coloneqq \pth{\Hmin(x_I) \colon I\in\binom{[n]}{r-1}},
  \qquad
  \{x_I\}:=\bigcap_{i\in I}\Hmax(u^i).
\]
Thus \(\mathcal B(\u)\) is an arrangement of \(\binom{n}{r-1}\)
min-tropical hyperplanes, illustrated in
\cref{fig:auxiliary-arrangement}. Although \(\Amax(\u)\) is in
general position, \(\mathcal B(\u)\) need not be. To use the notion of the type of a point with
respect to a hyperplane sequence, we may fix an arbitrary order on the
\((r-1)\)-element subsets of \([n]\), for example the lexicographic
order, and give \(\mathcal B(\u)\) the induced order.

\begin{figure}[t]
  \centering
  \begin{tikzpicture}[
    x=0.58cm, y=0.58cm,
    line cap=round, line join=round,
    maxold/.style={black!45, dashed, line width=0.65pt},
    minline/.style={ForestGreen!65!black, line width=1.05pt},
    oldpt/.style={circle, fill=black!75, inner sep=0pt, minimum size=3pt},
    centerpt/.style={circle, fill=Goldenrod, draw=black, line width=0.35pt,
      inner sep=0pt, minimum size=3.6pt},
    lab/.style={font=\footnotesize, fill=white, inner sep=1pt, outer sep=1pt},
    panellab/.style={font=\itshape, anchor=south}
  ]

  \begin{scope}
    \node[panellab] at (2.175,5.85) {(a) \(\Amax(\u)\)};
    \begin{scope}
      \clip (-2.0,-0.8) rectangle (6.35,5.7);
      \foreach \x/\y in {0/4,2/2.75,5.75/4.75,4.25/0} {
        \draw[maxold] (\x,\y) -- ++(-20,0);
        \draw[maxold] (\x,\y) -- ++(0,-20);
        \draw[maxold] (\x,\y) -- ++(20,20);
      }
    \end{scope}
    \node[oldpt] at (0,4) {};
    \node[lab, anchor=south east] at (-0.07,4.07) {$u^1$};
    \node[oldpt] at (2,2.75) {};
    \node[lab, anchor=south east] at (1.93,2.82) {$u^2$};
    \node[oldpt] at (5.75,4.75) {};
    \node[lab, anchor=south east] at (5.68,4.82) {$u^3$};
    \node[oldpt] at (4.25,0) {};
    \node[lab, anchor=south east] at (4.18,0.07) {$u^4$};
  \end{scope}

  \begin{scope}[shift={(9.45,0)}]
    \node[panellab] at (2.175,5.85) {(b) \(\mathcal B(\u)\)};
    \begin{scope}
      \clip (-2.0,-0.8) rectangle (6.35,5.7);
      \foreach \x/\y in {0/0,0.75/4.75,2/0,4/4.75,5.75/1.5} {
        \draw[minline] (\x,\y) -- ++(0,20);
      }
      \foreach \x/\y in {0/0,0/2.75,0.75/4.75,5.75/1.5} {
        \draw[minline] (\x,\y) -- ++(20,0);
      }
      \foreach \x/\y in {0/2.75,0.75/4.75,0/0,4/4.75,2/0,5.75/1.5} {
        \draw[minline] (\x,\y) -- ++(-20,-20);
      }
    \end{scope}
    \node[centerpt] at (0,2.75) {};
    \node[lab, anchor=north west] at (0.08,2.67) {$x_{12}$};
    \node[centerpt] at (0.75,4.75) {};
    \node[lab, anchor=north west] at (0.83,4.67) {$x_{13}$};
    \node[centerpt] at (0,0) {};
    \node[lab, anchor=north west] at (0.08,-0.08) {$x_{14}$};
    \node[centerpt] at (4,4.75) {};
    \node[lab, anchor=north west] at (4.08,4.67) {$x_{23}$};
    \node[centerpt] at (2,0) {};
    \node[lab, anchor=north west] at (2.08,-0.08) {$x_{24}$};
    \node[centerpt] at (5.75,1.5) {};
    \node[lab, anchor=south east] at (5.67,1.58) {$x_{34}$};
  \end{scope}
\end{tikzpicture}
  \caption{The arrangement \(\Amax(\u)\) and its
  auxiliary arrangement \(\mathcal B(\u)\) in \(\TPfin^2\).}
  \label{fig:auxiliary-arrangement}
\end{figure}

\paragraph{Generic extensions and extension lemma.}

We define a \emph{generic extension} of $\u$ as a sequence of points $\v = (u^1,\ldots,u^n, v^1, \ldots, v^k)$, for some $k \ge 1$, such that $\v$ is in max-general position.

\begin{lemma}\label{lem:generic-extension-existence}
  Let \(\u\) be a (possibly empty) point sequence in max-general position in
  \(\TPfin^d\). For every \(k\geq1\), every nonempty open set
  \(O\subset(\TPfin^d)^k\) contains a \(k\)-tuple giving a generic
  extension of \(\u\).
\end{lemma}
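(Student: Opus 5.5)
The plan is to show that the set of bad $k$-tuples is contained in a finite union of proper affine hyperplanes of $(\R^{d})^k\simeq(\TPfin^d)^k$. Such a union has empty interior, so $O$ cannot be contained in it. Fix representatives, normalizing the last homogeneous coordinate of every point to $0$, so that a $k$-tuple $(v^1,\ldots,v^k)$ is a vector $z\in\R^{dk}$. Let $U$ be the matrix of $\u$ (with $n$ rows, possibly $n=0$) and $V(z)$ the matrix of the $v$'s. We must ensure that $\widehat{-W(z)}$ is generic, where $W(z)=\binom{U}{V(z)}$. By the footnote on pointed augmentations, this holds exactly when every square submatrix of $-W(z)$ is non-singular.

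Fix a square submatrix of $-W(z)$, given by rows $K$ and columns $J$ with $|K|=|J|=j$. All entries are finite, so the permanent is finite. The submatrix is singular exactly when two distinct bijections $\sigma\neq\tau$ from $K$ to $J$ attain the maximum. This forces the equality $\sum_{i\in K}(-W)_{i,\sigma(i)}=\sum_{i\in K}(-W)_{i,\tau(i)}$. For each such triple $(K,J,\{\sigma,\tau\})$ this is an affine-linear equation in $z$. The bad set is therefore contained in the finite union over all triples of the zero sets $Z_{K,J,\sigma,\tau}$ of these affine functions $\ell_{K,J,\sigma,\tau}(z)$.

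Next, split according to whether $K$ meets the new rows. If $K$ lies entirely among the old rows, the submatrix is a square submatrix of $-U$. It is non-singular because $\u$ is in max-general position and, by the footnote, all square submatrices of $-U$ are non-singular. Such triples contribute nothing. If $K$ contains a new row $i$, I will choose a pair $(\sigma,\tau)$ carefully. This is the main point to check: we need only the pairs with $\sigma$ and $\tau$ both maximizing, but it is simpler to discard every pair for which $\ell$ vanishes identically.

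I claim that $\ell$ vanishes identically only when $\sigma=\tau$ on the new rows, and even then $\ell$ is a constant depending only on $U$. Indeed, the difference of the two sums equals the contribution from the new rows, which is a linear form in $z$, plus a constant from the old rows. If $\sigma(i)\neq\tau(i)$ for some new row $i$, the linear part involves the variables $(v)_{\sigma(i)}$ and $(v)_{\tau(i)}$ with coefficients $-1$ and $+1$. At most one of these is the normalized coordinate $d+1$, so some genuine variable has a nonzero coefficient. There is no cancellation, since each new point's coordinates appear only in its own row. Hence $Z_{K,J,\sigma,\tau}$ is a proper affine hyperplane.

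Now suppose $\sigma$ and $\tau$ agree on the new rows. Then they differ on the old rows $K'\subset K$ mapping bijectively onto the same column set $J'$. If both attained the maximum for the full submatrix, then their restrictions to $K'$ would both be maximal for the square submatrix of $-U$ indexed by $K',J'$. This contradicts the non-singularity of $-U$. Such pairs are therefore never both maximizing, and they can be excluded.

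Thus every singular configuration lies in a finite union of proper affine hyperplanes. A nonempty open set is not covered by finitely many proper hyperplanes, for instance by Baire or by a measure-zero argument. So $O$ contains a good $k$-tuple. The only delicate step is the bookkeeping in the case where $\sigma$ and $\tau$ agree on the new rows, handled above using the max-general position of $\u$. The empty $\u$ case is the same argument with no old rows.
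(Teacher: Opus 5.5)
Your proof is correct and follows the same route as the paper: the bad $k$-tuples lie in a finite union of proper affine hyperplanes, which cannot cover a nonempty open set. You also supply the details the paper leaves implicit. When $\sigma$ and $\tau$ differ on a new row, the comparison hyperplane is proper. When they agree on all new rows, the max-general position of $\u$ prevents both from being maximizing.
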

\begin{proof}
  With \(\u\) fixed, the tuples failing to give a generic
  extension lie in a finite union of proper \underline{affine}
  hyperplanes. These finitely many hyperplanes cannot cover \(O\),
  which therefore contains the required \(k\)-tuple.
\end{proof}

\begin{lemma}[Extension lemma]\label{lem:unsigned-extension}
  Let $t \ge r-1$, let $\u = (u^1,\ldots,u^t)$ be a sequence of points in max-general position in $\TPfin^{r-1}$, and let \(\mathcal B\) be its auxiliary arrangement. Let \(\v = (u^1,\ldots,u^t, v^1, \ldots, v^k)\) and \(\w = (u^1,\ldots,u^t, w^1, \ldots, w^k)\) be two generic extensions of $\u$. If the sequences of types $(T^{\mathcal B}(v^1), \ldots, T^{\mathcal B}(v^k))$ and $(T^{\mathcal B}(w^1), \ldots, T^{\mathcal B}(w^k))$ differ, then the combinatorial types of the arrangements $\Amax(\v)$ and $\Amax(\w)$ differ.
\end{lemma}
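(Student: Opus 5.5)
The plan is to recover, from the combinatorial type of $\Amax(\v)$, the type of each added apex $v^\ell$ with respect to $\mathcal B$. Once this is done, two extensions with different type sequences must have different combinatorial types. Fix $\ell\in[k]$ and $I\in\binom{[t]}{r-1}$. By definition, the $I$-th component of $T^{\mathcal B}(v^\ell)$ is the set of sector indices $j$ with $v^\ell\in\Smin_j(x_I)$, where $\{x_I\}=\bigcap_{i\in I}\Hmax(u^i)$. The incidence duality~\eqref{eq:max-min-incidence-duality} gives $v^\ell\in\Smin_j(x_I)$ if and only if $x_I\in\Smax_j(v^\ell)$. So it suffices to show that $\operatorname{type}(\Amax(\v))$ determines, for every $I$ and $\ell$, the set $\{j : x_I\in\Smax_j(v^\ell)\}$.

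The key step is to read this off from the type of the vertex $x_I$. Since $x_I$ is a point of $\TPfin^{r-1}$, its type $T^{\Amax(\v)}(x_I)$ belongs to $\operatorname{type}(\Amax(\v))$. The $(t+\ell)$-th hyperplane appears in the $j$-th coordinate of this type exactly when $x_I\in\Smax_j(v^\ell)$. The remaining task is to identify which element of the labeled type corresponds to $x_I$, using only the type itself. For this I would use the characterization: $x$ lies in $\Hmax(u^i)$ iff index $i$ appears in at least two coordinates of $T(x)$. Lemma~\ref{lem:uniqueCramer}(i), applied to the max-general-position subsequence $(u^i)_{i\in I}$, gives that $x_I$ is the \emph{unique} point lying on all $\Hmax(u^i)$, $i\in I$. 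Hence $T(x_I)$ is the unique element $T$ of $\operatorname{type}(\Amax(\v))$ in which every $i\in I$ appears at least twice. Uniqueness holds even at the level of types, because any point whose type has this property is on all these hyperplanes and therefore equals $x_I$.

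Conclusion: if $T^{\mathcal B}(v^\ell)\neq T^{\mathcal B}(w^\ell)$ for some $\ell$, then for some $I$ the sector sets of $x_I$ (which is common to $\v$ and $\w$, since it depends only on $\u$) with respect to the $(t+\ell)$-th hyperplane differ. The types of $\Amax(\v)$ and $\Amax(\w)$ then differ in their unique element that has every $i\in I$ repeated, so the two types are different.

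The main obstacle is making the identification of $T(x_I)$ airtight. It requires uniqueness of the intersection point, which Lemma~\ref{lem:uniqueCramer} supplies under max-general position of $(u^i)_{i\in I}$. That condition is inherited by subsequences, since submatrices of a generic pointed augmentation remain generic. One also has to make sure that sets of sector indices, rather than single sectors, are what the two sides compare. Since both the $\mathcal B$-type and $T^{\Amax}(x_I)$ record full sets of indices, including ties when $x_I$ lies on $\Hmax(v^\ell)$, the duality handles this exactly. Genericity of the extension is needed only to stay within $\CTT$ and is otherwise not used.
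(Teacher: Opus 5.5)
Your proposal is correct and is essentially the paper's argument. A difference in the $\mathcal B$-types at some $I$ transfers, via the incidence duality, to a difference at label $t+\ell$ in the type of the vertex $x_I$. The uniqueness of $x_I=\bigcap_{i\in I}\Hmax(u^i)$ from \cref{lem:uniqueCramer} then ensures that no other point realizes that type. Your intrinsic characterization of $T(x_I)$, as the unique element of the labeled type in which every $i\in I$ appears in at least two coordinates, is the same as the paper's step of restricting to the first $t$ labels.
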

\begin{proof}
  Set $\A:=\Amax(\u)$, $\A_{\v}:=\Amax(\v)$ and $\A_{\w}:=\Amax(\w)$. Choose an index \(\ell\in[k]\) for which
  \(T^{\mathcal B}(v^\ell)\neq T^{\mathcal B}(w^\ell)\). There is then some
  \(I\in\binom{[t]}{r-1}\) such that \(v^\ell\) and
  \(w^\ell\) lie in different sectors of \(\Hmin(x_I)\in\mathcal B\),
  where $\{x_I\}:=\bigcap_{i\in I}\Hmax(u^i)$. By the point--hyperplane incidence duality
  \eqref{eq:max-min-incidence-duality}, \(x_I\) lies in different sectors of
  \(\Hmax(v^\ell)\) and \(\Hmax(w^\ell)\). These two hyperplanes have the same label \(t+\ell\) in \(\A_{\v}\) and \(\A_{\w}\), respectively, so \(T^{\A_{\v}}(x_I)\) and \(T^{\A_{\w}}(x_I)\) differ at
  label \(t+\ell\).

  We claim that no point \(y\in \TPfin^{r-1}\) has type $T^{\A_{\w}}(y)=T^{\A_{\v}}(x_I)$. Indeed, restricting to the first \(t\) labels would force \(T^{\A}(y)=T^{\A}(x_I)\), which implies that $y$, like $x_I$, lies on $\cap_{i \in I} \Hmax(u^i)$. Since $\u$ is in general position, however, $\cap_{i \in I} \Hmax(u^i)$ is a unique point by \cref{lem:uniqueCramer}, so that \(y=x_I\) and $T^{\A_{\w}}(y)=T^{\A_{\w}}(x_I) \neq T^{\A_{\v}}(x_I)$. The arrangements $\A_{\v}$ and $\A_{\w}$ therefore have different combinatorial types.
\end{proof}

\Cref{fig:apex-induction-step} illustrates the data at the separating label
\(t+\ell\) in the case \(r=3\) and \(I=\{1,2\}\). The apices \(u^1\) and
\(u^2\) span \(\Hmin(x_{12})\), while \(v^\ell\) and \(w^\ell\) lie in different
sectors of this hyperplane. Dually, \(x_{12}\) lies in different sectors of
\(\Hmax(v^\ell)\) and \(\Hmax(w^\ell)\). Only this separating pair
is drawn; the remaining \(k-1\) extension points of \(\v\) and of
\(\w\) are omitted.

\begin{figure}[t]
  \centering
  \begin{tikzpicture}[
    x=0.58cm, y=0.58cm,
    line cap=round, line join=round,
    maxold/.style={black!45, dashed, line width=0.65pt},
    maxnew/.style={RoyalBlue!80!black, dashed, line width=0.85pt},
    minline/.style={ForestGreen!65!black, line width=1.05pt},
    oldpt/.style={circle, fill=black!75, inner sep=0pt, minimum size=3pt},
    newpt/.style={circle, fill=RoyalBlue!85!black, draw=black, line width=0.25pt,
      inner sep=0pt, minimum size=3.5pt},
    centerpt/.style={circle, fill=Goldenrod, draw=black, line width=0.35pt,
      inner sep=0pt, minimum size=3.6pt},
    lab/.style={font=\footnotesize, fill=white, inner sep=1pt, outer sep=1pt},
    tiny/.style={font=\scriptsize, fill=white, inner sep=1pt, outer sep=1pt},
    panellab/.style={font=\itshape, anchor=south}
  ]

  \newcommand{\drawmaxline}[2]{%
    \draw[#1] (#2) -- ++(-20,0);
    \draw[#1] (#2) -- ++(0,-20);
    \draw[#1] (#2) -- ++(20,20);
  }

  \newcommand{\drawminline}{%
    \draw[minline] (0,2.75) -- ++(20,0);
    \draw[minline] (0,2.75) -- ++(0,20);
    \draw[minline] (0,2.75) -- ++(-20,-20);
  }

  \newcommand{\drawcommon}{%
    \node[centerpt] at (0,2.75) {};
    \node[lab, anchor=north west] at (0.08,2.67) {$x_{12}$};
    \node[oldpt] at (0,4) {};
    \node[lab, anchor=south east] at (-0.07,4.07) {$u^1$};
    \node[oldpt] at (2,2.75) {};
    \node[lab, anchor=south east] at (1.93,2.82) {$u^2$};
    \node[tiny, text=ForestGreen!65!black, anchor=south] at (5.05,2.85)
      {$\Hmin(x_{12})$};
  }

  \begin{scope}
    \node[panellab] at (2.175,5.85) {(a) \(v^\ell\) in one sector};
    \begin{scope}
      \clip (-2.0,-0.8) rectangle (6.35,5.7);
      \drawmaxline{maxold}{0,4}
      \drawmaxline{maxold}{2,2.75}
      \drawmaxline{maxnew}{-1,2.35}
      \drawminline
    \end{scope}
    \drawcommon
    \node[newpt] at (-1,2.35) {};
    \node[lab, text=RoyalBlue!80!black, anchor=north east] at (-1.07,2.28)
      {$v^\ell$};
  \end{scope}

  \begin{scope}[shift={(9.45,0)}]
    \node[panellab] at (2.175,5.85) {(b) \(w^\ell\) in another sector};
    \begin{scope}
      \clip (-2.0,-0.8) rectangle (6.35,5.7);
      \drawmaxline{maxold}{0,4}
      \drawmaxline{maxold}{2,2.75}
      \drawmaxline{maxnew}{1.1,1.1}
      \drawminline
    \end{scope}
    \drawcommon
    \node[newpt] at (1.1,1.1) {};
    \node[lab, text=RoyalBlue!80!black, anchor=south east] at (1.03,1.17)
      {$w^\ell$};
  \end{scope}
\end{tikzpicture}
  \caption{The separating \(\ell\)-th extension points in the proof of
  \cref{lem:unsigned-extension}; the other \(k-1\) extension points are omitted.}
  \label{fig:apex-induction-step}
\end{figure}

\subsection{Point sequences with many extensions}\label{sec:basic-config-unsigned}

We now construct a sequence \(\a\) of points in \(\TPfin^d\) such that \(\mathcal B(\a)\) has many full-dimensional cells. In the affine setting, points in general position suffice (see Appendix~\ref{a:mistake-GP}), whereas the tropical setting requires a specific construction. The key difference lies in the way tropical hyperplanes can overlap. Even when \(\a\) is in max-general position, the hyperplanes of \(\mathcal B(\a)\) can overlap along facets; see \cref{fig:auxiliary-arrangement}.\footnote{Spanned hyperplanes in the affine setting also exhibit forced incidences, but distinct affine lines cannot share a ray.} Already in \(\TPfin^2\), two tropical lines can share a ray and meet a third line at the same point, and such overlap is unavoidable in auxiliary arrangements (\cref{fig:auxiliary-overlap}). This can be viewed as degenerations in which two-dimensional cells collapse onto segments or points.

\begin{figure}[h]
  \centering
  \begin{tikzpicture}[
    x=0.53cm, y=0.53cm,
    line cap=round, line join=round,
    font=\small,
    auxiliary/.style={black!40, dashed, line width=0.65pt},
    minline/.style={ForestGreen!65!black, line width=1.05pt},
    secantB/.style={RoyalBlue!50, line width=0.45pt},
    secantR/.style={BrickRed!55!white, line width=0.45pt},
    apexB/.style={circle, fill=RoyalBlue!85!black, inner sep=0pt, minimum size=2.0pt},
    apexR/.style={circle, fill=BrickRed!85!black, inner sep=0pt, minimum size=2.0pt},
    lab/.style={font=\footnotesize, fill=white, inner sep=1pt, outer sep=1pt},
    intersectionpt/.style={circle, fill=black!75, inner sep=0pt, minimum size=3pt}
  ]
  \begin{scope}
    \begin{scope}
      \clip (-1.4,-1.05) rectangle (7.8,5.7);
      \foreach \x/\y in {0/0,0.75/4.75,2/0,4/4.75,5.75/1.5} {
        \draw[auxiliary] (\x,\y) -- ++(0,20);
      }
      \foreach \x/\y in {0/0,0/2.75,0.75/4.75,5.75/1.5} {
        \draw[auxiliary] (\x,\y) -- ++(20,0);
      }
      \foreach \x/\y in {0/2.75,0.75/4.75,0/0,4/4.75,2/0,5.75/1.5} {
        \draw[auxiliary] (\x,\y) -- ++(-20,-20);
      }
      \foreach \x/\y in {0/0,2/0} {
        \draw[minline] (\x,\y) -- ++(0,20);
        \draw[minline] (\x,\y) -- ++(-20,-20);
      }
      \draw[minline] (0,0) -- ++(20,0);
      \draw[minline] (5.75,1.5) -- ++(0,20);
      \draw[minline] (5.75,1.5) -- ++(20,0);
      \draw[minline] (5.75,1.5) -- ++(-20,-20);
    \end{scope}
    \foreach \x/\y/\i in {0/4/1,2/2.75/2,5.75/4.75/3,4.25/0/4} {
      \node[intersectionpt] at (\x,\y) {};
      \node[lab, anchor=south east]
        at ({\x-0.07},{\y+0.07}) {$u^{\i}$};
    }
  \end{scope}

  \begin{scope}[shift={(10.4,0)}]
    \begin{scope}
      \clip (-1.4,-1.05) rectangle (7.8,5.7);
      \foreach \x/\y in {-0.45/0,0.15/0,0.82/0,1.08/0,2.2/0,2.87/0,3.15/0} {
        \draw[secantB] (\x,\y) -- ++(0,20);
        \draw[secantB] (\x,\y) -- ++(-20,-20);
      }
      \draw[secantB] (-0.45,0) -- ++(20,0);
      \foreach \x/\y in {4.62/0.62,5.16/1.16,6.12/2.12,
                         6.38/2.38,7.24/3.24} {
        \draw[secantR] (\x,\y) -- ++(0,20);
        \draw[secantR] (\x,\y) -- ++(20,0);
      }
      \draw[secantR] (7.24,3.24) -- ++(-20,-20);
    \end{scope}
    \foreach \x/\y in {-0.45/0,0.15/0,0.82/0,1.08/0,2.2/0,2.87/0,3.15/0} {
      \node[apexB] at (\x,\y) {};
    }
    \foreach \x/\y in {4.62/0.62,5.16/1.16,6.12/2.12,
                       6.38/2.38,7.24/3.24} {
      \node[apexR] at (\x,\y) {};
    }
    \node[font=\small,fill=white,inner sep=1pt,
          text=RoyalBlue!85!black] at (1.65,0) {$\cdots$};
    \node[font=\small,fill=white,inner sep=1pt,
          text=BrickRed!85!black,rotate=45] at (5.64,1.64) {$\cdots$};
    \node[intersectionpt] at (4,0) {};
  \end{scope}
\end{tikzpicture}
  \caption{Overlaps of min-tropical lines in \(\TPfin^2\).
  Left: overlaps in the auxiliary arrangement \(\mathcal B(\u)\).
  Right: larger overlapping families.}
  \label{fig:auxiliary-overlap}
\end{figure}

\medskip

Note that in \(\TPfin^{d}\), the points \((x_1,\ldots,x_{d+1})\) and \((y_1,\ldots,y_{d+1})\) are the same if and only if the values $x_i-y_i$ are all equal for $i \in [d+1]$. We can therefore identify \(\TPfin^{d}\) with \(\R^d\) by the chart \(x_{d+1}=0\): the vector \(x\in\R^d\) stands for the class of \((x_1,\ldots,x_d,0)\) in \(\TPfin^{d}\). For every vector \(v\in\R^d\) we write \(v_{d+1}\coloneqq 0\), so that \(v_i\) is defined for all \(i\in[d+1]\). Here is our construction (illustrated in \Cref{fig:lower-cell-count-unsigned} for $d=2$):

\begin{proposition}\label{prop:unsigned-local-product-grid}
  For every \(m\geq1\) there exist \(m\)-element sets \(A_s^j\subset\R^{d}\), for \(j,s\in[d]\), such that:
  \begin{itemize}
  \item[(i)] $\bigcup_{j,s\in[d]} A_s^j$ is a set of \(d^2m\) points in max-general position,
  \item[(ii)] for every \(j\in[d]\) and every \(\p=(p^1,\ldots,p^d)\in A_1^j\times\cdots\times A_d^j\), there exists some \(c_j(\p)\in (-1,1)\) such that the min-tropical hyperplane spanned by \(\p\) intersects $(-1,1)^d$ in $\left\{x\in(-1,1)^d\colon x_j=c_j(\p)\right\}$,
  \item[(iii)] for every \(j\in[d]\), as \(\p\) ranges over \(A_1^j\times\cdots\times A_d^j\), the \(m^d\) numbers \(c_j(\p)\) are pairwise distinct.
  \end{itemize}
\end{proposition}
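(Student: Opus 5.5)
The plan is to prescribe, for each $j$, the apex $v$ of the desired min-tropical hyperplane $\Hmin(v)$. Then place the points of $A_s^j$ on suitable facets of $\Hmin(v)$, so that $v$ is forced by incidences and its $j$th coordinate is a sum of independent contributions from each $A_s^j$.

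\textbf{Shape of the target hyperplane.} Work in the chart $x_{d+1}=0$. Suppose $v\in\R^d$ has $v_j=c\in(-1,1)$ and $v_k\le -M$ for all $k\in[d]\setminus\{j\}$, with $M\gg 1$. Let $x\in(-1,1)^d$. Every term $x_k-v_k$ with $k\neq j$ is then at least $M-1>2$. Hence $\min_i(x_i-v_i)$ is attained among $x_j-c$ and $0$, the latter being the $(d+1)$st term. It is attained twice exactly when $x_j=c$. So $\Hmin(v)\cap(-1,1)^d=\{x\colon x_j=c\}$, which is (ii) with $c_j(\p)=v_j$.

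\textbf{Construction by a chain of facets.} For fixed $j$, list $[d]\setminus\{j\}$ as $k_2,\dots,k_d$, and set $k_1\coloneqq d+1$ and $k_{d+1}\coloneqq j$. I would take every point of $A_s^j$ on the facet $\Fmin_{k_s k_{s+1}}(v)$ of the intended hyperplane. The equality part of this facet condition reads $v_{k_{s+1}}-v_{k_s}=p^s_{k_{s+1}}-p^s_{k_s}\eqqcolon\delta(p^s)$. Telescoping gives
\[
  v_j=\sum_{s=1}^d\delta(p^s).
\]
Choose the increments as follows:
\begin{itemize}
  \item $\delta=-M+\epsilon^1_a$ on $A_1^j$;
  \item $\delta=\epsilon^s_a$ on $A_s^j$ for $2\le s\le d-1$;
  \item $\delta=M+\epsilon^d_a$ on $A_d^j$.
\end{itemize}
Here $a\in[m]$ and $\epsilon^s_a\coloneqq a\,(2m)^{-s}/2$, a base-$2m$ digit pattern. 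This makes $v_{k_2},\dots,v_{k_d}\approx -M$ and $v_j=\sum_s\epsilon^s_{a_s}\in(-1,1)$. Distinct tuples give distinct sums, because the digits occupy separate scales; this is (iii). All remaining coordinates of $p^s$, other than $k_s$ and $k_{s+1}$, I would set very large (say $\ge 3M$). Then the inequality part of the facet condition, that the common value is the minimum, holds automatically. Finally, use different far-apart offsets for the different blocks $j$ so that all $d^2m$ points are distinct.

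\textbf{Uniqueness and general position.} For each $\p$, the vector $v$ above contains $p^1,\dots,p^d$. If $\bigcup A_s^j$ is in max-general position, then by \cref{lem:uniqueCramer} and the definition of the spanned hyperplane, $\Hmin(v)$ is the min-tropical hyperplane spanned by $\p$. The constructed points are highly non-generic, however, since many coordinates are equal. So I would perturb all $d^2m$ points by a tiny amount using \cref{lem:generic-extension-existence} (with empty $\u$ and $k=d^2m$) to reach max-general position. The spanned apex $\operatorname{cr}$ varies continuously, being piecewise linear, on the generic locus near the configuration. The statements ``$v_j\in(-1,1)$, $v_k<-M+1$ for $k\neq j$'' and ``the $m^d$ values $c_j$ are pairwise distinct'' are strict, finitely many open conditions, so they survive small perturbations.

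\textbf{Main obstacle.} The step I expect to be delicate is this continuity/stability argument. Before perturbing, $\p$ need not span a unique hyperplane, so I would instead argue directly after perturbation. Keep each perturbed point in the relative interior of its intended facet region, with the large coordinates still dominating. Then the apex through the perturbed points is still given by the same telescoping linear formula, so it is a small perturbation of the unperturbed $v$. Uniqueness then follows from \cref{lem:uniqueCramer}.
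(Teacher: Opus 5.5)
Your proposal is correct and is essentially the paper's argument. The paper likewise fixes a reference apex whose $j$th coordinate is $0$ and whose other affine coordinates are very negative, namely $\bar u^j=4(e_j+e_r)$. It places the points of $A_s^j$ near the relative interiors of the facets along a path between $j$ and $d+1$, so that $c_j(\p)$ is a telescoping sum of one coordinate difference per point. It then checks the facet inequalities with enough slack to survive small perturbations (\cref{lem:unsigned-local-coordinate-slices}), and perturbs into max-general position. The only real difference is that you obtain (iii) from an explicit base-$2m$ digit pattern, where the paper uses the generic-avoidance argument of \cref{lem:unsigned-distinct-slices}.

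Two small slips need fixing:
\begin{itemize}
\item For $d=1$, your first and last bullets both apply to $A_1^1$; you want $\delta=\epsilon^1_a$ there.
\item In the chart $x_{d+1}=0$, the coordinate $d+1$ of $p^s$ is pinned at $0$. So for $s\ge2$ it is the two facet coordinates $p^s_{k_s},p^s_{k_{s+1}}$ that must be pushed far down, rather than the remaining coordinates pushed far up. Equivalently, read your condition in homogeneous coordinates.
\end{itemize}
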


\begin{figure}[t]
  \centering
  \begin{tikzpicture}[
    x=0.85cm, y=0.85cm,
    >=latex,
    line cap=round, line join=round,
    font=\small,
    secantB/.style={RoyalBlue!50,           line width=0.45pt},
    secantR/.style={BrickRed!55!white,      line width=0.45pt},
    clusterB/.style={RoyalBlue!80!black, dashed, line width=0.55pt},
    clusterR/.style={BrickRed!85!black, dashed, line width=0.55pt},
    apexB/.style={circle, fill=RoyalBlue!85!black, inner sep=0pt, minimum size=2.0pt},
    apexR/.style={circle, fill=BrickRed!85!black,  inner sep=0pt, minimum size=2.0pt},
    ptlabel/.style={font=\footnotesize, fill=white, inner sep=1pt, outer sep=2pt},
    clusterlabel/.style={font=\scriptsize, fill=white, inner sep=0.7pt, outer sep=1pt},
    zoombox/.style={dashed, black!55, line width=0.6pt},
  ]

  \begin{scope}
    \clip (-2.6,-2.6) rectangle (2.8,1.7);

    \foreach \ax/\ay in {0.91/2.12, 1.13/2.03, 1.03/1.89} {
      \foreach \bx/\by in {-1.13/1.08, -0.88/1.03, -1.04/0.88} {
        \pgfmathsetmacro{\ux}{-(\by + \ax - \ay)}
        \pgfmathsetmacro{\uy}{-\by}
        \draw[secantB] (\ux,\uy) -- (3.6,\uy);
        \draw[secantB] (\ux,\uy) -- (\ux,3.4);
        \draw[secantB] (\ux,\uy) -- ({\ux-2.5},{\uy-2.5});
      }
    }

    \foreach \ax/\ay in {2.12/0.89, 1.88/1.04, 2.04/1.13} {
      \foreach \bx/\by in {0.89/-1.11, 1.13/-0.91, 0.97/-1.14} {
        \pgfmathsetmacro{\ux}{-\bx}
        \pgfmathsetmacro{\uy}{-(\bx - \ax + \ay)}
        \draw[secantR] (\ux,\uy) -- (3.6,\uy);
        \draw[secantR] (\ux,\uy) -- (\ux,3.4);
        \draw[secantR] (\ux,\uy) -- ({\ux-2.5},{\uy-2.5});
      }
    }
  \end{scope}

  \draw[clusterB] (-1.18,-2.18) rectangle (-0.82,-1.82);
  \foreach \ax/\ay in {-0.91/-2.12, -1.13/-2.03, -1.03/-1.89}
    \node[apexB] at (\ax,\ay) {};
  \node[RoyalBlue, clusterlabel, anchor=north] at (-1, -2.22) {$A_1^1$};

  \draw[clusterB] (0.82,-1.18) rectangle (1.18,-0.82);
  \foreach \bx/\by in {1.13/-1.08, 0.88/-1.03, 1.04/-0.88}
    \node[apexB] at (\bx,\by) {};
  \node[RoyalBlue, clusterlabel, anchor=west] at (1.25, -1) {$A_2^1$};

  \draw[clusterR] (-2.18,-1.18) rectangle (-1.82,-0.82);
  \foreach \ax/\ay in {-2.12/-0.89, -1.88/-1.04, -2.04/-1.13}
    \node[apexR] at (\ax,\ay) {};
  \node[BrickRed!85!black, clusterlabel, anchor=east] at (-2.25, -1) {$A_1^2$};

  \draw[clusterR] (-1.18,0.82) rectangle (-0.82,1.18);
  \foreach \bx/\by in {-0.89/1.11, -1.13/0.91, -0.97/1.14}
    \node[apexR] at (\bx,\by) {};
  \node[BrickRed!85!black, clusterlabel, anchor=east] at (-1.25, 1) {$A_2^2$};

  \draw[zoombox] (-0.38,-0.40) rectangle (0.52,0.50);
  \node[ptlabel, anchor=south west] at (0.55,0.38) {$\mathcal{W}$};

\end{tikzpicture}
  \caption{The product grid of \cref{prop:unsigned-local-product-grid} for
  \(d=2\) and \(m=3\). The finite sets \(A_1^1,A_2^1\) (blue) and
  \(A_1^2,A_2^2\) (red) form one pair for each coordinate; the lines spanned
  by selections from \(A_1^1\times A_2^1\) and from \(A_1^2\times A_2^2\)
  intersect \((-1,1)^2\) in \(m^2\) vertical and \(m^2\) horizontal segments,
  forming the product grid.}
  \label{fig:lower-cell-count-unsigned}
\end{figure}

Before proving the proposition, we derive the following consequence.

\begin{corollary}\label{cor:cell-count-unsigned}
  Let \(d\geq1\) be fixed. For every \(t\geq d^2\), there is a \(t\)-point
  configuration \(\a\) in max-general position in \(\TPfin^d\) whose auxiliary
  arrangement \(\mathcal B(\a)\) has \(\Omega(t^{d^2})\) full-dimensional
  cells.
\end{corollary}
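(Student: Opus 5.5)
We derive the corollary from \cref{prop:unsigned-local-product-grid} by a grid-counting argument inside the box $(-1,1)^d$. Given $t\geq d^2$, set $m\coloneqq\lfloor t/d^2\rfloor\geq1$. Take the sets $A_s^j$ from the proposition and let $\a$ list the $d^2m$ points of $\bigcup_{j,s}A_s^j$ in any order. If $t>d^2m$, append $t-d^2m<d^2$ further points chosen by \cref{lem:generic-extension-existence}, so that the whole $t$-point sequence stays in max-general position. The extra points only add hyperplanes to $\mathcal B(\a)$, which can only refine cells. We therefore only need to count cells of the arrangement coming from the first $d^2m$ points, and then check that adding hyperplanes does not decrease the count.

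Fix $j\in[d]$ and $\p\in A_1^j\times\cdots\times A_d^j$. The $d$ points of $\p$ are pairwise distinct and belong to $\a$, so they form a $d$-subsequence $I$ of $\a$. Since $\a$ is in max-general position, the min-tropical hyperplane spanned by $\p$ is exactly $\Hmin(x_I)\in\mathcal B(\a)$; here we use \cref{lem:uniqueCramer} and the discussion of spanned hyperplanes. By (ii), this hyperplane meets $(-1,1)^d$ in the slab wall $\{x_j=c_j(\p)\}$. By (iii), for each $j$ we get $m^d$ pairwise distinct values $c_j(\p)\in(-1,1)$. These values cut $(-1,1)$ in direction $j$ into $m^d+1$ open intervals. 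Taking products over $j\in[d]$ gives $(m^d+1)^d$ open boxes inside $(-1,1)^d$.

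Two distinct boxes differ in some coordinate $j$, so they are separated by a wall $\{x_j=c_j(\p)\}$. Points on the two sides of this wall lie in different sectors of the corresponding $\Hmin(x_I)$: near the wall, inside the box, the hyperplane is just that affine piece, and its two sides are different sectors. Hence points in different boxes have different types with respect to $\mathcal B(\a)$. Each box is open and nonempty, so it contains a full-dimensional cell, and distinct boxes contain distinct cells. Adding more hyperplanes, whether from the extra points or from subsets $I$ not of product form, preserves the fact that types differ, since the type is a refinement. This gives at least $(m^d+1)^d\geq m^{d^2}=\Omega(t^{d^2})$ full-dimensional cells for fixed $d$.

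The only delicate point is the claim that the two sides of a wall lie in different sectors. A tropical hyperplane meeting an open box exactly in a relatively open piece of an affine hyperplane separates the box into two components, and both lie in the complement of the hyperplane. Sectors are the closures of the complementary regions, with one region per sector. It remains to check that these two components lie in different sectors. If they lay in the same open sector $\{x_i-u_i<x_k-u_k\ \forall k\neq i\}$, which is convex, the segment joining them would avoid the hyperplane; but it crosses the wall. The real substance is (ii) and (iii), which we take from the proposition.
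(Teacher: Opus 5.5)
Your proposal is correct and follows essentially the same route as the paper: take the sets from \cref{prop:unsigned-local-product-grid} with $m=\lfloor t/d^2\rfloor$, note that the $m^d$ walls $x_j=c_j(\p)$ in each direction cut $(-1,1)^d$ into $(m^d+1)^d$ boxes, which are pairwise separated by hyperplanes of $\mathcal B(\a)$, and pad to $t$ points with \cref{lem:generic-extension-existence}. Your extra justifications fill in details the paper leaves implicit, namely that the hyperplane spanned by $\p$ is exactly $\Hmin(x_I)$, and the convexity argument showing that opposite sides of a wall lie in different open sectors; the only slip is cosmetic, since a box \emph{meets} (rather than contains) a full-dimensional cell, which does not affect the count.
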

\begin{proof}
  Let us first consider the case where $t$ writes \(t=d^2m\) for some integer \(m\geq1\). Let \(A_s^j\) be the sets constructed in \cref{prop:unsigned-local-product-grid}. For every \(j\in[d]\), the \(m^d\) selections \(\p\in A_1^j\times\cdots\times A_d^j\) determine \(m^d\) min-tropical hyperplanes that cut $(-1,1)^d$ along \(m^d\) parallel affine hyperplanes \(x_j=c_j(\p)\). These cuts for all $j \in [d]$ decompose $(-1,1)^d$ into \((m^d+1)^d\) boxes.

  Let \(\a\) be any ordering of the \(t\) points of \(\bigcup_{j,s\in[d]}A_s^j\) and note that $\a$ is in max-general position. Any two boxes are separated by a min-tropical hyperplane spanned by some $d$ points from $\a$, and therefore are contained in different cells of the auxiliary arrangement \(\mathcal B(\a)\). It follows that \(\mathcal B(\a)\) has at least $(m^d+1)^d\geq m^{d^2}=\Omega(t^{d^2})$  full-dimensional cells.

  Now let \(t\geq d^2\) be arbitrary, and put \(m\coloneqq\lfloor
  t/d^2\rfloor\), so that \(t/2\leq t_0=d^2m\leq t\). Let \(\a_0\) be the
  \(t_0\)-point configuration constructed above for this \(m\). By
  \cref{lem:generic-extension-existence}, we may extend \(\a_0\) to a
  \(t\)-point configuration \(\a\) in max-general position. Since
  \(\mathcal B(\a)\) contains all hyperplanes of \(\mathcal B(\a_0)\), it
  still refines the grid and has \(\Omega(t_0^{d^2})=\Omega(t^{d^2})\)
  full-dimensional cells.
\end{proof}

\subsubsection[Geometric intuition of the proof of the local product-grid proposition]{Geometric intuition of the proof of \cref{prop:unsigned-local-product-grid}}

For the sake of the presentation we put $r=d+1$ throughout.

\medskip

We construct the sets \(A_1^j,\ldots,A_d^j\) independently for each $j \in [d]$. To this end, we note that near a point of the relative interior of the facet \(\Fmin_{jr}(u)\) of some min-tropical hyperplane \(\Hmin(u)\), that tropical hyperplane coincides with the affine hyperplane \(x_j-x_r=u_j-u_r\); see \cref{fig:facet-local}. In the chart \(x_r=0\), this is the affine hyperplane \(x_j=u_j-u_r\), whose position determines the apex difference \(u_j-u_r\) and vice-versa. This difference, in turn, can be controlled by the spanning points (suitably chosen).  A point \(p\) on the facet \(\Fmin_{ab}(u)\) satisfies \(p_a-u_a=p_b-u_b\), so it fixes the difference \(u_a-u_b=p_a-p_b\).

\begin{figure}[h]
  \centering
  \begin{tikzpicture}[
    x=0.9cm, y=0.9cm,
    >=latex,
    line cap=round, line join=round,
    font=\small,
    lineB/.style={RoyalBlue, line width=1pt},
    affine/.style={BrickRed!80!black, line width=0.8pt, dashed},
    apex/.style={circle, fill=RoyalBlue!85!black, inner sep=0pt, minimum size=3.4pt},
    pt/.style={circle, fill=black!85, inner sep=0pt, minimum size=3pt},
    nbhd/.style={draw=black!55, line width=0.6pt, densely dotted,
                 fill=Goldenrod!15},
    lab/.style={font=\scriptsize, fill=white, inner sep=1pt, outer sep=1pt},
  ]

  \draw[nbhd] (-0.4,0.9) rectangle (0.4,1.7);

  \draw[affine] (0,-2.0) -- (0,2.4);

  \draw[lineB] (0,0) -- (0,2.4);
  \draw[lineB] (0,0) -- (2.6,0);
  \draw[lineB] (0,0) -- (-1.9,-1.9);

  \node[apex] at (0,0) {};
  \node[RoyalBlue, lab, anchor=north west] at (0.05,-0.05) {$u$};

  \node[pt] at (0,1.3) {};
  \node[lab, anchor=west] at (0.06,1.3) {$x$};
  \node[lab, anchor=south west] at (0.42,1.62) {$U$};

  \node[RoyalBlue, lab, anchor=east] at (-0.08,2.2) {$\Fmin_{13}(u)$};
  \node[RoyalBlue, lab, anchor=south] at (2.1,0.05) {$\Fmin_{23}(u)$};
  \node[RoyalBlue, lab, anchor=south east] at (-1.25,-1.15) {$\Fmin_{12}(u)$};

  \node[BrickRed!80!black, lab, anchor=west] at (0.08,-1.8)
    {$x_1-x_3=u_1-u_3$};

\end{tikzpicture}
  \caption{In a small neighborhood \(U\) of a point \(x\) of the relative
  interior of \(\Fmin_{13}(u)\), the min-tropical line \(\Hmin(u)\)
  coincides with the affine line \(x_1-x_3=u_1-u_3\) (dashed).}
  \label{fig:facet-local}
\end{figure}

\medskip

So we fix some point $\bar u^j$ such that the facet $\Fmin_{jr}(\bar u^j)$ intersects $(-1,1)^d$. We then fix some arbitrary ordering \(j=i_0,i_1,\ldots,i_d=r\) of \([r]\), that is, a path from \(j\) to \(r\) through all indices, and put \(p^s\) on the facet \(\Fmin_{i_{s-1}i_s}(\bar u^j)\) for \(s\in[d]\). For $s \in [d]$, the point \(p^s\) fixes one difference along the path $\bar u^j_{i_{s-1}}-\bar u^j_{i_s}=p^s_{i_{s-1}}-p^s_{i_s}$. Summing these \(d\) equations, the left-hand sides telescope to \(\bar u^j_{i_0}-\bar u^j_{i_d}=\bar u^j_j-\bar u^j_r\), and we obtain
\begin{equation}\label{eq:path-telescope}
  \bar u^j_j- \bar u^j_r=\sum_{s=1}^{d}\bigl(p^s_{i_{s-1}}-p^s_{i_s}\bigr).
\end{equation}
The right-hand side of~\eqref{eq:path-telescope} is a sum of \(d\) terms, and the \(s\)-th term depends only on \(p^s\). So if each \(p^s\) is perturbed into \(m\) candidates for that point, the \(m^d\) choices of \(\p\) give, locally, \(m^d\) affine hyperplanes of $\R^d$ parallel to $\{x_j = 0\}$. When the candidates are chosen generically, these affine hyperplanes are distinct.

\subsubsection[Proof of the local product-grid proposition]{Proof of \cref{prop:unsigned-local-product-grid}}

Recall that for the sake of the presentation we put $r=d+1$ throughout. Let \(O, e_1,\ldots,e_d\) be the standard orthonormal frame of \(\R^d\), and let \(e_r\coloneqq-(e_1+\cdots+e_d) \in \R^d \).

\medskip

\begin{remark}\label{r:affproj}
Throughout the proof, we identify $\R^d \simeq \{x_{d+1} = 0\} \subset\TPfin^d$ and all points in $\{x_{d+1} = 0\} \subset\TPfin^d$ are considered through their homogeneous coordinates with $x_{d+1}=0$, which are unique over the tropical semiring. So, for instance, $e_r$ represents the point \((0,\ldots,0,1)\) of \(\TPfin^d\).
\end{remark}

\medskip

For every \(j\in[d]\) we define $\bar u^j \coloneqq 4(e_j+e_r)$. Note that the origin $O$ lies in the relative interior of \(\Fmin_{jr}(\bar u^j)\) since \((O-\bar u^j)_j=(O-\bar u^j)_r=0\) and \((O-\bar u^j)_k=4\) for every \(k\notin\{j,r\}\). (Here we use Remark~\ref{r:affproj}.) In other words, locally around $O$ the min-tropical hyperplanes $\Hmin(\bar u^1), \ldots, \Hmin( \bar u^d)$ intersect like the coordinate hyperplanes of $\R^d$.

\medskip

Let us fix \(j\in[d]\) for now. We fix an ordering \(j=i_0,i_1,\ldots,i_d=r\) of \([r]\) (which depends on $j$ but we omit this dependency for the sake of readability) and for $s \in [d]$ we define $\bar p^{j,s}\coloneqq\bar u^j-4e_{i_{s-1}}-4e_{i_s}$. Put \(\rho\coloneqq1/(2r)\) and let
\(
  \mathcal V_s^j\coloneqq
  \{q\in\R^d\colon\|q-\bar p^{j,s}\|_\infty<\rho\},
\)
where \(\|\cdot\|_\infty\) denotes the sup-norm on \(\R^d\).

\begin{lemma}\label{lem:unsigned-local-coordinate-slices}
  Any sequence \(\p=(p^1,\ldots,p^d)\in\mathcal V_1^j\times\cdots\times\mathcal V_d^j\) in max-general position spans a min-tropical hyperplane whose intersection with \((-1,1)^d\) is nonempty and equal to $\left\{x\in(-1,1)^d \colon x_j=c_j(\p)\right\}$, where $c_j(\p)\coloneqq\sum_{s=1}^{d}\pth{p^s_{i_{s-1}}-p^s_{i_s}}$.
\end{lemma}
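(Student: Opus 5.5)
The plan is to write down the apex of the spanned min-tropical hyperplane explicitly, show that it lies close to \(\bar u^j\), and then read off its intersection with \((-1,1)^d\) by elementary inequalities. Throughout we work in the chart of Remark~\ref{r:affproj}. Write \(a_s\coloneqq i_{s-1}\) and \(b_s\coloneqq i_s\).

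\emph{Step 1: the reference configuration.} We have \(\bar u^j_j=\bar u^j_r=0\) and \(\bar u^j_k=-4\) for \(k\notin\{j,r\}\). Moreover, \(\bar p^{j,s}-\bar u^j=-4e_{a_s}-4e_{b_s}\). In homogeneous coordinates this vector equals \(-4\) at coordinates \(a_s\) and \(b_s\), and \(0\) elsewhere, up to an additive constant; this also holds when \(b_s=r\), since \(e_r\) adds \(4\) to every chart coordinate. So the minimum of \(\bar p^{j,s}-\bar u^j\) is attained exactly at \(\{a_s,b_s\}\), with a gap of \(4\) to the other coordinates. Hence \(\bar p^{j,s}\) lies in the relative interior of \(\Fmin_{a_sb_s}(\bar u^j)\).

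\emph{Step 2: construct the candidate apex \(v\).} Given \(\p\in\mathcal V_1^j\times\cdots\times\mathcal V_d^j\), define \(v\in\TPfin^d\) by \(v_r\coloneqq0\) and by the path relations \(v_{a_s}-v_{b_s}=p^s_{a_s}-p^s_{b_s}\) for \(s\in[d]\). Since \(i_0,\ldots,i_d\) is a path through all of \([r]\), these relations determine \(v\) uniquely.

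Next we bound the perturbation. Each difference \(p^s_{a_s}-p^s_{b_s}\) differs from \(\bar p^{j,s}_{a_s}-\bar p^{j,s}_{b_s}=\bar u^j_{a_s}-\bar u^j_{b_s}\) by less than \(2\rho\). Telescoping along the path from any \(k\) to \(r\) then gives \(|v_k-\bar u^j_k|<2d\rho=d/(d+1)<1\) for every \(k\).

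From this we check that \(p^s\in\Hmin(v)\). Coordinates \(a_s\) and \(b_s\) of \(p^s-v\) agree by construction. Every other coordinate of \(p^s-v\) is, up to the same additive constant, larger by at least \(4-\Oh(\rho)-2d\rho>0\), using the gap of \(4\) from Step~1. So \(p^s\in\Fmin_{a_sb_s}(v)\subset\Hmin(v)\).

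\emph{Step 3: identify \(\Hmin(v)\) as the spanned hyperplane.} The sequence \(\p\) is in max-general position. By \cref{lem:uniqueCramer} and the discussion of spanned tropical hyperplanes, \(\Hmin(\operatorname{cr}(-\p))\) is the unique min-tropical hyperplane containing all \(p^s\). Since \(p^s\in\Hmin(v)\) means \(v\in\Hmax(p^s)\) by the incidence duality~\eqref{eq:max-min-incidence-duality}, the point \(v\) lies in \(\bigcap_s\Hmax(p^s)=\{\operatorname{cr}(-\p)\}\). Hence \(v=\operatorname{cr}(-\p)\), and \(\Hmin(v)\) is the spanned hyperplane. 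Finally, \(c_j(\p)=v_j-v_r=v_j\) follows by summing the path relations.

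\emph{Step 4: intersect with the box.} Let \(x\in(-1,1)^d\) and set \(x_r=0\). We compare the coordinates of \(x-v\):
\begin{itemize}
\item for \(k\notin\{j,r\}\), since \(v_k<-4+1\), we get \(x_k-v_k>-1+3=2\);
\item for \(k=j\), since \(|v_j|<1\), we get \(x_j-v_j<2\);
\item for \(k=r\), we get \(x_r-v_r=0\).
\end{itemize}
So the minimum of \(x-v\) is never attained at an index outside \(\{j,r\}\). Therefore \(x\in\Hmin(v)\) if and only if \(x_j-v_j=0\), that is, \(x_j=c_j(\p)\). Nonemptiness of the intersection follows because \(|c_j(\p)|=|v_j|<1\).

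The only delicate point is the error accounting in Step~2: the perturbation must stay strictly below the gap of \(4\) in Step~1 and below the margins used in Step~4. The choice \(\rho=1/(2r)\) is exactly what makes \(2d\rho<1\). I would double-check the case \(b_s=r\), where only one chart coordinate of \(p^s\) enters the difference, so the error there is even smaller, at most \(\rho\).
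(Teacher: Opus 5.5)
Your proposal is correct and follows essentially the same route as the paper's proof. You define the apex by $v_r=0$ and the path relations, bound $\|v-\bar u^j\|_\infty<2d\rho<1$ by telescoping, and use the gap of $4$ at $\bar p^{j,s}$ in the relative interior of $\Fmin_{i_{s-1}i_s}(\bar u^j)$ to get $p^s\in\Hmin(v)$. You then invoke max-general position for uniqueness and read off the slice $x_j=c_j(\p)$ in $(-1,1)^d$; your explicit identification $v=\operatorname{cr}(-\p)$ via incidence duality and your coordinatewise estimates in Step~4 only spell out details the paper states more tersely.
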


\noindent
\Cref{fig:lower-base-config-unsigned} illustrates the lemma for \(d=2\) and \(j=1\).

\begin{figure}[t]
  \centering
  \begin{tikzpicture}[
    x=0.95cm, y=0.95cm,
    >=latex,
    line cap=round, line join=round,
    font=\small,
    lineB/.style={RoyalBlue, line width=1pt},
    faintB/.style={RoyalBlue!35, line width=0.6pt},
    localB/.style={RoyalBlue, line width=1.1pt},
    faintlocalB/.style={RoyalBlue!40, line width=0.8pt},
    neighB/.style={draw=RoyalBlue!70!black, fill=RoyalBlue!10,
                   line width=0.55pt},
    apex/.style={circle, fill=RoyalBlue!85!black, inner sep=0pt, minimum size=3.4pt},
    pointpt/.style={circle, fill=black!85, inner sep=0pt, minimum size=2.9pt},
    altpt/.style={circle, draw=black!85, fill=white, line width=0.5pt,
                  inner sep=0pt, minimum size=2.9pt},
    intpt/.style={circle, fill=Goldenrod, draw=black, line width=0.4pt,
                  inner sep=0pt, minimum size=4.2pt},
    ptlabel/.style={font=\scriptsize, fill=white, inner sep=1pt,
                    outer sep=1pt},
    panellab/.style={font=\itshape, anchor=south},
    boxB/.style={draw=black!55, line width=0.7pt, dashed},
    axis/.style={->, black!45, line width=0.45pt},
  ]

  \begin{scope}
    \node[panellab] at (0.35, 1.9) {(a) two choices for each point};

    \draw[neighB] (-1.3,-2.3) rectangle (-0.7,-1.7);
    \draw[neighB] (0.7,-1.3) rectangle (1.3,-0.7);

    \begin{scope}
      \clip (-2.7,-2.7) rectangle (3.7,1.6);
      \foreach \dd/\ee in {-0.2/-0.07, -0.2/0.07, 0.2/-0.07} {
        \pgfmathsetmacro{\ux}{\dd+\ee}
        \pgfmathsetmacro{\uy}{-1+\ee}
        \draw[faintB] (\ux,\uy) -- (3.7,\uy);
        \draw[faintB] (\ux,\uy) -- (\ux,1.6);
        \draw[faintB] (\ux,\uy) -- ({\ux-3},{\uy-3});
      }
      \draw[lineB] (0.27,-0.93) -- (3.7,-0.93);
      \draw[lineB] (0.27,-0.93) -- (0.27,1.6);
      \draw[lineB] (0.27,-0.93) -- (-2.73,-3.93);
    \end{scope}

    \draw[boxB] (-0.42,-0.42) rectangle (0.42,0.42);
    \node[ptlabel, anchor=south west] at (0.45,0.43) {$\mathcal{W}$};

    \node[intpt] at (0,0) {};
    \node[ptlabel, anchor=south east] at (-0.05,0.03) {$0$};

    \node[apex] at (0.27,-0.93) {};
    \node[RoyalBlue, ptlabel, anchor=north west] at (0.32,-0.98) {$u$};

    \node[altpt] at (-1.2,-2) {};
    \node[pointpt] at (-0.8,-2) {};
    \node[ptlabel, anchor=north] at (-1.00,-2.32) {$\mathcal{V}_1^1\ni p^1$};

    \node[altpt] at (1,-1.07) {};
    \node[pointpt] at (1,-0.93) {};
    \node[ptlabel, anchor=north] at (1.00,-1.32) {$\mathcal{V}_2^1\ni p^2$};

    \node[RoyalBlue, ptlabel, anchor=west] at (0.45,1.15) {$H^{\min}(u)$};
  \end{scope}

  \begin{scope}[shift={(7.5,-0.7)}]
    \node[panellab] at (0, 2.15) {(b) four slices inside $\mathcal{W}$};

    \draw[boxB] (-1.3,-1.3) rectangle (1.3,1.3);
    \node[ptlabel, anchor=north west] at (1.32,-1.3) {$\mathcal{W}$};

    \draw[axis] (-1.6,0) -- (1.65,0) node[anchor=west] {$x_1$};
    \draw[axis] (0,-1.55) -- (0,1.6) node[anchor=south] {$x_2$};

    \foreach \c in {-0.836,-0.402,0.402}
      \draw[faintlocalB] (\c,-1.3) -- (\c,1.3);
    \draw[localB] (0.836,-1.3) -- (0.836,1.3);

    \node[RoyalBlue, ptlabel, anchor=south] at (0.836,1.3) {$x_1=c_1(\p)$};
  \end{scope}

\end{tikzpicture}
  \caption{\Cref{lem:unsigned-local-coordinate-slices} for \(d=2\) and
  \(j=1\). (a)~Two choices of \(p^1\in\mathcal{V}_1^1\) and of
  \(p^2\in\mathcal{V}_2^1\), around the basic points \(\bar p^{1,1},\bar p^{1,2}\), and
  the four min-tropical lines they span; one of them, \(\Hmin(u)\), is drawn
  solid. (b)~These lines meet \(\mathcal{W}\) in four vertical segments
  \(\{x\in\mathcal W\colon x_1=c_1(\p)\}\), one for each choice of \((p^1,p^2)\).}
  \label{fig:lower-base-config-unsigned}
\end{figure}

\begin{proof}
  Let $\W \coloneqq (-1,1)^d$. Let \(u\) be the vector determined by \(u_r=0\) and, for $s\in[d]$, $u_{i_{s-1}}-u_{i_s}=p^s_{i_{s-1}}-p^s_{i_s}$.

\medskip
  
We first show that \(u\) is close to \(\bar u^j\). By the definitions of \(\bar u^j\) and \(\bar p^{j,s}\), we have $\bar u^j_{i_{s-1}}-\bar u^j_{i_s}=\bar p^{j,s}_{i_{s-1}}-\bar p^{j,s}_{i_s}$, for $s\in[d]$. Subtracting gives $(u-\bar u^j)_{i_{s-1}}-(u-\bar u^j)_{i_s} =(p^s-\bar p^{j,s})_{i_{s-1}}-(p^s-\bar p^{j,s})_{i_s}$. For every \(k\in[d]\), summing the preceding identities over
\(s=k,\ldots,d\) yields $(u-\bar u^j)_{i_{k-1}} - (u-\bar u^j)_{r} = \sum_{s=k}^{d}\left[(p^s-\bar p^{j,s})_{i_{s-1}}-(p^s-\bar p^{j,s})_{i_s}\right]$. Since \((u-\bar u^j)_r=0\), we obtain
\(
  |(u-\bar u^j)_{i_{k-1}}|\le 2d\rho
\) for every \(k\in[d]\).
Consequently, \(\|u-\bar u^j\|_\infty\le 2d\rho<1\).

\medskip

Next, we claim that the min-tropical hyperplane spanned by \(\p\) is \(\Hmin(u)\). To see this, let us rewrite $p^s-u=(p^s-\bar p^{j,s})+(\bar p^{j,s}-\bar u^j)+(\bar u^j-u)$. Now, observe that \(\bar p^{j,s}\) lies in the relative interior of \(\Fmin_{i_{s-1}i_s}(\bar u^j)\) since the \(i_{s-1}\)th and \(i_s\)th coordinates of \(\bar p^{j,s}-\bar u^j\) are equal and smaller by \(4\) than every other coordinate of that vector. (Here we again use Remark~\ref{r:affproj}.) Since \(\|(p^s-\bar p^{j,s})+(\bar u^j-u)\|_\infty\leq\|p^s-\bar p^{j,s}\|_\infty+\|\bar u^j-u\|_\infty<(2d+1)\rho<1\), the \(i_{s-1}\)th and \(i_s\)th coordinates of \(p^s-u\) are also smaller than all its other coordinates. Moreover, these two coordinates are equal, since \(p^s_{i_{s-1}}-u_{i_{s-1}}=p^s_{i_s}-u_{i_s}\). Thus \(p^s\in\Hmin(u)\). Since \(\p\) is in max-general position, the min-tropical hyperplane containing \(\p\) is unique, so it is \(\Hmin(u)\).

\medskip

  Finally, we show that \(\Hmin(u)\cap\mathcal W=\{x\in\mathcal W:x_j=c_j(\p)\}\). Let \(x\in\mathcal W\), so \(\|x\|_\infty<1\). Then $x-u=(O-\bar u^j)+x+(\bar u^j-u)$, and \(\|x+(\bar u^j-u)\|_\infty\leq\|x\|_\infty+\|\bar u^j-u\|_\infty<1+1=2\). The $j$th and $r$th coordinates of \(O-\bar u^j\) are smaller by \(4\) than
  all its other coordinates, so the minimum coordinate of \(x-u\) is the \(j\)th one or
  \(r\)th one. Consequently, \(x\in \Hmin(u)\) if and only if \(x_j-u_j=x_r-u_r\). Since
  \(x_r=0\), summing~$u_{i_{s-1}}-u_{i_s}=p^s_{i_{s-1}}-p^s_{i_s}$ gives $x_j=u_j-u_r =\sum_{s=1}^{d}\pth{p^s_{i_{s-1}}-p^s_{i_s}} =c_j(\p)$. As \(\bar u^j=4(e_j+e_r)\) and \((e_r)_j=-1\), we have \(\bar u^j_j=0\). Together
  with \(u_r=0\) and \(\|u-\bar u^j\|_\infty<1\), $|c_j(\p)|=|u_j|=|(u-\bar u^j)_j|<1$. Hence, the intersection $\W \cap \{x_j=c_j(\p)\}$ is non-empty.
\end{proof}

\begin{lemma}\label{lem:unsigned-distinct-slices}
  For every \(m\geq1\), there exist \(m\)-element sets
  \(A_s\subset\mathcal V_s^j\), \(s\in[d]\), such that
  the values \(c_j(\p)\), as \(\p\) ranges over
  \(A_1\times\cdots\times A_d\), are pairwise distinct.
\end{lemma}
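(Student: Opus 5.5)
The key observation is that \(c_j(\p)=\sum_{s=1}^{d}\phi_s(p^s)\) is additively separable, with \(\phi_s(q)\coloneqq q_{i_{s-1}}-q_{i_s}\) depending only on the \(s\)-th point. So it suffices to choose, for each \(s\in[d]\), an \(m\)-element set \(A_s\subset\mathcal V_s^j\) such that the values \(\phi_s(A_s)\) are distinct and the sumset map \((a_1,\ldots,a_d)\mapsto a_1+\cdots+a_d\) on \(\phi_1(A_1)\times\cdots\times\phi_d(A_d)\) is injective. We realize these values along a direction in which \(\phi_s\) varies.

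\textbf{Construction.} Fix \(s\in[d]\) and a small \(\varepsilon>0\). Since \(i_{s-1}\neq i_s\), at least one of them, say \(k_s\), lies in \([d]\); recall that the coordinate \(r\) is fixed to \(0\) by the chart. Set \(\sigma_s=+1\) if \(k_s=i_{s-1}\) and \(\sigma_s=-1\) otherwise. For \(a\in[m]\), put
\[
q^{s,a}\coloneqq \bar p^{j,s}+\sigma_s\,\varepsilon\, a\, m^{s-1} e_{k_s},
\]
so that \(\phi_s(q^{s,a})=\phi_s(\bar p^{j,s})+\varepsilon\, a\, m^{s-1}\). Then
\[
c_j(\p)=\sum_s\phi_s(\bar p^{j,s})+\varepsilon\sum_{s=1}^{d}a_s m^{s-1},
\]
and the map \((a_1,\ldots,a_d)\mapsto\sum_s (a_s-1)m^{s-1}\) is injective from \([m]^d\) (base-\(m\) expansion). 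Hence the \(m^d\) values \(c_j(\p)\) are pairwise distinct. Choosing \(\varepsilon<\rho/m^{d}\) keeps every \(q^{s,a}\) inside \(\mathcal V_s^j\).

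\textbf{Genericity.} The statement of this lemma only asks for distinct values. However, the intended use, via \cref{lem:unsigned-local-coordinate-slices} and \cref{prop:unsigned-local-product-grid}(i), also needs max-general position. This is the main obstacle, since the lattice-like choice above is very non-generic. To handle it, perturb: distinctness of the \(m^d\) values of \(c_j\) is an open condition, as it consists of finitely many strict inequalities \(c_j(\p)\neq c_j(\p')\) that are continuous in the points. Membership in the open sets \(\mathcal V_s^j\) is also open. By \cref{lem:generic-extension-existence}, applied to the whole tuple \((q^{s,a})_{s,a}\), possibly together with the sets already chosen for other indices \(j\), every sufficiently small open neighbourhood of the tuple contains a tuple in max-general position. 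Choosing that tuple yields sets \(A_s\) with all required properties simultaneously.

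Alternatively, one can skip the explicit construction and argue purely by genericity. For \(\p\neq\p'\) they differ in some slot \(s\), and \(c_j(\p)-c_j(\p')\) is a nonzero linear form in the coordinates of the points in that slot, namely \(\phi_s(p^s)-\phi_s(p'^s)\) plus terms from the other slots. The coincidences therefore lie in finitely many proper affine hyperplanes of the configuration space, and a generic choice in the open product of the \(\mathcal V_s^j\) avoids them, exactly as in the proof of \cref{lem:generic-extension-existence}. I would present this shorter argument, keeping the explicit base-\(m\) construction as a sanity check that each linear form is indeed nonzero.
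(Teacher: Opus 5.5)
Your proposal is correct: the shorter genericity argument you say you would present is exactly the paper's proof. Each coincidence $c_j(\p)=c_j(\p')$ is a nontrivial linear equation in the candidate coordinates, because the slots where the selections differ involve disjoint variables. Finitely many such proper hyperplanes cannot cover the open product $\prod_s(\mathcal V_s^j)^m$.

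Your explicit base-$m$ construction with $\varepsilon<\rho/m^d$ is also valid and gives a concrete witness. The max-general-position discussion belongs to the proof of \cref{prop:unsigned-local-product-grid}, not to this lemma, and the paper handles it there by the same perturbation you describe.
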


\begin{proof}
  Label the \(m\) candidate points in each \(\mathcal V_s^j\).
  For two distinct selections \(\p,\p'\), the equality
  \(c_j(\p)=c_j(\p')\) is a nontrivial linear equation
  in the candidate coordinates and therefore defines
  a proper hyperplane.
  Excluding these finitely many hyperplanes from the
  space of candidate points
  \(\prod_{s=1}^d(\mathcal V_s^j)^m\) leaves a nonempty open
  set. Any choice in this set gives the required sets
  \(A_1,\ldots,A_d\).
\end{proof}

We now finally conclude the proof that there exist arbitrarily large point sets whose auxiliary arrangement contains large grids.

\begin{proof}[Proof of \cref{prop:unsigned-local-product-grid}]
  For each \(j\in[d]\), we let \(\{\mathcal V_s^j\}_{s\in[d]}\) denote the sets constructed for \cref{lem:unsigned-local-coordinate-slices}, and pick in each $\mathcal V_s^j$ an $m$-element subset $A_s^j$ such that the values \(c_j(\p)\), as \(\p\) ranges over \(A_1^j\times\cdots\times A_d^j\), are pairwise distinct (here we use \cref{lem:unsigned-distinct-slices}). Sufficiently small perturbations of the points of \(\cup_{j, s\in [d]}A_s^j\) keep them in their sets \(\mathcal V_s^j\) and preserve, for each \(j\), the pairwise distinctness
  of the values \(c_j(\p)\), as \(\p\) ranges over \(A_1^j\times\cdots\times A_d^j\). By \cref{lem:generic-extension-existence}, applied to the empty sequence, we may therefore perturb all \(d^2m\) points into max-general position. This gives~(i) and~(iii), and~(ii) follows from \cref{lem:unsigned-local-coordinate-slices}.
\end{proof}

\subsection{Completing the proof of Theorem~\ref{thm:main-unsigned}}

We can now prove the lower bounds of Theorem~\ref{thm:main-unsigned}.

\begin{proposition}\label{prop:unsigned-lower-bound}
  As $n \to \infty$, for every fixed $d\geq 1$,
  \[\log_2 \#\CTT(n,d) \ge d^2\,n\log_2 n -d^2\,n\log_2\log_2 n -\Oh(n).\]
\end{proposition}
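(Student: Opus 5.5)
The plan is the ``batched'' strategy announced at the start of this section: fix one initial configuration $\a$ of $t$ points, then count its generic extensions by $k\coloneqq n-t$ further points. By \cref{cor:cell-count-unsigned}, for every $t\geq d^2$ there is a $t$-point configuration $\a$ in max-general position in $\TPfin^d$ whose auxiliary arrangement $\mathcal B(\a)$ has at least $c\,t^{d^2}$ full-dimensional cells, for some constant $c=c(d)>0$.

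For each sequence $(C_1,\ldots,C_k)$ of such cells we construct a generic extension. The product $C_1\times\cdots\times C_k$ of the interiors is a nonempty open subset of $(\TPfin^d)^k$. By \cref{lem:generic-extension-existence}, it contains a $k$-tuple $(v^1,\ldots,v^k)$ such that $\v=(a^1,\ldots,a^t,v^1,\ldots,v^k)$ is in max-general position. Points in the interiors of distinct full-dimensional cells of $\mathcal B(\a)$ have distinct types. Indeed, two cells are distinct exactly because some hyperplane of $\mathcal B(\a)$ puts them in different sectors, and interior points lie in a single sector of each hyperplane. This holds despite the possible non-genericity of $\mathcal B(\a)$, since cells are defined as the regions of constant type. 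Therefore distinct cell sequences give distinct type sequences, and \cref{lem:unsigned-extension} shows that the arrangements $\Amax(\v)$ have pairwise distinct combinatorial types. Each $\Amax(\v)$ is a sequence of $n$ max-tropical hyperplanes in general position, so
\[
\#\CTT(n,d)\;\geq\;\bigl(c\,t^{d^2}\bigr)^{n-t}.
\]

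It remains to optimize over $t$. The main tension is that $t$ must be large enough for $\log t\approx\log n$, yet $n-t$ must remain close to $n$. I would take $t\coloneqq\lceil n/\log_2 n\rceil$, which is at least $d^2$ for large $n$. Then
\[
\log_2 t\;\geq\;\log_2 n-\log_2\log_2 n
\qquad\text{and}\qquad
n-t\;\geq\;n-\frac{n}{\log_2 n}-1.
\]
Substituting into the bound gives
\[
\log_2\#\CTT(n,d)\;\geq\;(n-t)\bigl(d^2\log_2 t+\log_2 c\bigr)\;\geq\;d^2 n\log_2 n-d^2 n\log_2\log_2 n-\Oh(n).
\]
The error terms are accounted for as follows. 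The correction $t\cdot d^2\log_2 n$ coming from the $-t$ in $n-t$ is $\Oh(n)$. The term $n\log_2 c$ is $-\Oh(n)$. The cross term $t\,d^2\log_2\log_2 n$ has a favourable sign.

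I expect no real obstacle beyond this bookkeeping, since the geometric difficulties have already been absorbed into \cref{cor:cell-count-unsigned} and \cref{lem:unsigned-extension}. The only point needing care is the one addressed above: that distinct full-dimensional cells of a possibly degenerate $\mathcal B(\a)$ yield distinct type sequences.
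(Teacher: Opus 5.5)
Your proposal is correct and follows essentially the same route as the paper: take the $t$-point configuration of \cref{cor:cell-count-unsigned} with $t\approx n/\log_2 n$, extend it by $n-t$ generic points placed in arbitrary full-dimensional cells of $\mathcal B(\a)$, separate the resulting arrangements with \cref{lem:unsigned-extension}, and conclude from $\#\CTT(n,d)\geq (c\,t^{d^2})^{n-t}$. The only differences are cosmetic: you take $t=\lceil n/\log_2 n\rceil$ where the paper uses the floor, and you explain why interior points of distinct full-dimensional cells have distinct types, which the paper states without justification.
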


\begin{proof}
  For sufficiently large \(n\), set $t=\left\lfloor\frac{n}{\log_2 n}\right\rfloor$. By \cref{cor:cell-count-unsigned}, there is a \(t\)-point configuration \(\a\) in max-general position in \(\TPfin^d\) whose auxiliary
  arrangement \(\mathcal B(\a)\) has a set \(\mathcal C\) of at least
  \(c_d\,t^{d^2}\) full-dimensional cells, for some constant \(c_d>0\)
  depending only on \(d\). For each sequence of cells \(C_1,\ldots,C_{n-t}\in\mathcal C\), we choose \((v^1,\ldots,v^{n-t})\) with \(v^\ell\in C_\ell\) so that \((\a,v^1,\ldots,v^{n-t})\) is in max-general position. Distinct cells of \(\mathcal B(\a)\) give distinct types, so distinct cell sequences give distinct tuples \((T^{\mathcal B(\a)}(v^1),\ldots,T^{\mathcal B(\a)}(v^{n-t}))\). By \cref{lem:unsigned-extension}, the corresponding extensions have distinct labeled combinatorial types, so we obtain at least
  \(\card{\mathcal C}^{\,n-t}\) distinct types. Therefore $\card{\CTT(n,d)} \;\geq\; \card{\mathcal C}^{\,n-t} \;\geq\; \bigl(c_d\,t^{d^2}\bigr)^{n-t}$, and taking logarithms, using \(t=n/\log_2 n-\Oh(1)\), gives $\log_2\card{\CTT(n,d)} \geq d^2\,n\log_2 n -d^2\,n\log_2\log_2 n-\Oh(n)$ as claimed.
\end{proof}

\begin{corollary}\label{cor:lower-cmf-rt}
  For every fixed $r \geq 2$, as $n \to \infty$,
  \[\log_2 \#\CMFT(n,r) \ge (r-1)^2\,n\log_2 n - (r-1)^2\,n\log_2\log_2 n -\Oh(n).\]
\end{corollary}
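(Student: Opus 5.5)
The lower bound on $\#\CMFT(n,r)$ will be transferred from the lower bound on $\#\CTT(n,d)$ in \cref{prop:unsigned-lower-bound}, via the Fink--Rinc\'on identity \eqref{eq:ct-rt-cmf} and the inclusion chain of \cref{lem:cmf-inclusion}. The counting has already been done in the hyperplane setting, so the proof is mainly bookkeeping of the index shift $n\mapsto n-r$.

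Set $d\coloneqq r-1\geq 1$. For $n\geq r$, \eqref{eq:ct-rt-cmf} applied with $n-r$ hyperplanes gives
\[
  \#\CTT(n-r,\,r-1)=\#\CMFTbullet(n,r),
\]
and the first inequality of \cref{lem:cmf-inclusion} gives $\#\CMFTbullet(n,r)\leq\#\CMFT(n,r)$. Hence $\#\CMFT(n,r)\geq\#\CTT(n-r,d)$. Applying \cref{prop:unsigned-lower-bound} with $n'\coloneqq n-r$ in place of $n$, which is legitimate since $n'\to\infty$ as $n\to\infty$ with $r$ fixed, yields
\[
  \log_2\#\CMFT(n,r)\;\geq\; d^2\,n'\log_2 n' - d^2\,n'\log_2\log_2 n' - \Oh(n').
\]

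It remains to replace $n'$ by $n$. Write $\log_2 n'=\log_2 n+\log_2(1-r/n)=\log_2 n-\Oh(1/n)$. Then
\[
  n'\log_2 n' = n\log_2 n - r\log_2 n - \Oh(1) = n\log_2 n - \Oh(\log n).
\]
For the second term, $n'\log_2\log_2 n'\leq n\log_2\log_2 n$, which has the favourable sign because it enters with a minus. Finally $\Oh(n')=\Oh(n)$. All error terms are therefore absorbed into $-\Oh(n)$, and substituting $d=r-1$ gives the claimed bound.

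There is no real obstacle. The only point to check is that the direction of \cref{lem:cmf-inclusion} matches what we need: we use the pointed-to-unpointed inclusion $\CMFTbullet(n,r)\subset\CMFT(n,r)$, not the other inequality. We also need \eqref{eq:ct-rt-cmf} in the form with $n-r$ hyperplanes and matching fields of type $(n,r)$.
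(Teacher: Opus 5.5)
Your proof is correct and matches the paper's argument: you shift \eqref{eq:ct-rt-cmf} to get $\#\CTT(n-r,r-1)=\#\CMFTbullet(n,r)$, use the inclusion $\CMFTbullet(n,r)\subseteq\CMFT(n,r)$, and apply \cref{prop:unsigned-lower-bound} with $n-r$ in place of $n$. Your explicit handling of the shift (the $\Oh(\log n)$ loss in the leading term and the favourable sign of the $\log_2\log_2$ term) spells out what the paper summarizes as being absorbed into the linear error term.
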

\begin{proof}
  By \eqref{eq:ct-rt-cmf} and \(\CMFTbullet(n,r)\subseteq\CMFT(n,r)\), we have $\card{\CMFT(n,r)} \;\geq\; \card{\CMFTbullet(n,r)} \;=\; \card{\CTT(n-r,r-1)}$. We then apply \cref{prop:unsigned-lower-bound} with \(d=r-1\) and \(n-r\) in place of \(n\). Since \(r\) is fixed, this shift is absorbed in the linear error term.
\end{proof}

\section{Signed tropicalization}\label{sec:signed-}

We now introduce some signed tropical geometry in preparation for the proof of Theorem~\ref{thm:main-signed} in Sections~\ref{sec:upper-signed} and~\ref{sec:lower-signed}. We start with some background on signed tropicalization (Section~\ref{sec:signed-chirotopes}), needed for the proof of the upper bound of Theorem~\ref{thm:main-signed} (Section~\ref{sec:upper-signed}). In particular, we recall properties of signed tropical numbers and signed tropical determinants, allowing us to define signed tropical chirotopes. We also recall the definition of signed tropical hyperplanes and explain how to visualize them (Section~\ref{sec:viro-hyperplanes}). Determinants of signed tropical matrices capture log-limits of determinants of parametric families of real matrices. We outline in Section~\ref{sec:signed-real-limits} how this principle allows to envision signed tropical chirotopes as logarithmic limits of affine chirotopes.

\subsection{Signed tropical numbers and signed tropicalization of chirotopes}\label{sec:signed-chirotopes}

The set of \emph{signed tropical numbers} is $\strop \coloneqq (\{\pm 1\} \times
\R) \cup \{(0,-\infty)\}$. We identify $(+1,a)$ with $a$, $(-1,a)$
with $\ominus a$, and $(0,-\infty)$ with $-\infty$. An element
$(\varepsilon,a) \in \strop$ has \emph{sign} $\sgn((\varepsilon,a)) =
\varepsilon$ and \emph{modulus} $|(\varepsilon,a)| = a \in \R \cup
\{-\infty\}$. We write \(\tzero=(0,-\infty)\) and \(\tone=(+1,0)\)
for the tropical zero and multiplicative unit. We use the following
conventions for signed tropical sums and products:
\begin{itemize}
 \item The \emph{tropical sum} \(x_1\oplus\cdots\oplus x_n\) of a 
finite family of signed tropical numbers equals any term of maximal modulus,
provided all such terms have the same sign; otherwise,
the sum is \emph{undefined}. We say that a
signed tropical sum $x_1 \oplus x_2 \oplus \ldots \oplus x_n$ is
\emph{balanced}, which we write $x_1 \oplus x_2 \oplus \ldots \oplus
x_n \nabla \tzero$, when \(\max_{1\le i\le n}|x_i|\) is attained by terms of both signs,
or when all \(x_i=\tzero\).
\item The \emph{tropical product} of signed tropical numbers is defined by $(\varepsilon,a)\odot(\delta,b)\coloneqq(\varepsilon\delta,a+b)$, where the signs are multiplied ordinarily and the moduli are added, with the convention that \(a+(-\infty)=-\infty\).
\end{itemize}

\noindent
For a nonzero signed tropical number, the multiplicative inverse is \((\varepsilon,a)^{\odot-1}=(\varepsilon,-a)\). Inversion of vectors with nonzero coordinates is taken coordinatewise. The terminology ``balanced'' originates from~\cite{maxplus90b,AkianGaubertGuterman2014}, where it is shown
that the set of signed tropical numbers can be completed by a set of quasi-zeros, called \emph{balanced} elements,
to constitute a bona-fide semiring (with an everywhere defined addition), the \emph{symmetrized tropical semiring}.
Alternatively, the signed tropical number can be equipped with a multi-valued operation,
following a construction of Krasner~\cite{Krasner1983}, leading to a hyperfield structure~\cite{Viro2010}. Both approaches
are equivalent~\cite{AkianGaubertRowen2024}.

\medskip

The \emph{signed tropical determinant} of a signed tropical matrix $A \in \strop^{q \times q}$
is given by the signed tropical sum
\begin{align}
  \tdet(A) \coloneqq \bigoplus_{\sigma \in
  \Sym_q} \bigl(\sgn(\sigma),0\bigr) \odot \bigodot_{i=1}^q
  A_{i,\sigma(i)}\label{e-def-sdet}
  \end{align}
where $\Sym_q$ denotes the set of permutations of $[q]$ and
\(\sgn(\sigma)\in\{\pm1\}\) is the ordinary sign of \(\sigma\).
The coefficient \((\sgn(\sigma),0)\) equals \(\tone\) for an even
permutation and \(\ominus\tone\) for an odd permutation. We say that a square matrix \(A\in\strop^{q\times q}\) is \emph{singular} if \(\tdet(A)\) is balanced,
and \emph{non-singular} if
\(\tdet(A)\in\strop\setminus\{\tzero\}\). We say that a matrix
\(A\in\strop^{n\times r}\) is \emph{generic} if every
maximal square submatrix, that is, every \(r\times r\) submatrix, is
non-singular.

\begin{remark}
If the unsigned tropical matrix $|A|$ obtained by applying
the modulus map to every entry of a matrix $A$ is non-singular
in the sense of~\cref{sec:pointed-cmf}, then the signed tropical
matrix \(A\) is non-singular, but the converse implication
does not hold. Indeed, when $A$ is square, non-singularity in the unsigned
sense means that there is only one term of maximal modulus in the
sum~\eqref{e-def-sdet} whereas non-singularity in the signed sense
allows several terms of maximal modulus provided they all share the same sign.
\end{remark}

\medskip

Given a sequence \(\q=(q^1,\dots,q^n)\) with \(q^i\in\strop^d\), let \(\bar Q\in\strop^{n\times(d+1)}\) be the matrix whose \(i\)-th row is \((q^i,\tone)\). We call \(\bar Q\) the \emph{homogenized matrix} of \(\q\).
More generally, a \emph{matrix of} \(\q\) is any matrix whose \(i\)-th row
is a nonzero signed tropical scalar multiple of \((q^i,\tone)\). We say that \(\q\) is \emph{in min-general position} if every square
submatrix of \(\bar Q\) is non-singular. 
Unlike genericity of \(\bar Q\), this condition includes square submatrices of every order. In particular,
the \(1\times1\) submatrices force every coordinate of every \(q^i\) to be
nonzero. This condition is independent of the chosen row representatives and of
the order of the points.

\medskip

The \emph{signed tropical chirotope} of a sequence \(\q=(q^1,\ldots,q^n)\) of points in \(\strop^d\) in min-general position is the map \(\chi_{\q}\) that associates to every \((d+1)\)-tuple \(I=(i_1,\ldots,i_{d+1})\) of distinct indices in \([n]\) the sign
\[ \chi_{\q}(I)=\chi_{\q}(i_1, \ldots, i_{d+1}) = \sgn \pth{\tdet \pth{\begin{array}{cc} q^{i_1} & \tone\\ \vdots & \vdots\\ q^{i_{d+1}} & \tone \end{array}}}.\]
Since \(\tdet\) changes sign when two rows are swapped, \(\chi_{\q}\) is alternating, hence determined by its values on subsets \(I\in\binom{[n]}{d+1}\); here and below, we identify a subset of \([n]\) with the tuple of its elements in increasing order.
For a tuple \(I=(i_1,\ldots,i_k)\) of distinct indices in \([n]\), we write \(\q^I\coloneqq(q^{i_1},\ldots,q^{i_k})\).
We denote by \(\OTT(n,d)\) the set of signed tropical chirotopes, also called \emph{tropical labeled order types}, of \(n\)-point sequences in min-general position in \(\strop^d\).
Their interpretation as limits of real chirotopes is established in \cref{prop:tropical-real-realizability}.

\subsection{Signed tropical space and signed tropical hyperplanes}
\label{sec:viro-hyperplanes}

The multiplicative group \(\strop^\times=\strop\setminus\{\tzero\}\) acts on \(\strop^{d+1}\) by signed tropical multiplication \(\odot\).  We can thus define the \emph{$d$-dimensional projective signed tropical space} as \(\TP^d_{\pm}\coloneqq (\strop^{d+1}\setminus\{(\tzero,\ldots,\tzero)\})/\strop^\times\).  We also work in the finite part of the projective space, that is \(\TPpmfin^d\coloneqq(\strop^\times)^{d+1}/\strop^\times\). We can further identify each point of the affine chart \(x_{d+1}\ne\tzero\)  with its unique representative \(x=(x_1,\ldots,x_d,\tone)\), where \(\tone=(+1,0)\).

\medskip

\begin{figure}[h]
  \centering
  \begin{tikzpicture}[
    x=0.42cm, y=0.42cm,
    >=latex,
    line cap=round, line join=round,
    font=\small,
    axis/.style={<->, black!60, thin},
    bdry/.style={line width=1.7pt, black!82, line cap=round},
    thinedge/.style={black!30, thin},
    inflabel/.style={font=\scriptsize, fill=white, inner sep=1pt, text=black!55},
    ptlabel/.style={font=\footnotesize, fill=white, inner sep=1pt, outer sep=1pt},
    chartlabel/.style={font=\normalsize, text=black!80},
    axislabel/.style={font=\footnotesize, fill=white, inner sep=1.2pt, text=black!60},
    panel/.style={font=\itshape, text=black!75},
  ]

  \node[panel, anchor=south] at (0,5.7) {The four sign charts};

  \fill[RoyalBlue!8]   (0.7,0.7) rectangle (4.7,4.7);
  \draw[thinedge]      (0.7,0.7) rectangle (4.7,4.7);
  \draw[bdry] (4.7,0.7) -- (0.7,0.7) -- (0.7,4.7);
  \node[chartlabel] at (2.9,2.9) {$(+,+)$};
  \node[ptlabel] at (0.7,0.7) {$\tzero$};

  \fill[BrickRed!7]    (-4.7,0.7) rectangle (-0.7,4.7);
  \draw[thinedge]      (-4.7,0.7) rectangle (-0.7,4.7);
  \draw[bdry] (-4.7,0.7) -- (-0.7,0.7) -- (-0.7,4.7);
  \node[chartlabel] at (-2.9,2.9) {$(-,+)$};
  \node[ptlabel] at (-0.7,0.7) {$\tzero$};

  \fill[Goldenrod!13]  (-4.7,-4.7) rectangle (-0.7,-0.7);
  \draw[thinedge]      (-4.7,-4.7) rectangle (-0.7,-0.7);
  \draw[bdry] (-4.7,-0.7) -- (-0.7,-0.7) -- (-0.7,-4.7);
  \node[chartlabel] at (-2.9,-2.9) {$(-,-)$};
  \node[ptlabel] at (-0.7,-0.7) {$\tzero$};

  \fill[ForestGreen!8] (0.7,-4.7) rectangle (4.7,-0.7);
  \draw[thinedge]      (0.7,-4.7) rectangle (4.7,-0.7);
  \draw[bdry] (4.7,-0.7) -- (0.7,-0.7) -- (0.7,-4.7);
  \node[chartlabel] at (2.9,-2.9) {$(+,-)$};
  \node[ptlabel] at (0.7,-0.7) {$\tzero$};

  \draw[->, line width=1pt, black!70] (6.2,0) -- (9.3,0);
  \node[panel, anchor=south, text=black!70] at (7.75,0.35) {glue};
  \node[font=\scriptsize, text=black!60, anchor=north] at (7.75,-0.35)
    {$x_1{=}\tzero,\ x_2{=}\tzero$};

  \begin{scope}[shift={(16.2,0)}]
    \node[panel, anchor=south] at (0,5.7) {The signed tropical plane};

    \fill[RoyalBlue!8]   (0,0) rectangle (5.0,5.0);
    \fill[BrickRed!7]    (0,0) rectangle (-5.0,5.0);
    \fill[Goldenrod!13]  (0,0) rectangle (-5.0,-5.0);
    \fill[ForestGreen!8] (0,0) rectangle (5.0,-5.0);

    \draw[bdry] (-5.0,0) -- (5.0,0);
    \draw[bdry] (0,-5.0) -- (0,5.0);
    \draw[axis] (5.0,0) -- (5.7,0);
    \draw[axis] (-5.0,0) -- (-5.7,0);
    \draw[axis] (0,5.0) -- (0,5.7);
    \draw[axis] (0,-5.0) -- (0,-5.7);
    \node[inflabel] at (5.2,0.6)  {$\oplus\infty$};
    \node[inflabel] at (-5.2,0.6) {$\ominus\infty$};
    \node[inflabel] at (1.05,5.1)  {$\oplus\infty$};
    \node[inflabel] at (1.05,-5.0) {$\ominus\infty$};
    \node[ptlabel] at (0,0) {$\tzero$};

    \node[chartlabel] at (2.8,2.8)   {$(+,+)$};
    \node[chartlabel] at (-2.8,2.8)  {$(-,+)$};
    \node[chartlabel] at (-2.8,-2.8) {$(-,-)$};
    \node[chartlabel] at (2.8,-2.8)  {$(+,-)$};

    \node[axislabel, rotate=90, anchor=south] at (-0.42,2.0) {$x_1=\tzero$};
    \node[axislabel, anchor=north] at (2.2,-0.3) {$x_2=\tzero$};
  \end{scope}
\end{tikzpicture}
  \caption{The \emph{signed tropical space} $\strop^2$.
  \emph{Left:} four copies of \(\T^2\), indexed by
\(s\in\{\pm1\}^2\), drawn separately. The bold edges correspond to \(z_i=-\infty\), or equivalently
\(x_i=\tzero\). \emph{Right:} identifying boundary points according to \(\sim\) gives the signed tropical plane. All four copies meet at \((\tzero,\tzero)\).}
\label{fig:viro-plane}
\end{figure}

Omitting the last coordinate identifies our affine chart with
the \emph{signed tropical space} \(\strop^d\).
Consider~\(2^d\) copies of \(\T^d\), indexed by
\(s\in\{\pm1\}^d\). A labelled point \((s,z)\) represents
\(x\in\strop^d\) with coordinates
\[
  x_i=
  \begin{cases}
    (s_i,z_i), & z_i\in\R,\\
    \tzero,   & z_i=-\infty,
  \end{cases}
  \qquad i\in[d].
\]
When \(z_i=-\infty\), both choices \(s_i=+1\) and \(s_i=-1\)
give the same coordinate \(x_i=\tzero\).
We therefore glue these copies by
\[
  (s,z)\sim(t,z')
  \quad\Longleftrightarrow\quad
  z=z'
  \ \text{and}\
  s_i=t_i \text{ whenever } z_i\ne-\infty,
\]
giving $\pth{\{\pm1\}^d\times\T^d}/{\sim} \;\simeq\;\strop^d$. The case \(d=2\) is illustrated in \cref{fig:viro-plane}. For each \(s\in\{\pm1\}^d\), the \emph{sign chart} $s$ of \(\TPpmfin^d\) is \(\mathcal C_s\coloneqq
  \{x\in\strop^d:\sgn(x_i)=s_i\text{ for all }i\in[d]\} \simeq\R^d\),
where the identification is by taking moduli.
Thus the signed tropical space consists of the sign charts together with
the zero-coordinate loci \(x_i=\tzero\), \(i\in[d]\),
to which the charts are attached.

\medskip

The signed tropical space being introduced, we next present the signed counterpart of the material presented in Section~\ref{sec:unsigned-geometry}.

\medskip

Let \(w\in\TPpmfin^d\). The \emph{signed max-tropical hyperplane}
with apex \(w\) is
\[
  \Hsmax(w)\coloneqq
  \Bigl\{x\in\TP^d_{\pm}:
    \bigoplus_{j=1}^{r}w_j^{\odot-1}\odot x_j
    \;\nabla\;\tzero\Bigr\},
\]
that is, the locus where the tropical sum is balanced.
Equivalently, \(x\in\Hsmax(w)\) if and only if there exist
distinct indices \(i,j\in[r]\) satisfying
\[ |x_i|-|w_i| =|x_j|-|w_j|  =\max_{k\in[r]}(|x_k|-|w_k|) \quad \text{and} \quad \sgn(w_i)\sgn(x_i) =-\sgn(w_j)\sgn(x_j).\]
These conditions are independent of the chosen representatives:
rescaling \(w\) or \(x\) adds a common constant to all modulus
differences and multiplies all sign products by a common sign.

\begin{figure}[h]
  \centering
  \begin{tikzpicture}[
    x=0.42cm, y=0.42cm,
    >=latex, line cap=round, line join=round, font=\small,
    axis/.style={<->, black!60, thin},
    bdry/.style={line width=1.55pt, black!82, line cap=round},
    thinedge/.style={black!30, thin},
    lineB/.style={RoyalBlue, line width=1.15pt},
    ghost/.style={black!34, line width=0.75pt, dashed},
    bendpt/.style={circle, fill=RoyalBlue, inner sep=0pt, minimum size=2.8pt},
    inflabel/.style={font=\scriptsize, fill=white, inner sep=1pt, text=black!55},
    ptlabel/.style={font=\scriptsize, fill=white, inner sep=0.8pt, outer sep=1pt},
    chartlabel/.style={font=\footnotesize, fill=white, inner sep=0.8pt,
      text=black!80},
    panel/.style={font=\itshape, text=black!75},
  ]

  \node[panel, anchor=south] at (0,5.7) {Four sign charts};

  \fill[RoyalBlue!8] (0.7,0.7) rectangle (4.7,4.7);
  \draw[thinedge] (0.7,0.7) rectangle (4.7,4.7);
  \draw[bdry] (4.7,0.7) -- (0.7,0.7) -- (0.7,4.7);
  \draw[lineB] (1.7,2.7) -- (3.7,4.7);
  \draw[lineB] (1.7,2.7) -- (0.7,2.7);
  \draw[ghost] (1.7,2.7) -- (1.7,0.7);
  \node[bendpt] at (1.7,2.7) {};
  \node[chartlabel] at (1.85,4.2) {$(+,+)$};
  \node[ptlabel] at (0.7,0.7) {$\tzero$};

  \fill[BrickRed!7] (-4.7,0.7) rectangle (-0.7,4.7);
  \draw[thinedge] (-4.7,0.7) rectangle (-0.7,4.7);
  \draw[bdry] (-4.7,0.7) -- (-0.7,0.7) -- (-0.7,4.7);
  \draw[ghost] (-1.7,2.7) -- (-3.7,4.7);
  \draw[lineB] (-1.7,2.7) -- (-0.7,2.7);
  \draw[lineB] (-1.7,2.7) -- (-1.7,0.7);
  \node[bendpt] at (-1.7,2.7) {};
  \node[chartlabel] at (-3.55,4.2) {$(-,+)$};
  \node[ptlabel] at (-0.7,0.7) {$\tzero$};

  \fill[Goldenrod!13] (-4.7,-4.7) rectangle (-0.7,-0.7);
  \draw[thinedge] (-4.7,-4.7) rectangle (-0.7,-0.7);
  \draw[bdry] (-4.7,-0.7) -- (-0.7,-0.7) -- (-0.7,-4.7);
  \draw[lineB] (-1.7,-2.7) -- (-3.7,-4.7);
  \draw[lineB] (-1.7,-2.7) -- (-1.7,-0.7);
  \draw[ghost] (-1.7,-2.7) -- (-0.7,-2.7);
  \node[bendpt] at (-1.7,-2.7) {};
  \node[chartlabel] at (-3.55,-4.2) {$(-,-)$};
  \node[ptlabel] at (-0.7,-0.7) {$\tzero$};

  \fill[ForestGreen!8] (0.7,-4.7) rectangle (4.7,-0.7);
  \draw[thinedge] (0.7,-4.7) rectangle (4.7,-0.7);
  \draw[bdry] (4.7,-0.7) -- (0.7,-0.7) -- (0.7,-4.7);
  \draw[ghost] (1.7,-2.7) -- (3.7,-4.7);
  \draw[ghost] (1.7,-2.7) -- (0.7,-2.7);
  \draw[ghost] (1.7,-2.7) -- (1.7,-0.7);
  \node[bendpt, fill=black!45] at (1.7,-2.7) {};
  \node[chartlabel] at (1.85,-4.2) {$(+,-)$};
  \node[ptlabel] at (0.7,-0.7) {$\tzero$};

  \draw[->, line width=1pt, black!70] (6.2,0) -- (9.3,0);
  \node[panel, anchor=south, text=black!70] at (7.75,0.35) {glue};
  \node[font=\scriptsize, text=black!60, anchor=north] at (7.75,-0.35)
    {$x_1{=}\tzero,\ x_2{=}\tzero$};

  \begin{scope}[shift={(16.2,0)}]
    \node[panel, anchor=south] at (0,5.7) {Glued signed tropical plane};

    \fill[RoyalBlue!8]   (0,0) rectangle (5.0,5.0);
    \fill[BrickRed!7]    (0,0) rectangle (-5.0,5.0);
    \fill[Goldenrod!13]  (0,0) rectangle (-5.0,-5.0);
    \fill[ForestGreen!8] (0,0) rectangle (5.0,-5.0);

    \draw[bdry] (-5.0,0) -- (5.0,0);
    \draw[bdry] (0,-5.0) -- (0,5.0);
    \draw[axis] (5.0,0) -- (5.7,0);
    \draw[axis] (-5.0,0) -- (-5.7,0);
    \draw[axis] (0,5.0) -- (0,5.7);
    \draw[axis] (0,-5.0) -- (0,-5.7);
    \node[inflabel] at (5.2,0.6)  {$\oplus\infty$};
    \node[inflabel] at (-5.2,0.6) {$\ominus\infty$};
    \node[inflabel] at (1.05,5.1)  {$\oplus\infty$};
    \node[inflabel] at (1.05,-5.0) {$\ominus\infty$};
    \node[ptlabel] at (0,0) {$\tzero$};

    \begin{scope}
      \clip (-5.45,-5.45) rectangle (5.45,5.45);
      \draw[lineB] (-1,2) -- (1,2);
      \draw[lineB] (-1,-2) -- (-1,2);
      \draw[lineB] (1,2) -- (4.45,5.45);
      \draw[lineB] (-1,-2) -- (-4.45,-5.45);
    \end{scope}

    \node[bendpt] at (1,2) {};
    \node[bendpt] at (-1,2) {};
    \node[bendpt] at (-1,-2) {};

    \node[chartlabel] at (3.55,0.95) {$(+,+)$};
    \node[chartlabel] at (-3.4,4.15) {$(-,+)$};
    \node[chartlabel] at (-4.0,-1.0) {$(-,-)$};
    \node[chartlabel] at (3.2,-3.2) {$(+,-)$};
  \end{scope}
\end{tikzpicture}
  \caption{The signed max-tropical line of
  \cref{ex:signed-max-tropical-line} (schematic).
  \emph{Left:} retained facets are solid blue; discarded facets are dashed.
  \emph{Right:} the glued line in the signed tropical plane.}
  \label{fig:signed-hyperplane-chart}
\end{figure}

\medskip

A signed max-tropical hyperplane in the signed tropical space can be visualized thanks to the following method. We consider each chart indexed by \(s\in\{\pm1\}^d\) in turn. By our convention, the last homogeneous coordinate is normalized to $\tone$, so we set $s_r=+1$. In the chart of sign \(s\in\{\pm1\}^d\), we draw the set of points~$x$ such that $\sgn(x) = s$ and $|x|$ belongs to the unsigned max-tropical hyperplane with apex \(|w|\). We then only keep the facets \(\Fmax_{ab}(|w|)\) defined in~\eqref{eq:tropical-facet} precisely when \(\sgn(w_a)s_a=-\sgn(w_b)s_b\). The facet equalities and inequalities extend to the chart boundaries by allowing modulus \(-\infty\) for zero coordinates, which cannot
attain the maximum. Gluing the retained pieces along these boundaries gives the affine part of \(\Hsmax(w)\) in the signed tropical space.

\begin{example}\label{ex:signed-max-tropical-line}
  The signed max-tropical line $\left\{ x\in\TP^2_{\pm}: \pth{(\ominus 1)\odot x_1} \ \oplus \ x_2 \ \oplus \ \pth{(\ominus 2)\odot x_3} \;\nabla\;\tzero \right\}$ has apex \(w=(\ominus 1,2,\ominus\tone)\), with modulus vector \(|w|=(1,2,0)\). In the finite part of chart \((s_1,s_2)\), the three terms have signs \((-s_1,s_2,-1)\) and the retained facets are 

\begin{center}
  \begin{tabular}{ccc}
    \toprule
    Chart & Term signs & Retained pairs \\
    \midrule
    \((+,+)\) & \((-,+,-)\) & \(\{1,2\},\ \{2,3\}\) \\
    \((-,+)\) & \((+,+,-)\) & \(\{1,3\},\ \{2,3\}\) \\
    \bottomrule
  \end{tabular}
  \qquad 
  \begin{tabular}{ccc}
    \toprule
    Chart & Term signs & Retained pairs \\
    \midrule
    \((-,-)\) & \((+,-,-)\) & \(\{1,2\},\ \{1,3\}\) \\
    \((+,-)\) & \((-,-,-)\) & None \\
    \bottomrule
  \end{tabular}
\end{center}
In \((+,-)\), all nonzero terms have the same sign, so the signed line
is empty there, even at the unsigned apex.
\Cref{fig:signed-hyperplane-chart} shows the facet selection in
all four charts and the line obtained after gluing.
\end{example}

\medskip

Let \(A\in\strop^{d\times(d+1)}\). For $j \in [d+1]$ let \(A_{\widehat j}\) be the square matrix obtained by deleting column \(j\).  Suppose that every \(A_{\widehat j}\) is non-singular.  The \emph{signed Cramer vector} \(\operatorname{cr}_{\pm}(A)\in(\strop^\times)^{d+1}\) is the vector with $j$th coordinate $[\operatorname{cr}_{\pm}(A)]_j\coloneqq ((-1)^{d+1+j},0)\odot\tdet(A_{\widehat j})$. The signed tropical Cramer rule~\cite[Cor.~4.19]{AkianGaubertGuterman2014} implies that
$\operatorname{cr}_{\pm}(A)$ is the unique nonzero solution
in $\strop^{d+1}$ of the homogeneous balance system
\(
  \left\{
    \bigoplus_{j=1}^{d+1} A_{ij}\odot y_j
    \;\nabla\;\tzero
  \right\}_{i\in[d]},
\)
up to multiplication by a nonzero signed tropical scalar. Hence, if we interpret the rows of $A$ as the coordinates of a point sequence \(\q=(q^1,\ldots,q^d)\) in \(\TPpmfin^d\), then the projective class of \(\operatorname{cr}_{\pm}(A)\) only depends on that point sequence. We call this projective class of \(\operatorname{cr}_{\pm}(A)\), that is this point in \(\TPpmfin^d\), the \emph{signed Cramer point} of $\q$ and denote it by \(\operatorname{cr}_{\pm}(\q)\in\TPpmfin^d\). The signed tropical Cramer rule therefore implies that $d$ points
in min-general position lie on a unique signed max-tropical hyperplane:

\begin{lemma}\label{lem:signed-cramer-intersection}
Let \(\q=(q^1,\ldots,q^d)\) be a point sequence in min-general
position in \(\TPpmfin^d\). Then
\(
  \Hsmax(\operatorname{cr}_{\pm}(\q)^{\odot-1})
\)
is the unique signed max-tropical hyperplane containing all points of \(\q\).
\end{lemma}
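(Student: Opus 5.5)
We will mirror the proof of \cref{lem:uniqueCramer}, replacing the unsigned tropical Cramer rule by its signed version~\cite[Cor.~4.19]{AkianGaubertGuterman2014}. The key step is a point--hyperplane reformulation. Let \(Q\in\strop^{d\times(d+1)}\) be a matrix of \(\q\), so its \(i\)-th row is a nonzero scalar multiple of \((q^i,\tone)\). For an apex \(w\in\TPpmfin^d\), put \(y\coloneqq w^{\odot-1}\). By definition of \(\Hsmax\), the condition \(q^i\in\Hsmax(w)\) reads
\[
  \bigoplus_{j=1}^{d+1} Q_{ij}\odot y_j \;\nabla\;\tzero .
\]
Scaling the row representative of \(q^i\) multiplies every term by a common nonzero scalar, so balancedness is unchanged. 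Hence \(\q\subset\Hsmax(w)\) if and only if \(y\) solves the homogeneous balance system with coefficient matrix \(Q\).

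\emph{Existence.} Min-general position means that every square submatrix of the homogenized matrix is non-singular; this is insensitive to row rescaling. In particular each \(d\times d\) submatrix \(Q_{\widehat j}\) is non-singular, so \(\operatorname{cr}_{\pm}(Q)\) is defined and lies in \((\strop^\times)^{d+1}\). The signed Cramer rule says it solves the system. Taking \(w\coloneqq\operatorname{cr}_{\pm}(\q)^{\odot-1}\), which is well defined because all coordinates are nonzero, gives \(y=\operatorname{cr}_{\pm}(\q)\). Hence all points of \(\q\) lie on \(\Hsmax(\operatorname{cr}_{\pm}(\q)^{\odot-1})\). The projective class does not depend on the representatives, as noted before the lemma: rescaling a row of \(Q\) rescales every \(\tdet(Q_{\widehat j})\) by the same scalar.

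\emph{Uniqueness.} Let \(\Hsmax(w)\) be any signed max-tropical hyperplane containing \(\q\), with \(w\in\TPpmfin^d\). Then \(y=w^{\odot-1}\) is a nonzero solution of the system. By the uniqueness part of the signed Cramer rule, \(y=\lambda\odot\operatorname{cr}_{\pm}(Q)\) for some \(\lambda\in\strop^\times\). Therefore \(w=\lambda^{\odot-1}\odot\operatorname{cr}_{\pm}(\q)^{\odot-1}\) represents the same projective point, and the two hyperplanes coincide.

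The main obstacle is checking that the hypotheses of~\cite[Cor.~4.19]{AkianGaubertGuterman2014} are met, in particular that uniqueness up to scalars holds for solutions in \(\strop^{d+1}\) when all maximal minors \(\tdet(Q_{\widehat j})\) are non-singular, i.e.\ not balanced. There is a subtlety: a solution could in principle have some zero coordinates \(y_j=\tzero\), which would not correspond to an apex in \(\TPpmfin^d\). The uniqueness statement rules this out, since the unique solution up to scalars has all coordinates nonzero. The statement only requires non-singularity of the maximal minors, which min-general position provides. If the reference is phrased for signed (sign-definite) solutions in the symmetrized semiring, I would note that our solutions lie in \(\strop^{d+1}\) by construction, so the corollary applies directly.
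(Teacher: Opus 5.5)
Your proposal is correct and follows essentially the same route as the paper: you rewrite the containment of \(\q\) in \(\Hsmax(w)\) as the homogeneous balance system with coefficient matrix \(Q\) in the unknown \(w^{\odot-1}\), use min-general position to make every \(d\times d\) minor non-singular, and invoke the signed tropical Cramer rule for both existence and projective uniqueness. Your additional remarks on independence of row representatives and on zero coordinates are harmless refinements; the zero-coordinate issue is automatic, since any apex in \(\TPpmfin^d\) already has all coordinates nonzero.
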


\begin{proof}
Let \(Q\) have the points of \(\q\) as rows.
An apex \(w\in\TPpmfin^d\) defines a hyperplane containing all points of
\(\q\) precisely when $\bigoplus_{j=1}^{d+1}Q_{ij}\odot w_j^{\odot-1} \;\nabla\;\tzero$ for every $i\in[d]$. min-general position makes every \(d\times d\) submatrix of \(Q\)
non-singular. The signed tropical Cramer rule therefore gives the unique
projective solution \(w^{\odot-1}=\operatorname{cr}_{\pm}(\q)\), with
all coordinates nonzero, so the unique apex is
\(w=\operatorname{cr}_{\pm}(\q)^{\odot-1}\).
\end{proof}

\medskip

As in the unsigned setting, we call \(\Hsmax(\operatorname{cr}_{\pm}(\q)^{\odot-1})\) the \emph{signed max-tropical hyperplane spanned by} \(\q\).

\subsection{Logarithmic limit and the geometry of signed tropical chirotopes}\label{sec:signed-real-limits}

We finally explain how the signed tropical space provides a geometric interpretation of signed tropical chirotopes in terms of positions of points relative to induced hyperplanes. Before that, we outline how the signed tropical geometry can be understood as a logarithmic limit as illustrated in \cref{fig:signed-puiseux-to-tropical}.

\subsubsection{Signed tropicalization as a logarithmic limit}

Our purpose here is to clarify  how signed tropical chirotopes relate to real ones, even though we do not use this relation in our proofs.

\medskip

The \emph{monomial lift} of a signed tropical number
$a=(\varepsilon,\alpha)$ is
$a(t)\coloneqq\varepsilon t^\alpha$, with the convention that $t^{-\infty}$ is the null monomial. Note that for $a, b \in \strop$ we have $(a \odot b)(t) = a(t)*b(t)$ and, whenever $a\oplus b$ is defined and nonzero, it records the sign of the leading coefficient and the leading exponent of $a(t)+b(t)$. 
For $t>1$, this monomial interpretation can be considered throught the {\em signed
logarithmic map}:
\[
  \operatorname{Log}^{\pm}_t(x) =
  \begin{cases}
    \bigl(\sgn(x),\log_t|x|\bigr),&x\neq0,\\ \tzero,&x=0.
  \end{cases}
\]
Applying this map coordinatewise to a point of $\R^d$ maps it to the signed tropical space of \cref{sec:viro-hyperplanes}. Now, for $j \in [d+1]$ let $c_j(t)=\delta_jt^{b_j}$ for with $\delta_j\in\{\pm1\},\quad b_j\in\R$ and write $c_j=(\delta_j,b_j)$ for the corresponding signed tropical number. For each \(t>1\), the real coefficients \(c_j(t)\) define the affine hyperplane $\mathcal H_t(c)=\left\{x\in\R^d: \sum_{j=1}^d c_j(t)x_j+c_{d+1}(t)=0 \right\}$. The signed tropical hyperplane
\[\mathcal H^{\mathrm{trop}}(c)= \left\{x\in\strop^d: \left(\bigoplus_{j=1}^d c_j\odot x_j\right) \oplus c_{d+1}\;\nabla\;\tzero \right\} =\Hsmax(c^{\odot-1})\]
is the limit of \(\operatorname{Log}^{\pm}_t(\mathcal H_t(c))\) as \(t\to\infty\)~\cite{Tabera2015,Alessandrini2013}. This is illustrated in \cref{fig:signed-puiseux-to-tropical} for $d=2$, $c^1(t)=(-t^2,1,-t^4)$, $c^2(t)=(-1,t^3,-t^4)$, and $c^3(t)=(1,1,-t^5)$.

\medskip

It turns out that the real chirotope of a monomial lift stabilizes to the signed tropical chirotope defined in \cref{sec:signed-chirotopes}.

\begin{proposition}\label{prop:tropical-real-realizability}
  Let \(\q\in(\strop^d)^n\) be in min-general position, and let
\(\q(t)\in(\R^d)^n\) be its coordinatewise monomial lift.
There exists \(T>1\) such that, for every \(t>T\), the real
configuration \(\q(t)\) is in general position and its chirotope $\chi_{\q(t)}$ equals the signed tropical chirotope $\chi_{\q}$.
\end{proposition}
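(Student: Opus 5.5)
The plan is to compare, for each \((d+1)\)-subset \(I=(i_1,\ldots,i_{d+1})\), the real determinant \(\Delta_I(t)\coloneqq\det\bigl(q^{i_k}(t),1\bigr)_{k\in[d+1]}\) with the signed tropical determinant of the homogenized submatrix \(\bar Q_I\). Since there are finitely many subsets \(I\), it suffices to find, for each \(I\), a threshold \(T_I\) such that \(\sgn\Delta_I(t)=\sgn\tdet(\bar Q_I)\) for all \(t>T_I\), and then to take \(T=\max_I T_I\). Nonvanishing of \(\Delta_I(t)\) for \(t>T\) gives general position of \(\q(t)\), and equality of signs gives \(\chi_{\q(t)}=\chi_{\q}\).

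For fixed \(I\), expand \(\Delta_I(t)=\sum_{\sigma\in\Sym_{d+1}}\sgn(\sigma)\prod_{k}\bar Q_{i_k,\sigma(k)}(t)\). By the monomial lift rule \((a\odot b)(t)=a(t)\,b(t)\), each product equals the monomial lift of the signed tropical number \((\sgn(\sigma),0)\odot\bigodot_k\bar Q_{i_k,\sigma(k)}\). Min-general position ensures that all entries are nonzero, so each term is \(\varepsilon_\sigma t^{\alpha_\sigma}\) with \(\varepsilon_\sigma\in\{\pm1\}\) and \(\alpha_\sigma\in\R\). Thus \(\Delta_I(t)\) is a generalized polynomial (a finite sum of real-exponent monomials) in \(t\).

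Let \(\alpha^*=\max_\sigma\alpha_\sigma\). Non-singularity of \(\bar Q_I\), which is part of min-general position (indeed every square submatrix is required to be non-singular), means exactly that all \(\sigma\) attaining \(\alpha^*\) share a common sign \(\varepsilon^*=\sgn\tdet(\bar Q_I)\); there may be several such \(\sigma\), which is precisely the signed-versus-unsigned subtlety noted in the remark. We then write
\[
\Delta_I(t)=t^{\alpha^*}\Bigl(\varepsilon^*N+\sum_{\alpha_\sigma<\alpha^*}\varepsilon_\sigma t^{\alpha_\sigma-\alpha^*}\Bigr),
\]
where \(N\ge1\) counts the leading terms. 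The leading terms add with the same sign and so cannot cancel. Each remaining term tends to \(0\) as \(t\to\infty\), since there are finitely many of them and each exponent \(\alpha_\sigma-\alpha^*\) is negative. Choosing \(T_I\) so that the remainder has absolute value below \(1/2\) for \(t>T_I\) yields \(\sgn\Delta_I(t)=\varepsilon^*\).

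The argument is essentially routine. The only step needing care is the bookkeeping of the identification itself: checking that the sign of each permutation term in the real Leibniz expansion matches the coefficient \((\sgn(\sigma),0)\) in \eqref{e-def-sdet}, with the constant column \(\tone\) lifting to \(1\). This correspondence is exactly what makes the tropical leading part the leading part of the real determinant. Note also that the chirotope only needs the \((d+1)\)-minors, so lower-order minors play no role here beyond guaranteeing nonzero entries.
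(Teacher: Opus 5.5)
Your proof is correct and is essentially the paper's: expand each minor by Leibniz, use non-singularity to see that the terms of maximal exponent all share the sign of \(\tdet\), let them dominate for large \(t\), and take one threshold over the finitely many minors. The only difference is scope. The paper runs the argument over every square submatrix of \(\bar Q\), so all minors of \(\bar Q(t)\) are nonzero; this is a stronger genericity and also covers \(n\le d\). You restrict to the maximal \((d+1)\)-minors, which is exactly what the chirotope and the usual notion of general position require when \(n\ge d+1\).
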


\begin{proof}
Let \(\bar Q\) be the homogenized matrix of \(\q\); its monomial
lift \(\bar Q(t)\) is the homogenized matrix of \(\q(t)\).
Consider a \(k\times k\) submatrix \(A\) of \(\bar Q\).
Its entries are nonzero by min-general position, so write
\(A_{ij}=(\varepsilon_{ij},a_{ij})\). The determinant expansion is
\[
  \det A(t)=\sum_{\sigma\in\Sym_k}
    \sgn(\sigma)
    \left(\prod_{i=1}^k\varepsilon_{i,\sigma(i)}\right)
    t^{\sum_{i=1}^k a_{i,\sigma(i)}}.
\]
Since \(A\) is non-singular, the terms of largest exponent have
the same sign and cannot cancel one another. They therefore determine
the eventual sign of the determinant, so
\(\sgn\det A(t)=\sgn\tdet(A)\) for all sufficiently large \(t\). There are finitely many square submatrices, so one threshold \(T\)
works for all of them. Thus every square minor of \(\bar Q(t)\) is
nonzero for \(t>T\), and \(\q(t)\) is in general position.
In particular, the maximal minors have the signs prescribed by
\(\chi_{\q}\), proving \(\chi_{\q(t)}=\chi_{\q}\).
\end{proof}

As a consequence, we have \(\OTT(n,d)\subseteq\OTR(n,d)\) for every $n > d \ge 1$.

\subsubsection{Signed tropical chirotopes and induced hyperplanes}\label{s:signed chirotopes and hyperplanes}

In the real setting, let \(\p=(p^1,\ldots,p^n)\) be a point sequence
in \(\R^d\) such that any \(d+1\) of its points are affinely independent.
Its \emph{chirotope} is the map \(\chi_{\p}\) that
associates to every \((d+1)\)-tuple \(I=(i_1,\ldots,i_{d+1})\) of
distinct indices in \([n]\) the sign
\[
  \chi_{\p}(I)=\chi_{\p}(i_1,\ldots,i_{d+1})=\sgn\det
  \begin{pmatrix}
    p^{i_1} & 1\\
    \vdots & \vdots\\
    p^{i_{d+1}} & 1
  \end{pmatrix}\in\{\pm1\}.
\]
Geometrically, the chirotope of \(\p\) records the side of each
point relative to the hyperplanes spanned by \(d\) affinely independent
points of \(\p\), with signs fixed by their order.

\begin{figure}[h]
  \centering
  \begin{tikzpicture}[
    line cap=round, line join=round, font=\small,
    spannedline/.style={black!80, line width=0.9pt},
    markedpoint/.style={circle, fill=black, inner sep=0pt,
      minimum size=3.8pt},
    pointname/.style={inner sep=1pt, outer sep=1pt},
    panelname/.style={font=\small\itshape, anchor=south},
  ]

  \begin{scope}[shift={(0,-1.7cm)},x=0.55cm,y=0.4cm]
    \node[panelname] at (4.5,11.75) {Real};
    \draw[spannedline] (0,{12/7}) -- (9,{30/7});

    \node[markedpoint] at (1,2) {};
    \node[pointname,anchor=south] at (1,2.4) {$q^1$};
    \node[markedpoint] at (8,4) {};
    \node[pointname,anchor=north] at (8,3.6) {$q^2$};
    \node[markedpoint] at (2,8) {};
    \node[pointname,anchor=west] at (2.25,8) {$x^+$};
    \node[markedpoint] at (2,0.5) {};
    \node[pointname,anchor=west] at (2.25,0.5) {$x^-$};
  \end{scope}

  \begin{scope}[shift={(9cm,0)},x=0.38cm,y=0.30cm]
    \pgfmathsetmacro{\virooffset}{1/ln(2)}
    \pgfmathsetmacro{\virobendx}{2+\virooffset}
    \pgfmathsetmacro{\virobendy}{4+\virooffset}
    \node[panelname] at (0,10) {The signed tropical plane};

    \draw[spannedline]
      ({\virobendx+3},{\virobendy+3})
      -- (\virobendx,\virobendy)
      -- ({-\virobendx},\virobendy)
      -- ({-\virobendx},{-\virobendy})
      -- ({-\virobendx-3},{-\virobendy-3});

    \node[markedpoint] at (\virooffset,\virobendy) {};
    \node[pointname,anchor=south]
      at (\virooffset,{\virobendy+0.45}) {$q^1$};
    \node[markedpoint] at ({3+\virooffset},{5+\virooffset}) {};
    \node[pointname,anchor=west]
      at ({3.4+\virooffset},{4.7+\virooffset}) {$q^2$};
    \node[markedpoint] at ({1+\virooffset},{6+\virooffset}) {};
    \node[pointname,anchor=west]
      at ({1.4+\virooffset},{6+\virooffset}) {$x^+$};
    \node[markedpoint] at ({1+\virooffset},{2+\virooffset}) {};
    \node[pointname,anchor=west]
      at ({1.4+\virooffset},{2+\virooffset}) {$x^-$};
  \end{scope}
\end{tikzpicture}
  \caption{Lines spanned by \(q^1,q^2\) in the real plane (left) and
  the signed tropical plane (right), with \(x^\pm\) on opposite sides.}
  \label{fig:chirotope-sides}
\end{figure}

Let \(\q=(q^1,\ldots,q^n)\) be a point sequence in min-general
position in \(\strop^d\), and let \(I=(i_1,\ldots,i_d)\) be a tuple of distinct indices in \([n]\).
For a point \(x\in\strop^d\), write
\(\tdet(\q^I,x)\) for the signed tropical
determinant of the matrix with rows
\((q^{i_1},\tone),\ldots,(q^{i_d},\tone),(x,\tone)\).
Expansion along the last row gives
\begin{equation}\label{eq:determinant-expansion}
  \tdet(\q^I,x)
  =\bigoplus_{j=1}^{d}c_j\odot x_j\oplus c_{d+1},
\end{equation}
where \(c_j\) is the signed tropical cofactor of the \(j\)th entry of the last row. By \cref{lem:signed-cramer-intersection}, the signed tropical determinant is balanced exactly when \(x\) lies on the signed max-tropical hyperplane spanned by \(\q^I\). This hyperplane divides \(\strop^d\) into two connected components. In fact, both components are the interior of (affine) tropical half-spaces; they are tropically convex and in particular, they are connected. (See e.g.~\cite{SGKatz} for more information on half-spaces and tropical convexity in the affine setting.)
Moreover,
\(\tdet(\q^I,x)\) has sign \(+1\) on one of them and \(-1\) on the other.
Thus, as in the real setting, the signed tropical chirotope of \(\q\) records the side of each point relative to the signed max-tropical hyperplanes spanned by \(d\) points of \(\q\). (See \Cref{fig:chirotope-sides}.)

\section{The upper bound of Theorem~\ref{thm:main-signed}}\label{sec:upper-signed}

We now prove our upper bound on $\#\OTT(n,d)$. The proof hinges on the elementary observation that every signed tropical chirotope can be recovered from a coherent matching field and the signs of the matrix entries used by its matchings.

\medskip

We define the \emph{sign} of a perfect matching $M\subset I\times[r]$, with $I=\{i_1<\cdots<i_r\}$, as $\sgn(M)\coloneqq\sgn(\sigma)\in\{\pm1\}$, where $\sigma\in\Sym_r$ is the unique permutation with $(i_\ell,\sigma(\ell))\in M$ for all $\ell\in[r]$. For $I\in\binom{[n]}{d+1}$, let $\bar Q_I$ denote the submatrix of $\bar Q$
with rows indexed by $I$, in increasing order. The following proposition is elementary.

\begin{proposition}\label{prop:ot-from-cmf-signs}
  Let $\q\in(\strop^d)^n$ be in min-general position, with
  homogenized matrix $\bar Q$.
  There exists a coherent matching field
  $\Lambda=(M_I)\in\CMFT(n,d+1)$ such that, for every
  $I\in\binom{[n]}{d+1}$, $M_I$ selects a term of maximal
  modulus in $\tdet(\bar Q_I)$ and $\chi_{\q}(I)=\sgn(M_I) \prod_{(i,k)\in M_I}\sgn(\bar Q_{i,k})$.
\end{proposition}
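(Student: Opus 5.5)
The plan is to take the modulus matrix $W\coloneqq|\bar Q|\in\R^{n\times(d+1)}$, perturb it slightly to a matrix $W'$ that is generic in the unsigned sense, and let $\Lambda$ be the matching field that $W'$ realizes. Min-general position guarantees every entry of $\bar Q$ is nonzero, so $W$ is indeed a finite real matrix.

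\textbf{Step 1: build $\Lambda$.} $W$ itself need not be generic, since several matchings may tie for the maximal modulus in some $\tdet(\bar Q_I)$. To break ties, choose $W'=W+\varepsilon E$ with $E$ a fixed matrix of generic (for instance, algebraically independent) entries and $\varepsilon>0$ small. Then:
\begin{itemize}
\item Each $I\in\binom{[n]}{d+1}$ has a unique $W'$-maximal matching $M_I\in\mathcal M_I$. The sums of $E$ over distinct matchings differ, so no ties remain.
\item $\Lambda=(M_I)_I$ is coherent by definition, being realized by $W'$.
\item For $\varepsilon$ smaller than the minimal positive gap between $W$-weights of matchings, over finitely many $I$, each $M_I$ also maximizes the $W$-weight over $\mathcal M_I$. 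Hence $M_I$ selects a term of maximal modulus in $\tdet(\bar Q_I)$.
\end{itemize}
This perturbation step is the only delicate part, and it is a standard finiteness argument.

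\textbf{Step 2: identify the term.} Write $I=\{i_1<\dots<i_{d+1}\}$. The term of the expansion~\eqref{e-def-sdet} indexed by $\sigma$ has sign $\sgn(\sigma)\prod_\ell\sgn(\bar Q_{i_\ell,\sigma(\ell)})$. Take $\sigma$ to be the permutation encoding $M_I$. By definition $\sgn(M_I)=\sgn(\sigma)$, so the term selected by $M_I$ has sign $\sgn(M_I)\prod_{(i,k)\in M_I}\sgn(\bar Q_{i,k})$.

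\textbf{Step 3: conclude.} Min-general position makes $\bar Q_I$ non-singular. So $\tdet(\bar Q_I)$ is defined and nonzero, and every term of maximal modulus carries the same sign, which equals $\sgn\tdet(\bar Q_I)=\chi_{\q}(I)$. The term selected by $M_I$ is one of these terms, so its sign from Step~2 equals $\chi_{\q}(I)$, which is exactly the claimed formula.
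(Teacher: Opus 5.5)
Your proof is correct and follows essentially the same route as the paper. You perturb $W=|\bar Q|$ slightly so that every $I$ has a unique maximizing matching while the strict weight comparisons of $W$ are preserved, take the resulting coherent matching field, and use that $\tdet(\bar Q_I)$ is not balanced to read $\chi_{\q}(I)$ off the sign of the selected term. One cosmetic nit: the bound on $\varepsilon$ should be the minimal positive $W$-gap divided by the largest difference of $E$-weights between matchings, not the gap itself, though ``$\varepsilon$ sufficiently small'' already covers this.
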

\begin{proof}
  Since every $1\times1$ submatrix of $\bar Q$ is
  non-singular, $W=|\bar Q|$ has finite entries.
  Choose a perturbation $W'\in\R^{n\times(d+1)}$ satisfying:
  \begin{itemize}
  \item $W'$ is generic in the sense of \cref{sec:pointed-cmf};
  \item every strict inequality between matching weights at $W$
  is preserved at $W'$.
  \end{itemize}
  There are finitely many weight comparisons, so these conditions
  can be achieved by a sufficiently small perturbation avoiding
  the corresponding equality hyperplanes.
  For each $I\in\binom{[n]}{d+1}$, let $M_I$ be the unique
  maximizing matching for $\tper(W'_I)$.
  Then $\Lambda\coloneqq(M_I)_I$ is coherent, realized by $W'$,
  and the second condition ensures that each $M_I$ also
  maximizes $\tper(W_I)$. Now, fix $I\in\binom{[n]}{d+1}$. Since $\q$ is in min-general position,
  $\tdet(\bar Q_I)$ is not balanced. Thus all its terms of maximal
  modulus, including the permutation sign, have the same nonzero sign.
  The term selected by $M_I$ therefore has sign $\chi_{\q}(I)$,
  which gives the stated formula.
\end{proof}

Now, the announced upper bound follows from~\cref{prop:ot-from-cmf-signs} and the upper bound of \cref{thm:main-unsigned}.

\begin{proposition}
  For every fixed $d \geq 1$, as $n \to \infty$,
  \[\log_2 \card{\OTT(n,d)} \;\leq\; d^2\,n\log_2 n \;-\; \bigl((d+1)\log_2(d!) - d\bigr)\,n \;+\; \Oh(1).\]
\end{proposition}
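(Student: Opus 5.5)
By \cref{prop:ot-from-cmf-signs}, every tropical labeled order type $\chi_{\q}\in\OTT(n,d)$ is determined by two pieces of data: a coherent matching field $\Lambda=(M_I)\in\CMFT(n,d+1)$, and the sign matrix $S=\sgn(\bar Q)\in\{\pm1\}^{n\times(d+1)}$. Indeed, $\chi_{\q}(I)=\sgn(M_I)\prod_{(i,k)\in M_I}S_{i,k}$ for every $I\in\binom{[n]}{d+1}$. Hence the map $\chi_{\q}\mapsto(\Lambda,S)$, obtained by choosing one such pair for each order type, is injective. We therefore get $\card{\OTT(n,d)}\le\card{\CMFT(n,d+1)}\cdot\#\{\text{admissible }S\}$.

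Next we count the sign matrices. The last column of $\bar Q$ is identically $\tone$, so its signs are forced to be $+1$. Only the $dn$ entries $\sgn(q^i_k)$ with $k\in[d]$ are free, which gives at most $2^{dn}$ sign matrices. This yields
\[
  \card{\OTT(n,d)}\le 2^{dn}\,\card{\CMFT(n,d+1)} .
\]
(No normalization of row signs is needed, and none is available here, since the homogenizing coordinate fixes the representative.)

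Finally we take logarithms and apply \cref{cor:cmf-log-upper-bound} with $r=d+1$:
\[
  \log_2\card{\CMFT(n,d+1)}\le d^2 n\log_2 n-(d+1)n\log_2(d!)+\Oh(1).
\]
Adding the $dn$ contribution from the sign matrices gives exactly $d^2n\log_2 n-\bigl((d+1)\log_2(d!)-d\bigr)n+\Oh(1)$. The error term stays $\Oh(1)$ rather than $\Oh(\log n)$ because we use the non-pointed count $\CMFT(n,d+1)$ directly, with no index shift.

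The only delicate point is the injectivity step. The matching field produced by \cref{prop:ot-from-cmf-signs} depends on a choice of perturbation and is not canonical. Different realizations $\q$ of the same order type may therefore lead to different pairs $(\Lambda,S)$. This does not matter: we only need the reverse map $(\Lambda,S)\mapsto\chi$ to be well defined, and it is, by the explicit formula. So the number of order types is at most the number of pairs.
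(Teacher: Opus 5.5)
Your proof is correct and follows the paper's argument: you combine \cref{prop:ot-from-cmf-signs} with a count of sign patterns and then apply \cref{cor:cmf-log-upper-bound} with $r=d+1$. The only difference is that the paper counts signs only on the support of $\Lambda$, which has $n-d$ positions per column by Sturmfels--Zelevinsky, and so gets $2^{d(n-d)}$ rather than your $2^{dn}$. The resulting factor $2^{d^2}$ is absorbed in the $\Oh(1)$ term, so your version is equally valid and avoids that citation.
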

\begin{proof}
  Fix $\Lambda\in\CMFT(n,d+1)$, and let
  $\operatorname{supp}(\Lambda)\coloneqq\bigcup_I M_I$
  be the set of positions used by its matchings.
  By \cite[Corollary~3.4]{SturmfelsZelevinski93},
  this support contains exactly $n-d$ positions in each column.
  The homogenizing column has sign $+1$, so there are at most
  $2^{d(n-d)}$ sign patterns on the remaining used positions.
  By \cref{prop:ot-from-cmf-signs}, these signs together with
  $\Lambda$ determine the chirotope. Consequently, $\card{\OTT(n,d)}
    \leq 2^{d(n-d)}\,\card{\CMFT(n,d+1)}$. Taking logarithms and applying \cref{cor:cmf-log-upper-bound}
  with $r=d+1$ gives
  \begin{align*}
    \log_2\card{\OTT(n,d)}
    &\leq d(n-d)+\log_2\card{\CMFT(n,d+1)}\\
    &\leq d^2n\log_2 n
      -\bigl((d+1)\log_2(d!)-d\bigr)n+\Oh(1).
  \end{align*}
  Here the constant term $-d^2$ is absorbed into $\Oh(1)$,
  since $d$ is fixed.
\end{proof}

\section{The lower bound of Theorem~\ref{thm:main-signed}}\label{sec:lower-signed}

In this section we prove the lower bound on $\#\OTT(n,d)$. We use the geometric interpretation of signed tropical chirotopes presented in \Cref{s:signed chirotopes and hyperplanes} and proceed as in the unsigned setting: the key step is to construct a point sequence that admits many 1-point extensions with distinct signed tropical chirotopes. Here also we control these extensions through a \emph{signed auxiliary arrangement} (\cref{sec:signed-extension-step}): extension points with distinct sign vectors relative to it yield distinct chirotopes.

\subsection{The signed auxiliary arrangement and the extension step}
\label{sec:signed-extension-step}

Let $t \geq d$, and let \(\q=(q^1,\ldots,q^t)\) be a point sequence in min-general position in \(\strop^d\). For \(I=\{i_1<\cdots<i_d\}\in\binom{[t]}d\), let \(H_{\q^I}\) be
the unique signed max-tropical hyperplane spanned by
\(\q^I\) (\cref{lem:signed-cramer-intersection}).
We define the \emph{signed auxiliary arrangement} of \(\q\) by $\mathcal B=\mathcal B^{\pm}(\q)
\coloneqq \bigl(H_{\q^I}\bigr)_{I\in\binom{[t]}d}$. The \emph{sign vector} relative to $\mathcal B$ of a point \(x\) not on any of these hyperplanes is
  \[\sigma^{\mathcal B}(x) \coloneqq \bigl(\sgn\tdet(\q^I,x)\bigr)_{I\in\binom{[t]}d}\in\{\pm1\}^{\binom td}.\]
  As explained in \Cref{s:signed chirotopes and hyperplanes}, the entries of $\sigma^{\mathcal B}(x)$ record the position of \(x\) with respect to the hyperplanes spanned by every \(d\) points of \(\q\). The sign vector plays the role that the type \(T^{\mathcal B}\) plays in \cref{sec:unsigned-extension-step}.

\medskip

A \emph{generic extension} of \(\q\) is a sequence \(\v=(q^1,\ldots,q^t,v^1,\ldots,v^k)\), for some \(k\geq1\), that is in min-general position. Here is the signed analogue of Lemma~\ref{lem:generic-extension-existence}:

\begin{lemma}\label{lem:signed-generic-extension-existence}
Let \(\q\) be a (possibly empty) point sequence in min-general
position in \(\strop^d\). Let $k \geq 1$, \(s^1,\ldots,s^k\in\{\pm1\}^d\), and \(O \coloneqq O_1\times\cdots\times O_k \subset\mathcal C_{s^1}\times\cdots\times\mathcal C_{s^k}\) a nonempty open set. There exists a generic extension \((\q,v^1,\ldots,v^k)\)
with \((v^1,\ldots,v^k)\in O\).
\end{lemma}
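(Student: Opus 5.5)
Inside each sign chart \(\mathcal C_{s^\ell}\simeq\R^d\) (identified through moduli), min-general position of the extended sequence is a finite list of non-singularity conditions on square submatrices of the homogenized matrix. Within a fixed product of sign charts, each condition should fail only on a finite union of proper affine hyperplanes of the modulus coordinates \((|v^1|,\ldots,|v^k|)\in\R^{dk}\). The proof then ends as in \cref{lem:generic-extension-existence}: an open set cannot be covered by finitely many proper affine hyperplanes.

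First, the signs of all entries of the extended homogenized matrix are fixed. The rows of \(\q\) are given, the row of \(v^\ell\) has signs \((s^\ell,+1)\), and all coordinates of points in \(\mathcal C_{s^\ell}\) are nonzero, so the \(1\times1\) conditions hold automatically on \(O\). Now take a square submatrix \(A\) of the homogenized matrix of \((\q,v^1,\ldots,v^k)\). Each term of \(\tdet(A)\) has a fixed sign, determined by the permutation sign and the entry signs. Its modulus is an affine function of the moduli \(|v^\ell_i|\), of the form \(\sum_{(a,b)\in\sigma}|A_{a,b}|\). So \(A\) is singular only if two terms of opposite signs tie for the maximal modulus. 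This forces equality of two distinct affine functions \(L_\sigma=L_\tau\), coming from two permutations \(\sigma,\tau\) whose terms have opposite signs. That puts the modulus vector on the locus \(\{L_\sigma=L_\tau\}\).

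The main obstacle is to check that this locus is a \emph{proper} hyperplane whenever it can actually be hit. In other words, we must rule out \(L_\sigma\equiv L_\tau\) as functions of the variable moduli for two terms of opposite sign. I would split into two cases.

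If \(A\) involves no row of any \(v^\ell\), then \(A\) is a submatrix of \(\bar Q\). It is non-singular by the hypothesis on \(\q\), so it imposes no constraint.

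Otherwise, assume \(L_\sigma\equiv L_\tau\). Then for each new row \(a\) the variable \(|v_{\sigma(a)}|\) must match the variable \(|v_{\tau(a)}|\). So \(\sigma\) and \(\tau\) agree on every new row, except possibly in the homogenizing column, whose entry \(\tone\) has modulus \(0\). The rows where they could differ are then the old rows, together with at most one new row that uses the homogenizing column in one of the two permutations. I would handle this case by an induction on the number of new rows in \(A\), via Laplace expansion along a new row \(v^\ell\). We have
\[
\tdet(A)=\bigoplus_j (\pm)\,A_{\ell j}\odot\tdet(A^{(\ell j)}),
\]
where \(A^{(\ell j)}\) is the minor with row \(\ell\) and column \(j\) deleted. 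For moduli outside finitely many hyperplanes, first make every minor \(A^{(\ell j)}\) non-singular, by induction. Then impose that the affine functions \(|A_{\ell j}|+|\tdet(A^{(\ell j)})|\), for distinct \(j\), are pairwise distinct. These are distinct affine functions, since they differ in which coordinate \(|v^\ell_j|\) appears, or in whether that row's coefficient is constant (for the homogenizing column). Once these conditions hold, a unique term dominates, so \(\tdet(A)\) is not balanced.

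For the induction to work, each minor \(\tdet(A^{(\ell j)})\) must have a modulus that is locally affine away from these hyperplanes. That holds once it is non-singular: its modulus is then the value of a single dominant matching. I would also order the genericity conditions so that the \(v^\ell\) are added one at a time. Since the base sequence is in min-general position, the minors not involving \(v^\ell\) are already fixed and non-singular. Intersecting \(O\) with the complement of the resulting finite union of proper hyperplanes leaves a nonempty set. Any point in it gives the required generic extension.
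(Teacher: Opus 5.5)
Your proof is correct and is essentially the paper's argument. You expand each new minor along the row of a new point, and you observe that a tie between two maximal-modulus expansion terms lies on a proper affine hyperplane in the modulus coordinates, because the two terms differ in which coordinate of that row appears, or one of them uses the homogenizing entry $\tone$. You then add $v^1,\ldots,v^k$ one at a time, so every cofactor is a fixed non-singular minor of an already min-general sequence.

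With that ordering, your induction on the number of new rows is unnecessary. That is just as well, because its stated justification is inaccurate: in the signed setting non-singularity does not give a single dominant matching, since several maximal terms of equal sign are allowed. The local affineness you need still holds, because the modulus of $\tdet$ is always the tropical permanent of the moduli.
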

\begin{proof}
For \(k=1\), expand each new tropical minor along the row
corresponding to \(v^1\). The cofactors in each expansion
are nonzero by the min-general position of \(\q\). The tropical minor is
balanced only if two expansion terms attain
the same maximal modulus. The equality of their moduli
defines a proper affine hyperplane in modulus coordinates.
Thus, when $v^1$ is chosen outside a finite union of such hyperplanes, every
new minor is non-singular. Since this union cannot
cover \(O\), the required extension exists.
For general \(k\), choose
\(v^i\in O_i\) successively using the case \(k=1\).
\end{proof}

\begin{lemma}
\label{lem:signed-extension}
  Let \(\q\) be in min-general position, and let
  \(\mathcal B=\mathcal B^{\pm}(\q)\). Let \(\v\) and \(\w\) be generic
  extensions obtained by appending \((v^1,\ldots,v^k)\) and
  \((w^1,\ldots,w^k)\) to $\q$, respectively. If $\bigl(\sigma^{\mathcal B}(v^1),\ldots,\sigma^{\mathcal B}(v^k)\bigr) \ne \bigl(\sigma^{\mathcal B}(w^1),\ldots,\sigma^{\mathcal B}(w^k)\bigr)$, then \(\v\) and \(\w\) have distinct signed tropical chirotopes.
\end{lemma}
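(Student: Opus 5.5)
The argument is a direct reading of the definitions. In the signed setting the chirotope itself records the sign vectors, so, unlike in the proof of \cref{lem:unsigned-extension}, we will not need the Cramer-point uniqueness argument.

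First, we fix an index \(\ell\in[k]\) with \(\sigma^{\mathcal B}(v^\ell)\neq\sigma^{\mathcal B}(w^\ell)\). Then there is some \(I=\{i_1<\cdots<i_d\}\in\binom{[t]}{d}\) with \(\sgn\tdet(\q^I,v^\ell)\neq\sgn\tdet(\q^I,w^\ell)\). Both sides are well defined and lie in \(\{\pm1\}\) because \(\v\) and \(\w\) are in min-general position. Indeed, the \((d+1)\times(d+1)\) matrix with rows \((q^{i_1},\tone),\ldots,(q^{i_d},\tone),(v^\ell,\tone)\) is a square submatrix of the homogenized matrix of \(\v\), so it is non-singular. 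In particular \(v^\ell\) lies on no hyperplane of \(\mathcal B\), so \(\sigma^{\mathcal B}(v^\ell)\) makes sense, and the same holds for \(w^\ell\).

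Next, we observe that \(\tdet(\q^I,v^\ell)\) is exactly the quantity defining \(\chi_{\v}(i_1,\ldots,i_d,t+\ell)\), because the point \(v^\ell\) carries label \(t+\ell\) in \(\v\). Likewise, \(\tdet(\q^I,w^\ell)\) defines \(\chi_{\w}(i_1,\ldots,i_d,t+\ell)\), since \(w^\ell\) carries the same label \(t+\ell\) in \(\w\) and the first \(t\) points of both sequences coincide. It follows that \(\chi_{\v}(i_1,\ldots,i_d,t+\ell)\neq\chi_{\w}(i_1,\ldots,i_d,t+\ell)\).

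This tuple is already in increasing order, so the two chirotopes, viewed as alternating maps or as functions on \(\binom{[t+k]}{d+1}\), differ. Hence \(\v\) and \(\w\) have distinct signed tropical chirotopes.

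The only point that needs care is that the sign vector and the chirotope use the same determinant convention, with the new point in the last row and the homogenizing column \(\tone\). Both are defined this way in \Cref{s:signed chirotopes and hyperplanes}, so no reordering sign appears. Once this is checked, there is no real obstacle.
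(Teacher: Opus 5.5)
Your proposal is correct and follows the paper's proof: pick $\ell$ and $I$ where the sign vectors differ, and observe that $\bigl(\sigma^{\mathcal B}(v^\ell)\bigr)_I=\chi_{\v}(i_1,\ldots,i_d,t+\ell)$ and likewise for $\w$. Your extra check that min-general position of the extensions makes these signs well defined is a harmless addition the paper leaves implicit.
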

\begin{proof}
  Let $\ell \in [k]$ denote some index such that $\sigma^{\mathcal B}(v^\ell) \neq \sigma^{\mathcal B}(w^\ell)$. This means that there exists \(I=\{i_1<\cdots<i_d\}\in\binom{[t]}{d}\) such that $\bigl(\sigma^{\mathcal B}(v^\ell)\bigr)_I \ne\bigl(\sigma^{\mathcal B}(w^\ell)\bigr)_I$. In particular
  \[
    \chi_{\v}(i_1,\ldots,i_d,t+\ell)
    =\bigl(\sigma^{\mathcal B}(v^\ell)\bigr)_I
    \ne\bigl(\sigma^{\mathcal B}(w^\ell)\bigr)_I
    =\chi_{\w}(i_1,\ldots,i_d,t+\ell)
  \]
  and $\chi_{\v} \neq \chi_{\w}$.
\end{proof}

\subsection{Point sequences with many extensions}\label{sec:basic-config-signed}

We now construct a point sequence whose signed auxiliary arrangement,
as defined in \cref{sec:signed-extension-step}, realizes many sign
vectors. As in \cref{sec:basic-config-unsigned}, we construct a product
grid from spanned hyperplanes.

\medskip

Let \(\mathcal C_1\) and \(\mathcal C_2\) be the sign charts indexed by \((+1,\ldots,+1)\) and \(\bigl((-1)^{d+1},\ldots,(-1)^{d+1}\bigr)\) in \(\{\pm1\}^d\), respectively. (They coincide when \(d\) is odd.) We place our points in the sign chart \(\mathcal C_1\) so that their signed auxiliary arrangement realizes many sign vectors in
\(
  \mathcal W\coloneqq
  \{x\in\mathcal C_2:|x|\in(-1,1)^d\}.
\)
\cref{prop:unsigned-local-product-grid} has the following signed analogue:

\begin{proposition}\label{prop:signed-local-product-grid}
  For every \(m\geq1\) there exist \(m\)-element sets \(A_s^j\subset\mathcal C_1\), for \(j,s\in[d]\), such that:
  \begin{itemize}
  \item[(i)] $\bigcup_{j,s\in[d]} A_s^j$ is a set of \(d^2m\) points in min-general position,
  \item[(ii)] for every \(j\in[d]\) and every \(\q=(q^1,\ldots,q^d)\in A_1^j\times\cdots\times A_d^j\), there exists some \(c_j(\q)\in(-1,1)\) such that the signed max-tropical hyperplane spanned by \(\q\) intersects \(\mathcal W\) in \(\{x\in\mathcal W:|x_j|=c_j(\q)\}\),
  \item[(iii)] for every \(j\in[d]\), as \(\q\) ranges over \(A_1^j\times\cdots\times A_d^j\), the \(m^d\) numbers \(c_j(\q)\) are pairwise distinct.
  \end{itemize}
\end{proposition}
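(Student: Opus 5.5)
The plan is to transport the proof of \cref{prop:unsigned-local-product-grid} to the signed setting. Points of $\mathcal C_1$ have all signs $+1$, so we can choose their moduli exactly as in the unsigned construction and then check that the signed hyperplane spanned by a selection has modulus apex equal to the unsigned one. Over the positive chart, the signed tropical minors of $\bar Q$ have moduli given by tropical permanents of $|\bar Q|$. Their signs are those of the maximizing permutations. Under unsigned genericity of $|\bar Q|$, that is every square submatrix non-singular in the sense of \cref{sec:pointed-cmf}, the permutation is unique, so the signed minors are non-singular. Hence (i) will follow once the moduli are in the appropriate unsigned general position, arranged by a perturbation as in \cref{lem:signed-generic-extension-existence}.

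Next, I would identify the spanned hyperplane. For $\q\subset\mathcal C_1$ in min-general position, \cref{lem:signed-cramer-intersection} gives the apex $w=\operatorname{cr}_{\pm}(\q)^{\odot-1}$. Its modulus is $-\operatorname{cr}(|\q|)$, matching the unsigned max-tropical hyperplane spanned by the moduli. Its sign is $w_k=(-1)^{d+1+k}\sgn(\sigma_k)$, where $\sigma_k$ is the optimal matching of the minor deleting column $k$.

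The constructive step mirrors \cref{lem:unsigned-local-coordinate-slices}. Fix $j$ and place the moduli $|q^s|$ near $\bar p^{j,s}$ so that every point of $\q$ lies on the relative interior of the facet $F_{i_{s-1}i_s}$ of the unsigned max-tropical hyperplane spanned by $|\q|$. I would use the duality $|x|\mapsto -|x|$ to reuse the min-construction; equivalently, apply the unsigned proposition to $-|q|$ with $\Hmin$ replaced by $\Hmax$. Near the origin of $\mathcal W$, the unsigned hyperplane $\Hmax(|w|)$ then coincides with $\{|x_j|-|w_j|=|x_r|-|w_r|\}$, that is, $|x_j|=c_j(\q)$ for $c_j(\q)$ the same telescoping sum as in \eqref{eq:path-telescope}, up to sign conventions. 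The signed hyperplane keeps this facet in chart $\mathcal C_2$ exactly when $\sgn(w_j)s_j=-\sgn(w_r)s_r$, with $s_j=(-1)^{d+1}$ and $s_r=+1$. So the remaining verification is the sign identity $\sgn(w_j)\sgn(w_r)=(-1)^{d}$. Since the other facets are not met in $\mathcal W$ by the local analysis, (ii) follows once this facet is retained.

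Checking this sign identity is the main obstacle. With all entries positive, $\sgn(w_k)=(-1)^{d+1+k}\sgn(\sigma_k)$. Here $\sigma_j$ and $\sigma_r$ are the optimal matchings of $|Q|$ with column $j$, respectively column $r$, deleted. For the path-shaped configuration, the point $q^s$ maximizes at coordinates $\{i_{s-1},i_s\}$, so the optimal matchings are forced. Deleting column $i_0=j$ matches $q^s\mapsto i_s$, and deleting $i_d=r$ matches $q^s\mapsto i_{s-1}$. These two bijections differ by the cyclic shift along the path $i_0,\dots,i_d$. After accounting for the column deletion and reindexing, I expect the parity factor $(-1)^{r+j}$ from the cofactors to combine with this shift to give exactly $(-1)^{d}$. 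This is why the chart $\mathcal C_2$ with signs $(-1)^{d+1}$ was chosen. If the parity comes out wrong, I would instead flip one sign choice, for instance by reversing the path orientation.

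Finally, (iii) follows verbatim from \cref{lem:unsigned-distinct-slices}: the $c_j$ are linear in the moduli, so a generic choice within the neighborhoods avoids finitely many hyperplanes. A final small perturbation secures min-general position while preserving the open conditions.
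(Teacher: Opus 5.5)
Your proof is correct. It differs from the paper's at one step: how you identify the hyperplane spanned by $\q$. The reflected neighborhoods, the telescoping $c_j(\q)$, the reuse of \cref{lem:unsigned-distinct-slices} and the final perturbation are the same as in the paper.

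\emph{Your route.} You compute the apex from the signed Cramer vector. This leaves you the cofactor-sign identity $\sgn(w_j)\sgn(w_r)=(-1)^d$, which you only say you expect to hold. It does hold. With all entries positive, $\sgn(w_k)=\sgn(\hat\sigma_k)$, where $\hat\sigma_k\in\Sym_r$ extends $\sigma_k$ by $r\mapsto k$; the factor $(-1)^{d+1+k}=(-1)^{r+k}$ is exactly the Laplace sign. Your two forced matchings then satisfy $\hat\sigma_j=\hat\sigma_r\circ(1\,2\,\cdots\,r)$, and an $r$-cycle has sign $(-1)^d$.

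\emph{A shorter argument.} No permutation bookkeeping is needed. The point $q^s$ lies on the spanned hyperplane, all its signs are $+1$, and its terms of maximal modulus sit exactly at $i_{s-1},i_s$. Balance therefore forces $\sgn(w_{i_{s-1}})=-\sgn(w_{i_s})$. Chaining along the path from $i_0=j$ to $i_d=r$ gives $(-1)^d$.

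\emph{The paper's route.} The paper runs this observation backwards. It writes down $w$ with $|w|=u$ and $\sgn(w_{i_s})=(-1)^{d-s}$, verifies $q^s\in\Hsmax(w)$ directly, and concludes by the uniqueness in \cref{lem:signed-cramer-intersection}. It never computes $\operatorname{cr}_{\pm}$ or the optimal matchings of the minors.

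\emph{Your fallback.} Reversing the path could not have repaired a wrong parity. The alternation of apex signs is forced by incidence, so $\sgn(w_j)\sgn(w_r)=(-1)^d$ for any path with $d$ edges, in either orientation. The only free parameter is the sign chart of $\mathcal W$, which is why $\mathcal C_2$ carries the signs $(-1)^{d+1}$.
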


\begin{figure}[htbp]
  \centering
  \resizebox{!}{4.3cm}{\begin{tikzpicture}[
    x=0.85cm, y=0.85cm,
    >=latex,
    line cap=round, line join=round,
    font=\small,
    axis/.style={black!35, line width=0.45pt},
    chartlabel/.style={font=\scriptsize, text=black!50, fill=white, inner sep=0.7pt},
    secantB/.style={RoyalBlue!50,           line width=0.45pt},
    secantR/.style={BrickRed!55!white,      line width=0.45pt},
    clusterB/.style={RoyalBlue!80!black, dashed, line width=0.55pt},
    clusterR/.style={BrickRed!85!black, dashed, line width=0.55pt},
    apexB/.style={circle, fill=RoyalBlue!85!black, inner sep=0pt, minimum size=2.0pt},
    apexR/.style={circle, fill=BrickRed!85!black,  inner sep=0pt, minimum size=2.0pt},
    ptlabel/.style={font=\footnotesize, fill=white, inner sep=1pt, outer sep=2pt},
    clusterlabel/.style={font=\scriptsize, fill=white, inner sep=0.7pt, outer sep=1pt},
    zoombox/.style={dashed, black!55, line width=0.6pt},
  ]

  \begin{scope}
    \clip (-3.3,-3.2) rectangle (3.35,3.05);

    \draw[axis] (-3.3,0) -- (3.35,0);
    \draw[axis] (0,-3.2) -- (0,3.05);
    \node[chartlabel, anchor=south west] at (0.10,0.10) {$(+,+)$};
    \node[chartlabel, anchor=south east] at (-0.10,0.10) {$(-,+)$};
    \node[chartlabel, anchor=north west] at (0.10,-0.10) {$(+,-)$};
    \node[chartlabel, anchor=north east] at (-0.10,-0.10) {$(-,-)$};

    \foreach \ax/\ay in {1.93/2.57, 2.09/2.52, 2.02/2.415} {
      \foreach \bx/\by in {0.92/2.06, 1.08/2.02, 0.99/1.91} {
        \pgfmathsetmacro{\ux}{\by + \ax - \ay}
        \pgfmathsetmacro{\uy}{\by}
        \draw[secantB] (\ux,\uy) -- (3.35,{\uy+3.35-\ux});
        \draw[secantB] (-\ux,\uy) -- (\ux,\uy);
        \draw[secantB] (-\ux,-\uy) -- (-\ux,\uy);
        \draw[secantB] (-\ux,-\uy) -- (-3.3,{-\uy-(3.3-\ux)});
      }
    }

    \foreach \ax/\ay in {2.43/2.06, 2.59/1.98, 2.515/2.09} {
      \foreach \bx/\by in {1.93/1.06, 2.08/0.98, 2.01/0.91} {
        \pgfmathsetmacro{\ux}{\bx}
        \pgfmathsetmacro{\uy}{\bx - \ax + \ay}
        \draw[secantR] (\ux,\uy) -- (3.35,{\uy+3.35-\ux});
        \draw[secantR] (\ux,-\uy) -- (\ux,\uy);
        \draw[secantR] (-\ux,-\uy) -- (\ux,-\uy);
        \draw[secantR] (-\ux,-\uy) -- (-3.3,{-\uy-(3.3-\ux)});
      }
    }
  \end{scope}

  \draw[clusterB] (2,2.5) circle[radius=0.18];
  \foreach \ax/\ay in {1.93/2.57, 2.09/2.52, 2.02/2.415}
    \node[apexB] at (\ax,\ay) {};
  \node[RoyalBlue, clusterlabel, anchor=south] at (2,2.72) {$A_1^1$};

  \draw[clusterB] (1,2) circle[radius=0.18];
  \foreach \bx/\by in {0.92/2.06, 1.08/2.02, 0.99/1.91}
    \node[apexB] at (\bx,\by) {};
  \node[RoyalBlue, clusterlabel, anchor=east] at (0.78,2) {$A_2^1$};

  \draw[clusterR] (2.5,2) circle[radius=0.18];
  \foreach \ax/\ay in {2.43/2.06, 2.59/1.98, 2.515/2.09}
    \node[apexR] at (\ax,\ay) {};
  \node[BrickRed!85!black, clusterlabel, anchor=west] at (2.72,2) {$A_1^2$};

  \draw[clusterR] (2,1) circle[radius=0.18];
  \foreach \bx/\by in {1.93/1.06, 2.08/0.98, 2.01/0.91}
    \node[apexR] at (\bx,\by) {};
  \node[BrickRed!85!black, clusterlabel, anchor=west] at (2.22,1) {$A_2^2$};

  \draw[zoombox] (-1.82,-1.86) rectangle (-1.03,-1.04);
  \node[ptlabel, anchor=south west] at (-1.00,-1.05) {$\mathcal{W}$};

\end{tikzpicture}}
  \caption{The signed product grid of \cref{prop:signed-local-product-grid}
  for \(d=2\) and \(m=3\). The sets \(A_1^1,A_2^1\) (blue) and
  \(A_1^2,A_2^2\) (red) lie in the positive chart \(\mathcal C_1\). The
  signed max-tropical lines spanned by pairs in \(A_1^1\times A_2^1\) and in
  \(A_1^2\times A_2^2\) meet the box \(\mathcal W\subset\mathcal C_2\) in
  \(m^2\) vertical and \(m^2\) horizontal segments, forming the product grid.}
  \label{fig:signed-product-grid}
\end{figure}

Before proving the proposition, we derive its counting consequence.

\begin{corollary}\label{cor:signed-sign-vector-count}
  Let \(d\geq1\) be fixed. For every \(t\geq d^2\), there is a \(t\)-point
  sequence \(\a\) in min-general position in \(\strop^d\) whose signed
  auxiliary arrangement \(\mathcal B^{\pm}(\a)\) has
  \(\Omega(t^{d^2})\) sign vectors.
\end{corollary}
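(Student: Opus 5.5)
The plan is to mirror the proof of \cref{cor:cell-count-unsigned}, using \cref{prop:signed-local-product-grid} in place of \cref{prop:unsigned-local-product-grid}. First treat the case \(t=d^2m\) for an integer \(m\geq1\). Let \(A_s^j\) be the sets given by \cref{prop:signed-local-product-grid}, and let \(\a\) be any ordering of the \(t\) points of \(\bigcup_{j,s}A_s^j\); by item~(i), \(\a\) is in min-general position.

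Fix \(j\in[d]\). By item~(ii), the \(m^d\) selections \(\q\in A_1^j\times\cdots\times A_d^j\) span signed max-tropical hyperplanes, and each meets \(\mathcal W\) in the slice \(\{x\in\mathcal W:|x_j|=c_j(\q)\}\). By item~(iii), these \(m^d\) slices are pairwise distinct. Working in modulus coordinates, \(\mathcal W\simeq(-1,1)^d\), so the slices for all \(j\) cut \(\mathcal W\) into \((m^d+1)^d\) open boxes.

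The key step is to check that distinct boxes receive distinct sign vectors \(\sigma^{\mathcal B}\). Take two boxes separated by some slice, coming from a spanned hyperplane \(H_{\a^I}\) with \(I\) the index set of the selection \(\q\). The function \(x\mapsto\tdet(\a^I,x)\) takes sign \(+1\) on one connected component of the complement of \(H_{\a^I}\) and \(-1\) on the other, as recalled in \Cref{s:signed chirotopes and hyperplanes}. So I need the two boxes to lie in different components. Inside \(\mathcal W\), the hyperplane coincides with the affine slice \(|x_j|=c_j(\q)\), so the segment joining the two boxes crosses \(H_{\a^I}\) exactly once, and transversally. I would argue that the sign of \(\tdet(\a^I,x)\) flips across such a crossing. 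Near a point of the slice, the balanced sum is attained by exactly two terms of opposite signs, and crossing the slice swaps which of these two terms dominates. Hence the sign changes.

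This local sign-flip is the main obstacle. It must be verified from the explicit cofactor expansion~\eqref{eq:determinant-expansion} rather than from connectivity alone, because the two components could in principle re-meet outside \(\mathcal W\). The expansion formula makes the flip direct, and it also gives that \(\sgn\tdet\) is locally constant off \(H_{\a^I}\). It follows that every box has a well-defined sign vector, for points off all hyperplanes, and that separated boxes have distinct vectors. This yields at least \((m^d+1)^d\geq m^{d^2}=\Omega(t^{d^2})\) sign vectors.

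For general \(t\geq d^2\), set \(m=\lfloor t/d^2\rfloor\) and \(t_0=d^2m\geq t/2\). Build \(\a_0\) as above, then extend it to \(t\) points in min-general position by \cref{lem:signed-generic-extension-existence}. The arrangement \(\mathcal B^{\pm}(\a)\) contains the hyperplanes of \(\mathcal B^{\pm}(\a_0)\), and the sign vectors for \(\a_0\) are coordinate projections of those for \(\a\). Hence \(\mathcal B^{\pm}(\a)\) still realizes \(\Omega(t_0^{d^2})=\Omega(t^{d^2})\) sign vectors.
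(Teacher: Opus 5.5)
Your proof is correct and follows the paper's argument step for step. The steps are the $(m^d+1)^d$ boxes cut out of $\mathcal W$ by the grid of \cref{prop:signed-local-product-grid}, distinct sign vectors for points in different boxes, and padding to $t$ points via \cref{lem:signed-generic-extension-existence}. Your explicit local sign-flip check usefully spells out what the paper compresses into an appeal to \Cref{s:signed chirotopes and hyperplanes}. It follows from item~(c) in the proof of \cref{lem:signed-local-coordinate-slices}: the cofactors in \eqref{eq:determinant-expansion} are exactly the signed Cramer vector, so the expansion terms are a common scalar multiple of $w_k^{\odot-1}\odot x_k$.

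One wording fix: a box need not carry a single full sign vector, since other hyperplanes of $\mathcal B^{\pm}(\a)$ may cut through it. Picking one point per box off all hyperplanes suffices, because only the grid coordinates are needed to tell boxes apart.
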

\begin{proof}
  First suppose that \(t=d^2m\) for an integer \(m\geq1\). Let \(A_s^j\) be the sets constructed in \cref{prop:signed-local-product-grid}. For every \(j\in[d]\), the \(m^d\) selections \(\q\in A_1^j\times\cdots\times A_d^j\) determine \(m^d\) signed max-tropical hyperplanes that cut \(\mathcal W\) along \(m^d\) parallel affine hyperplanes \(|x_j|=c_j(\q)\). These cuts for all $j \in [d]$ decompose \(\mathcal W\) into \((m^d+1)^d\) boxes.

Let \(\a\) be any ordering of the \(t\) points of
\(\bigcup_{j,s\in[d]}A_s^j\); it is in min-general position.
Any two boxes lie on different sides of some hyperplane of \(\mathcal B^{\pm}(\a)\).
By \Cref{s:signed chirotopes and hyperplanes}, points in different boxes
therefore have distinct sign vectors relative to \(\mathcal B^{\pm}(\a)\).
Thus \(\mathcal B^{\pm}(\a)\) realizes at least
\((m^d+1)^d\geq m^{d^2}=\Omega(t^{d^2})\)
distinct sign vectors.

  Now let \(t\geq d^2\) be arbitrary, and put \(m\coloneqq\lfloor
  t/d^2\rfloor\) and \(t_0\coloneqq d^2m\), so that \(t/2\leq t_0\leq t\). Let \(\a_0\) be the
  \(t_0\)-point sequence constructed above for this \(m\). If \(t>t_0\), then by
  \cref{lem:signed-generic-extension-existence} we may extend \(\a_0\) to a
  \(t\)-point sequence \(\a\) in min-general position; otherwise we put \(\a\coloneqq\a_0\). Since
  \(\mathcal B^{\pm}(\a)\) contains all hyperplanes of
  \(\mathcal B^{\pm}(\a_0)\), it realizes at least
  \(\Omega(t_0^{d^2})=\Omega(t^{d^2})\) sign vectors.
\end{proof}

The proof of \cref{prop:signed-local-product-grid} follows the same approach as \cref{sec:basic-config-unsigned}. The unsigned construction uses min-tropical hyperplanes, whereas \(\mathcal B^{\pm}\) consists of signed max-tropical hyperplanes.\footnote{The reader may wonder why our auxiliary arrangements use min-hyperplanes in the unsigned setting and max-hyperplanes in the signed setting. Recall that in the unsigned setting we work with combinatorial types of sequences of max-hyperplanes whereas in the signed setting we work with combinatorial types of sequences of points (which are easier to define from scratch). A signed point sequence \(\q=(q^1,\ldots,q^t)\) is dual to a sequence of signed max-hyperplanes with apices \((q^1)^{\odot-1},\ldots,(q^t)^{\odot-1}\). If we also started from max-hyperplane sequences in the signed setting, the two constructions could be presented in a unified way.}
Negating modulus coordinates turns the minimum conditions into maximum conditions, so we reflect the unsigned basic configuration and reuse its geometric estimates.

\medskip

As in \cref{sec:basic-config-unsigned}, we put \(r\coloneqq d+1\) and represent every point \(x\in\TPpmfin^d\) by its homogeneous representative with \(x_r=\tone\).

\medskip

Fix \(j\in[d]\) and let \(j=i_0,i_1,\ldots,i_d=r\) be the ordering of \([r]\) fixed in \cref{sec:basic-config-unsigned}. With the same vectors \(e_1,\ldots,e_d\) and \(e_r=-(e_1+\cdots+e_d)\), we define \(\bar u^j\coloneqq-4(e_j+e_r)\) and, for every \(s\in[d]\), \(\bar p^{j,s}\coloneqq\bar u^j+4e_{i_{s-1}}+4e_{i_s}\). Let \(\bar q^{j,s}\in\mathcal C_1\) have modulus vector \(\bar p^{j,s}\). Put \(\rho\coloneqq1/(2r)\) and \(\mathcal V_s^j\coloneqq\{q\in\mathcal C_1:\|\,|q|-\bar p^{j,s}\,\|_\infty<\rho\}\). For
\(\q=(q^1,\ldots,q^d)\in\mathcal V_1^j\times\cdots\times\mathcal V_d^j\),
write \(p^s\coloneqq|q^s|\) (so \(p^s_r=0\)), and put
\begin{equation}\label{eq:signed-telescope}
  c_j(\q)\coloneqq\sum_{s=1}^{d}
     \pth{p^s_{i_{s-1}}-p^s_{i_s}}.
\end{equation}

\begin{lemma}
\label{lem:signed-local-coordinate-slices}
  Any sequence \(\q=(q^1,\ldots,q^d)\in\mathcal V_1^j\times\cdots\times\mathcal V_d^j\) in min-general position spans a signed max-tropical hyperplane whose intersection with \(\mathcal W\) is nonempty and equal to \(\{x\in\mathcal W:|x_j|=c_j(\q)\}\).
\end{lemma}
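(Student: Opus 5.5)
We mirror the proof of \cref{lem:unsigned-local-coordinate-slices}, passing to moduli and then handling the signs. Write \(p^s=|q^s|\), so \(p^s\) lies within \(\rho\) of \(\bar p^{j,s}\) in sup-norm. Define \(u\) by \(u_r=0\) and \(u_{i_{s-1}}-u_{i_s}=p^s_{i_{s-1}}-p^s_{i_s}\) for \(s\in[d]\). The same telescoping estimate as in the unsigned case gives \(\|u-\bar u^j\|_\infty\le 2d\rho<1\). Note that \(\bar u^j=-4(e_j+e_r)\) and \(\bar p^{j,s}\) are the negatives of their unsigned counterparts. Hence \(\bar p^{j,s}-\bar u^j\) has its \(i_{s-1}\)th and \(i_s\)th coordinates equal and larger by \(4\) than all others.

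We then look for an apex \(w\) with \(|w|=u\) and signs to be chosen, and show \(q^s\in\Hsmax(w)\) for every \(s\in[d]\). For the modulus condition, the argument of the unsigned proof with min replaced by max shows that the maximum of \(|q^s_k|-u_k\) over \(k\in[r]\) is attained exactly at \(k\in\{i_{s-1},i_s\}\). For the sign condition, all \(q^s\) lie in \(\mathcal C_1\), so every \(\sgn(q^s_k)=+1\). The condition therefore reduces to \(\sgn(w_{i_{s-1}})=-\sgn(w_{i_s})\) for each \(s\). Since \(i_0,\ldots,i_d\) is a path through all of \([r]\), these constraints have a unique solution up to global sign: \(\sgn(w_{i_k})=(-1)^{d-k}\), normalized by \(\sgn(w_r)=+1\). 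So \(\q\subset\Hsmax(w)\). By \cref{lem:signed-cramer-intersection}, using min-general position of \(\q\), this \(w\) is the apex of the spanned hyperplane.

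Next we intersect with \(\mathcal W\). For \(x\in\mathcal W\) we have \(\|\,|x|\,\|_\infty<1\). Since \(|x|-u=(O-\bar u^j)+|x|+(\bar u^j-u)\), and \(O-\bar u^j\) has \(j\)th and \(r\)th coordinates larger by \(4\) than the rest, the maximum of \(|x_k|-u_k\) is attained only among \(k\in\{j,r\}\). Thus \(x\in\Hsmax(w)\) if and only if two things hold:
\begin{itemize}
\item the modulus condition \(|x_j|-u_j=|x_r|-u_r\), i.e.\ \(|x_j|=u_j=c_j(\q)\);
\item the sign condition \(\sgn(w_j)\sgn(x_j)=-\sgn(w_r)\sgn(x_r)\).
\end{itemize}
Here \(x_r=\tone\) and \(w_r\) is positive, and \(\sgn(w_j)=\sgn(w_{i_0})=(-1)^d\). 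The sign condition therefore reads \(\sgn(x_j)=(-1)^{d+1}\), which holds in \(\mathcal C_2\) by the choice of that chart. (This is the main point to check: the parity of the path length forces the choice of \(\mathcal C_2\).) Finally, \(c_j(\q)=u_j=(u-\bar u^j)_j\) has absolute value less than \(1\), so the slice is nonempty.

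The main obstacle is the sign bookkeeping. One must verify that the Cramer apex really has these alternating signs and that the balance condition in \(\mathcal W\) picks exactly the pair \(\{j,r\}\) with compatible signs. The unique-maximum property on \(\mathcal W\) makes this a single two-term check, so I expect it to go through.

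A caveat on the statement: \(c_j(\q)\) is a modulus, so the slice \(|x_j|=c_j(\q)\) should be read in the sense of moduli. The value \(c_j(\q)\) may be negative, but it still lies in \((-1,1)\) as required.
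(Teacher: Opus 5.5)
Your proposal is correct and follows essentially the same route as the paper. It uses the same apex modulus \(u\) obtained by telescoping along the path \(i_0,\ldots,i_d\), and the same alternating signs \(\sgn(w_{i_k})=(-1)^{d-k}\), which the paper simply posits and you derive from the opposite-sign constraints. It also obtains uniqueness from \cref{lem:signed-cramer-intersection} and performs the same two-term balance check on \(\mathcal W\), using that points of \(\mathcal C_2\) have signs \((-1)^{d+1}\).
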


\begin{proof}
  Let \(u\in\R^r\) be the vector determined by \(u_r=0\) and \(u_{i_{s-1}}-u_{i_s}=p^s_{i_{s-1}}-p^s_{i_s}\) for \(s\in[d]\); summing these equations gives \(u_j=c_j(\q)\).
  The vectors \(-\bar u^j\), \(-\bar p^{j,s}\), \(-p^s\) and \(-u\) play the roles of \(\bar u^j\), \(\bar p^{j,s}\), \(p^s\) and \(u\) in the proof of \cref{lem:unsigned-local-coordinate-slices}, so negating the estimates in that proof gives:
  \begin{enumerate}[label=(\alph*)]
    \item\label{it:signed-slice-points} for every \(s\in[d]\), \(\operatorname*{arg\,max}_{k\in[r]}(p^s_k-u_k)=\{i_{s-1},i_s\}\);
    \item\label{it:signed-slice-apex} \(u_j\in(-1,1)\);
    \item\label{it:signed-slice-window} for every \(x\in\mathcal W\), the maximum of \(|x_k|-u_k\) over \(k\in[r]\) is attained only at indices \(k\in\{j,r\}\).
  \end{enumerate}

  Let \(w\in\TPpmfin^d\) be given by \(w_{i_s}=\bigl((-1)^{d-s},u_{i_s}\bigr)\), \(s=0,\ldots,d\). Then \(q^s\in\Hsmax(w)\) for every \(s\in[d]\): by~\ref{it:signed-slice-points}, the maximum of \(|q^s_k|-|w_k|\) over \(k\in[r]\) is attained precisely at \(i_{s-1}\) and \(i_s\), and since \(q^s\in\mathcal C_1\), the corresponding term signs \(\sgn(w_{i_{s-1}})\sgn(q^s_{i_{s-1}})=(-1)^{d-s+1}\) and \(\sgn(w_{i_s})\sgn(q^s_{i_s})=(-1)^{d-s}\) are opposite. Since \(\q\) is in min-general position, \cref{lem:signed-cramer-intersection} shows that \(\Hsmax(w)\) is the unique hyperplane spanned by \(\q\).

  Now let \(x\in\mathcal W\). By~\ref{it:signed-slice-window}, only the terms indexed by \(j\) and \(r\) can have maximal modulus in \(\bigoplus_{k=1}^r w_k^{\odot-1}\odot x_k\). Since \(x\in\mathcal C_2\), these two terms are \(\bigl(-1,|x_j|-u_j\bigr)\) and \(\tone\). Their signs are opposite, so the sum is balanced exactly when \(|x_j|=u_j=c_j(\q)\). Hence
  \[
    \Hsmax(w)\cap\mathcal W=\{x\in\mathcal W:|x_j|=c_j(\q)\},
  \]
  which is nonempty by~\ref{it:signed-slice-apex}.
\end{proof}

\begin{figure}[t]
  \centering
  \resizebox{!}{4.4cm}{\begin{tikzpicture}[
    x=0.78cm, y=0.78cm,
    >=latex,
    line cap=round, line join=round,
    font=\small,
    lineB/.style={RoyalBlue, line width=1pt},
    localB/.style={RoyalBlue, line width=1.1pt},
    neighB/.style={draw=RoyalBlue!70!black, fill=RoyalBlue!12, line width=0.55pt},
    ghost/.style={black!35, line width=0.6pt, dashed},
    apex/.style={circle, fill=RoyalBlue!85!black, inner sep=0pt, minimum size=3.4pt},
    pointpt/.style={circle, fill=black!85, inner sep=0pt, minimum size=2.9pt},
    intpt/.style={circle, fill=Goldenrod, draw=black, line width=0.4pt,
                  inner sep=0pt, minimum size=4.2pt},
    bdry/.style={line width=1.1pt, black!82, line cap=round},
    chartedge/.style={draw=black!30, thin},
    vaxis/.style={<->, black!50, line width=0.6pt},
    boxB/.style={draw=black!55, line width=0.7pt, dashed},
    ptlabel/.style={font=\scriptsize, fill=white, inner sep=1pt, outer sep=1pt},
    chartlabel/.style={font=\scriptsize, text=black!65},
    panellab/.style={font=\itshape, anchor=south},
  ]

  \begin{scope}
    \node[panellab] at (2,4.55) {(a) first chart \(\mathcal C_1\)};
    \draw[chartedge] (0,0) rectangle (4,4);
    \draw[bdry] (4,0) -- (0,0) -- (0,4);
    \node[ptlabel, anchor=north east] at (-0.05,-0.05) {$\tzero$};

    \coordinate (a) at (1.5,1.9);
    \draw[ghost] (a) -- (1.5,0.12);
    \draw[lineB] (a) -- (3.6,4.0);
    \draw[lineB] (a) -- (0.0,1.9);
    \node[apex] at (a) {};

    \draw[neighB] (2.4,2.8) ellipse [x radius=0.34, y radius=0.23];
    \node[pointpt] at (2.4,2.8) {};
    \node[ptlabel, anchor=west] at (2.78,2.85) {$\mathcal V_1^1\ni\bar q^{1,1}$};

    \draw[neighB] (0.72,1.9) ellipse [x radius=0.3, y radius=0.21];
    \node[pointpt] at (0.72,1.9) {};
    \node[ptlabel, anchor=north] at (1.10,1.62) {$\mathcal V_2^1\ni\bar q^{1,2}$};

    \node[RoyalBlue, ptlabel, anchor=south west] at (2.78,3.35)
      {$\{1,2\}$};
    \node[RoyalBlue, ptlabel, anchor=south east] at (1.35,2.05)
      {$\{2,3\}$};
  \end{scope}

  \begin{scope}[shift={(6.0,0)}]
    \node[panellab] at (2,4.55) {(b) second chart \(\mathcal C_2\)};
    \draw[chartedge] (0,0) rectangle (4,4);
    \draw[bdry] (0,4) -- (4,4) -- (4,0);
    \node[ptlabel, anchor=south west] at (4.05,4.05) {$\tzero$};

    \coordinate (b) at (2.8,1.5);
    \draw[ghost] (b) -- (4.0,1.5);
    \draw[localB] (b) -- (2.8,4.0);
    \draw[lineB] (b) -- (1.3,0.0);
    \node[apex] at (b) {};

    \draw[boxB] (2.4,2.85) rectangle (3.3,3.8);
    \node[ptlabel, anchor=west] at (3.36,3.10) {$\mathcal W$};
    \node[intpt] at (3.05,3.35) {};
    \node[ptlabel, anchor=south] at (3.05,3.43) {$|x|=0$};
    \node[RoyalBlue, ptlabel, anchor=south east] at (2.7,3.80) {$|x_1|=c_1(\q)$};
  \end{scope}

\end{tikzpicture}}
  \caption{\Cref{lem:signed-local-coordinate-slices} for \(d=2\), \(j=1\):
  spanning points in \(\mathcal C_1\), and the signed line they span, which
  meets \(\mathcal W\subset\mathcal C_2\) in the segment
  \(\{x\in\mathcal W:|x_1|=c_1(\q)\}\).}
  \label{fig:signed-local-coordinate-slice}
\end{figure}

\begin{proof}[Proof of \cref{prop:signed-local-product-grid}]
  For each \(j\in[d]\), we let \(\{\mathcal V_s^j\}_{s\in[d]}\) denote the sets constructed for \cref{lem:signed-local-coordinate-slices}, and pick in each \(\mathcal V_s^j\) an \(m\)-element subset \(A_s^j\) such that the values \(c_j(\q)\), as \(\q\) ranges over \(A_1^j\times\cdots\times A_d^j\), are pairwise distinct (here we use the proof of \cref{lem:unsigned-distinct-slices}, which applies verbatim since \(c_j(\q)\) is given by the same linear formula~\eqref{eq:signed-telescope} in the moduli of the points). Sufficiently small perturbations of the points of \(\cup_{j,s\in[d]}A_s^j\) keep them in their sets \(\mathcal V_s^j\) and preserve, for each \(j\), the pairwise distinctness
  of the values \(c_j(\q)\), as \(\q\) ranges over \(A_1^j\times\cdots\times A_d^j\). By \cref{lem:signed-generic-extension-existence}, applied to the empty sequence, we may therefore perturb all \(d^2m\) points into min-general position. This gives~(i) and~(iii), and~(ii) follows from \cref{lem:signed-local-coordinate-slices}.
\end{proof}

\subsection{Completing the proof of Theorem~\ref{thm:main-signed}}

We can now prove the lower bound of Theorem~\ref{thm:main-signed}.

\begin{proposition}\label{prop:signed-lower-bound}
  As $n \to \infty$, for every fixed $d\geq 1$,
  \[\log_2 \#\OTT(n,d) \ge d^2\,n\log_2 n -d^2\,n\log_2\log_2 n -\Oh(n).\]
\end{proposition}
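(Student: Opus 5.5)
The proof mirrors that of \cref{prop:unsigned-lower-bound}, with the signed auxiliary arrangement $\mathcal B^{\pm}$ in place of $\mathcal B$ and chirotopes in place of combinatorial types.

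For sufficiently large $n$, set $t\coloneqq\lfloor n/\log_2 n\rfloor$, so that $t\ge d^2$. By \cref{cor:signed-sign-vector-count} there is a $t$-point sequence $\a$ in min-general position in $\strop^d$ whose signed auxiliary arrangement $\mathcal B=\mathcal B^{\pm}(\a)$ realizes a set $\Sigma$ of at least $c_d\,t^{d^2}$ sign vectors, with $c_d>0$ depending only on $d$.

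For each $\sigma\in\Sigma$, the region $\{x:\sigma^{\mathcal B}(x)=\sigma\}$ must contain a nonempty open subset of some sign chart $\mathcal C_s$. This holds in our construction: the realized sign vectors come from points in the interiors of the grid boxes in $\mathcal W\subset\mathcal C_2$. The sign of each $\tdet(\a^I,x)$ is locally constant off the hyperplane, since the term of maximal modulus persists under small perturbation of $|x|$. To be safe, I would let $\Sigma$ consist only of the sign vectors coming from box interiors; there are still $(m^d+1)^d=\Omega(t^{d^2})$ of them. Fix such an open set $O_\sigma$ for each $\sigma$.

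Given any sequence $(\sigma_1,\ldots,\sigma_{n-t})\in\Sigma^{n-t}$, apply \cref{lem:signed-generic-extension-existence} with $O\coloneqq O_{\sigma_1}\times\cdots\times O_{\sigma_{n-t}}$. This produces a generic extension $(\a,v^1,\ldots,v^{n-t})$ with $\sigma^{\mathcal B}(v^\ell)=\sigma_\ell$ for every $\ell$. Being in min-general position, this extension is an $n$-point sequence defining an element of $\OTT(n,d)$. By \cref{lem:signed-extension}, distinct sequences of sign vectors yield distinct signed tropical chirotopes. Hence
\[
\#\OTT(n,d)\ \ge\ \#\Sigma^{\,n-t}\ \ge\ \bigl(c_d\,t^{d^2}\bigr)^{n-t}.
\]

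Finally, take logarithms. Since $\log_2 t=\log_2 n-\log_2\log_2 n-\Oh(1)$ and $t\,d^2\log_2 t=\Oh(n)$, we get
\[
(n-t)\bigl(d^2\log_2 t+\log_2 c_d\bigr)\ \ge\ d^2 n\log_2 n-d^2 n\log_2\log_2 n-\Oh(n).
\]

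The only step requiring care is the openness of the regions with fixed sign vector, which \cref{lem:signed-generic-extension-existence} needs. It is handled by working only with box interiors inside the single chart $\mathcal C_2$. The rest is a direct transcription of the unsigned argument.
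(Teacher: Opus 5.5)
Your proposal is correct and follows the paper's proof essentially step for step: you take $t=\lfloor n/\log_2 n\rfloor$, use \cref{cor:signed-sign-vector-count} for the base sequence, realize each sign-vector sequence via \cref{lem:signed-generic-extension-existence}, separate the resulting extensions with \cref{lem:signed-extension}, and conclude with the same logarithmic estimate. Restricting $\Sigma$ to sign vectors from box interiors is harmless but unnecessary: the signs are locally constant off the hyperplanes, so every realized sign vector is attained on a nonempty open subset of a sign chart, which is exactly the one-line justification the paper gives.
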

\begin{proof}
  For sufficiently large \(n\), set $t=\left\lfloor\frac{n}{\log_2 n}\right\rfloor$. By \cref{cor:signed-sign-vector-count}, there is a \(t\)-point sequence \(\a\) in min-general position in \(\strop^d\) whose signed auxiliary
  arrangement \(\mathcal B=\mathcal B^{\pm}(\a)\) realizes a set \(\Sigma\) of at least
  \(c_d\,t^{d^2}\) sign vectors, for some constant \(c_d>0\)
  depending only on \(d\). For each sequence of sign vectors \(\sigma_1,\ldots,\sigma_{n-t}\in\Sigma\), we choose \((v^1,\ldots,v^{n-t})\) with \(\sigma^{\mathcal B}(v^\ell)=\sigma_\ell\) so that \((\a,v^1,\ldots,v^{n-t})\) is in min-general position; this is possible by \cref{lem:signed-generic-extension-existence}, since each sign vector in \(\Sigma\) is attained on a nonempty open subset of a sign chart. By \cref{lem:signed-extension}, distinct sign-vector sequences give extensions with distinct signed tropical chirotopes, so we obtain at least
  \(\card{\Sigma}^{\,n-t}\) distinct chirotopes. Therefore $\card{\OTT(n,d)} \;\geq\; \card{\Sigma}^{\,n-t} \;\geq\; \bigl(c_d\,t^{d^2}\bigr)^{n-t}$, and taking logarithms, using \(t=n/\log_2 n-\Oh(1)\), gives $\log_2\card{\OTT(n,d)} \geq d^2\,n\log_2 n -d^2\,n\log_2\log_2 n-\Oh(n)$ as claimed.
\end{proof}

\section{Parity coherent matching fields and parity labeled order types}\label{sec:parity}

In this section, we introduce parity coherent matching fields and parity
labeled order types. These are the coherent matching fields realized by weight
matrices with entries of the form \(\eta s^w\), where \(\eta\in\{\pm1\}\),
\(w\in\Z\) and \(s\) is sufficiently large, and the signed tropical 
labeled order types of point sequences whose coordinates have moduli of the
same form. We then prove \cref{thm:main-parity}.

\medskip

As explained in \cref{sec:signed-real-limits}, signed tropical numbers
describe the asymptotic behavior of signed monomials~\(\varepsilon t^a\). Signed parity numbers similarly describe the
behavior of double exponentials: a coordinate
\((\varepsilon,\eta s^w)\) stands for the real number
\(\varepsilon t^{\eta s^w}\) with \(t\gg s\gg1\), and two successive
tropicalizations recover its data. Letting \(t\to\infty\) with \(s\)
fixed gives the signed tropical number \((\varepsilon,\eta s^w)\),
and then letting \(s\to\infty\) records the parity \(\eta\) and the
priority \(w\) of its modulus. We call this \emph{double tropicalization}.

\subsection{Parity numbers and comparison of weights}\label{sec:parity-numbers}

The set of \emph{parity numbers} is $\P\coloneqq\{0\}\cup\{\eta s^w:\eta\in\{\pm1\},\ w\in\Z\}$, where \(\eta s^w\) denotes the function \(s\mapsto\eta s^w\) on \((1,\infty)\)
and \(0\) denotes the zero function. For a nonzero parity number, we call
\(\eta\) its \emph{parity}\footnote{The terminology is motivated by parity games, where the standard encoding
as mean-payoff games assigns the payment \((-1)^w s^w\)
to an integer priority \(w\), for sufficiently large \(s\).
Here we allow \(\eta\) and \(w\) to be chosen independently,
but retain the term ``parity'' for \(\eta\).}
and \(w\) its \emph{priority}, which is
the exponent of \(s\). For each fixed \(s>1\), a parity number evaluates to a real number.

\medskip

Throughout this section, \(\sum\) denotes ordinary addition, that is,
the tropical product \(\odot\). A sum of parity numbers need not be a
parity number, so we work in the ring \(\Z[s^{\pm1}]\supseteq\P\) of finite sums
of integer powers of \(s\) with integer coefficients (Laurent
polynomials). For a nonzero \(A\in\Z[s^{\pm1}]\), let \(\sgn A\) be the
sign of its leading coefficient; it is the sign of \(A(s)\) for all
sufficiently large \(s\). We order \(\Z[s^{\pm1}]\) by \(A<B\) if
\(\sgn(B-A)=+1\), equivalently if \(A(s)<B(s)\) for all sufficiently
large \(s\). This order is total. On parity numbers it reads
\begin{equation}\label{eq:parity-order}
  \eta s^w<\eta' s^{w'}
  \quad\Longleftrightarrow\quad
  \begin{cases}
    \eta=-1,\ \eta'=+1, &\text{or}\\
    \eta=\eta'=+1,\ w<w', &\text{or}\\
    \eta=\eta'=-1,\ w>w',
  \end{cases}
\end{equation}
and \(0\) lies between the parity numbers of parity \(-1\) and those of
parity \(+1\).

\medskip

We will use the following simple lemma.

\begin{lemma}\label{lem:parity-comparison}
Let \(E\) be a finite index set, and let \((a_e)_{e\in E}\) be a family of nonzero parity numbers,
with \(a_e=\eta_e s^{w_e}\) and pairwise distinct priorities \(w_e\). For distinct subsets
\(F,F'\subseteq E\), let \(e^\star\) be the element of
\(F\mathbin{\triangle}F'\) with largest priority. Then
\begin{equation}\label{eq:parity-comparison}
  \sgn\Bigl(\sum_{e\in F}a_e-\sum_{e\in F'}a_e\Bigr)
  =\eta_{e^\star}\bigl(\mathbf{1}_{F}(e^\star)-\mathbf{1}_{F'}(e^\star)\bigr).
\end{equation}
In particular, these subset sums are pairwise distinct, and there is
\(s_0\) such that, for every \(s>s_0\), their values at \(s\) are ordered
as in \(\Z[s^{\pm1}]\).
\end{lemma}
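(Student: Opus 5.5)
The plan is to reduce the comparison to a single signed Laurent polynomial and read off its leading term. I would write
\[
  D\coloneqq\sum_{e\in F}a_e-\sum_{e\in F'}a_e=\sum_{e\in E}\bigl(\mathbf{1}_{F}(e)-\mathbf{1}_{F'}(e)\bigr)\,\eta_e s^{w_e}.
\]
The coefficient $\mathbf{1}_{F}(e)-\mathbf{1}_{F'}(e)$ vanishes exactly when $e\notin F\mathbin{\triangle}F'$. Otherwise it equals $\pm1$.

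Since $F\neq F'$, the symmetric difference is nonempty, so $e^\star$ is well defined. Because the priorities are pairwise distinct, the surviving terms of $D$ are monomials with pairwise distinct exponents, each carrying a coefficient in $\{\pm1\}$. No cancellation can occur. Hence $D$ is a nonzero element of $\Z[s^{\pm1}]$, and its leading monomial is the one with the largest exponent among the surviving terms. That monomial is indexed by $e^\star$, and its coefficient is $\eta_{e^\star}\bigl(\mathbf{1}_{F}(e^\star)-\mathbf{1}_{F'}(e^\star)\bigr)$. By the definition of $\sgn$ on $\Z[s^{\pm1}]$, this proves~\eqref{eq:parity-comparison}.

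For the consequences, I would argue as follows.
\begin{enumerate}
\item The right-hand side of~\eqref{eq:parity-comparison} is $\pm1$ and never $0$, so $D\neq0$. This gives pairwise distinctness of the subset sums in $\Z[s^{\pm1}]$.
\item For each ordered pair $(F,F')$, the nonzero Laurent polynomial $D$ has the sign of its leading coefficient for all $s$ beyond some threshold, as recalled in the text above.
\item There are only finitely many pairs of subsets of the finite set $E$. Taking $s_0$ to be the maximum of these thresholds, every $s>s_0$ orders the evaluated subset sums exactly as they are ordered in $\Z[s^{\pm1}]$.
\end{enumerate}

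None of these steps is a real obstacle. The only point that needs care is noting that distinct priorities rule out cancellation, which is what makes the leading term of $D$ come from $e^\star$.
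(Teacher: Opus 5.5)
Your proof is correct and follows essentially the same route as the paper. Both arguments cancel the terms common to $F$ and $F'$, read the sign off the leading monomial $e^\star$ (no cancellation can occur because the priorities are distinct), and take one threshold $s_0$ over the finitely many pairs of subsets.
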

\begin{proof}
The terms indexed by \(F\cap F'\) cancel. The remaining terms have
pairwise distinct exponents, and the largest one is \(w_{e^\star}\), with
coefficient the right-hand side of \eqref{eq:parity-comparison}. There
are only finitely many pairs of subsets, so one threshold \(s_0\)
suffices for the last assertion.
\end{proof}

\subsection{Parity coherent matching fields}\label{sec:parity-cmf}

An \emph{\(n\times r\) parity matrix} is a matrix
\(A=(a_{i,k})\in\P^{n\times r}\) with nonzero entries
\(a_{i,k}=\eta_{i,k}s^{w_{i,k}}\) whose \(nr\) priorities \(w_{i,k}\)
are pairwise distinct.
For \(I\in\binom{[n]}r\) and \(M\in\mathcal M_I\), we define the weight of \(M\) with respect to \(A\) as the Laurent polynomial $\omega_A(M)\coloneqq\sum_{(i,k)\in M}a_{i,k}$. (Here we use the matching notation of \cref{sec:contributions}.) By \cref{lem:parity-comparison},
distinct matchings in \(\mathcal M_I\) have distinct weights, so there
is a unique matching \(M_I\) of largest weight (in the order on
Laurent polynomials). For a parity matrix \(A\), let
\(\Lambda(A)\coloneqq(M_I)_{I\in\binom{[n]}r}\), where \(M_I\) is the
matching of largest weight \(\omega_A\) in \(\mathcal M_I\). A
matching field of type \((n,r)\) is \emph{parity coherent} if it
equals \(\Lambda(A)\) for some \(n\times r\) parity matrix \(A\). We
write \(\CMFP(n,r)\) for the set of parity coherent matching fields of
type~\((n,r)\).

\medskip

 For fixed \(s>1\), evaluating the entries of \(A\) gives a real matrix
 \(A(s)\). For each \(I\in\binom{[n]}r\), the matchings in
 \(\mathcal M_I\) maximizing \(\sum_{(i,k)\in M}a_{i,k}(s)\) need not be
 unique, and they may change as \(s\) grows. However, by the last
 assertion of \cref{lem:parity-comparison}, they stabilize for all
sufficiently large \(s\) and every \(I\), and coincide with the matching
 \(M_I\) defined by the order on Laurent polynomials. Thus \(\Lambda(A)\) is a coherent matching field, realized by
\(A(s)\) for all sufficiently large \(s\). Consequently,
\(\CMFP(n,r)\subseteq\CMFT(n,r)\) for every \(n\ge r\ge2\). It turns out that for every fixed \(r\ge3\), as \(n\to\infty\), the parity coherent
matching fields of type \((n,r)\) are actually a vanishingly small
subset of the coherent matching fields of type \((n,r)\). More precisely:
 
\begin{proposition}\label{prop:parity-cmf}
For all \(n\ge r\ge2\),
\begin{equation}\label{eq:parity-cmf-bounds}
  \bigl((n-r+1)!\bigr)^{r-1}
  \le\card{\CMFP(n,r)}
  \le r^n\bigl((r-1)n\bigr)!\,2^{(r-1)n}.
\end{equation}
\end{proposition}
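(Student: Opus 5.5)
The plan is to prove the two inequalities in \eqref{eq:parity-cmf-bounds} separately. Both rest on \cref{lem:parity-comparison}: for a parity matrix \(A\) and \(I\in\binom{[n]}r\), the matching \(M_I\) is decided by pairwise comparisons of matchings in \(\mathcal M_I\). Each such comparison depends only on the entry of largest priority in the symmetric difference of the two matchings, and on its parity. So \(\Lambda(A)\) depends only on two pieces of data: the relative order of the \(nr\) priorities, and the \(nr\) parities.

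\emph{Upper bound.} The naive count of \((nr)!\,2^{nr}\) combinatorial data is too large, so the plan is to compress one entry per row. For each row \(i\), let \(k(i)\) be the column holding the largest priority in that row; there are \(r^n\) choices for the map \(i\mapsto k(i)\). The goal is to show that \(\Lambda(A)\) is determined by three pieces of data: the map \(k\), the relative order of the remaining \((r-1)n\) priorities, and their parities. This gives at most \(r^n((r-1)n)!\,2^{(r-1)n}\) values.

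To prove this, I would normalize \(A\) without changing \(\Lambda(A)\). First, raise the priority of each row-top entry \((i,k(i))\) above every other priority. Second, set its parity to \(+1\). Third, order the row-top entries among themselves canonically.

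The justification I would try is as follows. When two matchings \(M,M'\in\mathcal M_I\) both differ in row \(i\), exactly one of them may use \((i,k(i))\). The argument should then show that the outcome of the comparison is forced by which matching uses row-top entries, together with the comparison among the non-top entries. This is the step I expect to be the main obstacle. The difficulty is that a row-top entry of one row can have lower priority than ordinary entries of other rows, so the "largest element of the symmetric difference" may change under the normalization. If this direct normalization fails, the fallback is to count the possible orderings more cleverly instead: interleave the \(n\) row-top entries into the order of the other entries, and bound the interleavings that matter by the \(r^n\) factor. The same accounting trick would apply to their parities.

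\emph{Lower bound.} Here I would construct explicit parity matrices, all with parities \(+1\), and use a band structure. The entries of column \(1\) get the lowest priorities. The entries of column \(k\) get priorities in a band above the band of column \(k-1\), for \(k=2,\dots,r\). The first \(r-1\) rows are special anchor rows whose priorities sit at the extremes of each band. Within the band of column \(k\ge2\), the remaining \(n-r+1\) rows are ordered by an arbitrary permutation \(\pi_k\). This gives \(((n-r+1)!)^{r-1}\) matrices, and it remains to show that they yield pairwise distinct matching fields.

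With the bands in place, \cref{lem:parity-comparison} makes \(M_I\) greedy: column \(r\) is assigned to the row of \(I\) with largest priority there, then column \(r-1\) among the remaining rows, and so on. To recover \(\pi_k\) for two non-anchor rows \(i,i'\), take \(I\) to consist of \(i\), \(i'\), and \(r-2\) anchor rows chosen so that the anchors absorb every column except \(k\) and one lower column. Then \(M_I\) assigns column \(k\) to whichever of \(i,i'\) comes higher in \(\pi_k\). Hence distinct tuples \((\pi_2,\dots,\pi_r)\) give distinct elements of \(\CMFP(n,r)\). The remaining work here is checking that suitable anchor placements exist.
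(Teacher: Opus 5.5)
Your lower bound is fine, but the upper bound has a genuine gap. You compress the wrong entry in each row, and the claim behind your normalization is false.

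\textbf{Why the row-maximum approach fails.} You claim that $\Lambda(A)$ is determined by three things: the map $i\mapsto k(i)$, where $k(i)$ is the column of the row \emph{maximum}; the order of the other $(r-1)n$ priorities; and their parities. No normalization can establish this. By \cref{lem:parity-comparison}, a comparison is decided by the top edge of $M\mathbin{\triangle}M'$. That edge is very often a row-top entry, and then its parity matters.

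Here is a concrete counterexample. Take $n=r=2$ with priorities $w_{1,1}=4$, $w_{2,2}=3$, $w_{2,1}=2$, $w_{1,2}=1$.
\begin{itemize}
\item The row tops are $(1,1)$ and $(2,2)$, and the only row set is $I=\{1,2\}$.
\item The diagonal matching $D$ and the antidiagonal $D'$ have symmetric difference equal to all four entries.
\item The top edge of that difference is $(1,1)$, which gives $\sgn\bigl(\omega_A(D)-\omega_A(D')\bigr)=\eta_{1,1}$.
\end{itemize}
Flipping $\eta_{1,1}$ therefore switches $M_I$ while leaving all of your data unchanged.

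The fallback does not rescue this as it stands. The factor $r^n$ already pays for the map $k$. The row-top entries carry real information: their parities, and in general their relative positions. Your count has no room for it.

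\textbf{The missing idea.} Drop the row \emph{minimum} instead. Let $k_i$ be the column of lowest priority in row $i$. Let $e=(i,k)$ be the edge of largest priority in $M\mathbin{\triangle}M'$, say $e\in M$.
\begin{itemize}
\item Then $M'$ uses a different edge $(i,k')$ in row $i$, and this edge is not in $M$.
\item So $(i,k')\in M\mathbin{\triangle}M'$, hence $w_{i,k}>w_{i,k'}\ge w_{i,k_i}$, and therefore $k\neq k_i$.
\item Row minima are thus never decisive, so their priorities and parities are irrelevant.
\end{itemize}
Consequently $\Lambda(A)$ is determined by $(k_i)_{i\in[n]}$, the priority order on the remaining $(r-1)n$ entries, and their parities. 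This gives $r^n\bigl((r-1)n\bigr)!\,2^{(r-1)n}$ directly, with no normalization. This is the paper's argument.

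\textbf{Lower bound.} Your construction is correct and is essentially a mirror image of the paper's. The paper places the anchor diagonal entries $(j,j)$ above everything, encodes the permutations in columns $1,\ldots,r-1$, and pushes column $r$ to the bottom. You instead use column bands, with column $1$ as the dummy column.

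The anchor placement you left open does exist. Put anchor row $j$ at the top of band $j+1$ and at the bottom of every other band. To recover $\pi_k$, use the anchors $[r-1]\setminus\{k-1\}$. The greedy rule then assigns columns $r,\ldots,k+1$ to anchors $r-1,\ldots,k$. At column $k$, the remaining anchors $1,\ldots,k-2$ lie below both $i$ and $i'$, so column $k$ goes to whichever of them is higher in $\pi_k$.
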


\paragraph{The upper bound.}
The upper bound follows by identifying the data that determine all
maximizing matchings.

\begin{lemma}\label{lem:parity-minimal-data}
Let \(A\) be an \(n\times r\) parity matrix. For each row \(i\in[n]\), let
\(k_i\) be the column index of the entry of lowest priority in row \(i\),
that is, \(w_{i,k_i}<w_{i,k}\) for all \(k\neq k_i\). Let
\(E^\circ\coloneqq\{(i,k)\in[n]\times[r]:k\neq k_i\}\) be the set of all
other entries. Then \(\Lambda(A)\) is determined by the indices
\((k_i)_{i\in[n]}\), the order of the priorities on \(E^\circ\), and the
parities on \(E^\circ\).
\end{lemma}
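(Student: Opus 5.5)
Since $\Lambda(A)$ is defined matching by matching, the plan is to show that, for every $I\in\binom{[n]}{r}$ and every two distinct matchings $M,M'\in\mathcal M_I$, the comparison of $\omega_A(M)$ with $\omega_A(M')$ can be decided from the data listed in the statement. By \cref{lem:parity-comparison}, applied with $E=[n]\times[r]$, $F=M$ and $F'=M'$, the sign of $\omega_A(M)-\omega_A(M')$ equals $\pm\eta_{e^\star}$. Here $e^\star$ is the entry of $M\mathbin{\triangle}M'$ of largest priority, and the sign $\pm$ records whether $e^\star$ lies in $M$ or in $M'$. So it suffices to show that $e^\star$ always lies in $E^\circ$, and that it can be identified using only the $k_i$ and the priority order on $E^\circ$. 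Once this holds, its parity is part of the given data. Then all pairwise comparisons, and hence the maximizer $M_I$, are determined.

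The key step is to show $e^\star\in E^\circ$. Take any entry $(i,k_i)\in M\mathbin{\triangle}M'$, say $(i,k_i)\in M$. Since $M'$ is a perfect matching of $K_{I,[r]}$, the row $i\in I$ is matched in $M'$ to some column $k'\neq k_i$. Thus $(i,k')\in M'\setminus M$, because $M$ already matches $i$ to $k_i$. By the definition of $k_i$, we have $w_{i,k'}>w_{i,k_i}$. Hence every non-$E^\circ$ element of the symmetric difference is dominated by an $E^\circ$ element of the symmetric difference in the same row. This shows that the maximum-priority element of $M\mathbin{\triangle}M'$ lies in $E^\circ$.

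Moreover, since the maximum is attained in $E^\circ$, the element $e^\star$ is the maximum of $(M\mathbin{\triangle}M')\cap E^\circ$ for the priority order restricted to $E^\circ$. This set is nonempty, since $M\neq M'$ forces some row to differ, and the previous step then supplies an element of $E^\circ$. The set $(M\mathbin{\triangle}M')\cap E^\circ$ is computable from the $k_i$, which determine $E^\circ$. So $e^\star$ is determined by the relative order on $E^\circ$ alone, and we never need to compare priorities across $E^\circ$ and its complement. Finally, $\eta_{e^\star}$ is among the given parities, so the sign of $\omega_A(M)-\omega_A(M')$ is determined.

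The main obstacle is precisely this reduction: one must check that the lowest-priority entries never decide a comparison. The row argument above handles it cleanly, because a perfect matching uses every row of $I$. I would double-check the edge case where both $M$ and $M'$ avoid $(i,k_i)$ but differ in row $i$; there the elements are already in $E^\circ$, so no issue arises.
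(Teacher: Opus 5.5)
Your proof is correct and follows the paper's argument essentially step for step. Both show that the top-priority edge of $M\mathbin{\triangle}M'$ lies in $E^\circ$, because the other edge of $M\mathbin{\triangle}M'$ in the same row has priority at least $w_{i,k_i}$; that edge is then the top element of $(M\mathbin{\triangle}M')\cap E^\circ$, and its parity decides the comparison via \cref{lem:parity-comparison}. The only difference is cosmetic: you argue contrapositively that every $(i,k_i)$ in the symmetric difference is dominated, while the paper argues directly on the maximal edge.
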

\begin{proof}
Fix \(I\in\binom{[n]}{r}\) and distinct matchings \(M,M'\in\mathcal M_I\),
and let \(e=(i,k)\) be the edge of largest priority in
\(M\mathbin{\triangle}M'\). Since \(M\) and \(M'\) each contain exactly one
edge in row \(i\), the other one, \((i,k')\), also lies in
\(M\mathbin{\triangle}M'\); hence \(w_{i,k}>w_{i,k'}\ge w_{i,k_i}\), so
\(e\in E^\circ\).

Thus \(e\) is the edge of largest priority in
\((M\mathbin{\triangle}M')\cap E^\circ\), so it is determined by the
priority order on \(E^\circ\) alone. Its parity then tells us which of
\(M\) and \(M'\) has larger weight, by \cref{lem:parity-comparison}. We can therefore compare any two
matchings using only the stated data, and hence determine the unique
maximizing matching \(M_I\). Applying this argument to every row set
\(I\) determines the entire matching field \(\Lambda(A)\).
\end{proof}

\begin{proof}[Proof of the upper bound in \cref{prop:parity-cmf}]
There are \(r^n\) choices for \((k_i)_{i\in[n]}\). For each choice,
\(E^\circ\) has \((r-1)n\) elements, with at most
\(\bigl((r-1)n\bigr)!\) priority orders and \(2^{(r-1)n}\) parity
patterns. By \cref{lem:parity-minimal-data}, each such choice determines
at most one matching field. Multiplying gives the upper bound in
\eqref{eq:parity-cmf-bounds}.
\end{proof}

\paragraph{The lower bound.}
For the lower bound, we construct parity matrices from tuples of
permutations and recover these permutations from the resulting matching
fields. In this way, we show that \(\card{\CMFP(n,r)}\) is at least the
number of these tuples.

\begin{proof}[Proof of the lower bound in \cref{prop:parity-cmf}]
Put \(m=n-r+1\), so that \(n=(r-1)+m\), and let
\(\sigma_1,\ldots,\sigma_{r-1}\in\Sym_m\). Let \(A\) be an \(n\times r\)
parity matrix with all parities \(+1\) whose priorities satisfy:
\begin{itemize}
  \item[(a)] the diagonal entries \((j,j)\), \(j\in[r-1]\), have larger
    priorities than all other entries;\footnote{The first \(r-1\) rows
    of \(A\) play the role of the first \(r-1\) rows of the tropical
    identity \(I_r\) in the pointed augmentation of
    \cref{sec:pointed-cmf}.}
  \item[(b)] for each \(k\in[r-1]\) and \(a,b\in[m]\),
    \(w_{r-1+a,k}>w_{r-1+b,k}\) if and only if \(\sigma_k(a)>\sigma_k(b)\);
  \item[(c)] for each \(i\in[m]\) and \(k\in[r-1]\), the entry
    \((r-1+i,k)\) has larger priority than every entry in column \(r\).
\end{itemize}
For instance, we can take \(A=(s^{W_{i,k}})_{(i,k) \in [n]\times[r]}\) with
\[
  W=\begin{pNiceArray}{cccc|c}[first-row,margin]
    \rule{0pt}{5ex} & & & & \\
    rm+1 & \ast & \cdots & \ast & \ast\\
    \ast & rm+2 & \cdots & \ast & \ast\\
    \vdots & \vdots & \ddots & \vdots & \vdots\\
    \ast & \ast & \cdots & rm+r-1 & \ast\\
    \hline
    m+\sigma_1(1) & 2m+\sigma_2(1) & \cdots & (r-1)m+\sigma_{r-1}(1) & -1\\
    \vdots & \vdots & & \vdots & \vdots\\
    m+\sigma_1(m) & 2m+\sigma_2(m) & \cdots & (r-1)m+\sigma_{r-1}(m) & -m
    \CodeAfter
      \begin{tikzpicture}[decoration={brace,amplitude=4pt}]
        \draw[decorate] ([xshift=1mm,yshift=3.5mm]1-|1)
          -- ([xshift=-1mm,yshift=3.5mm]1-|5) node[midway,above=4pt] {\(r-1\)};
        \draw[decorate] ([xshift=1mm,yshift=3.5mm]1-|5)
          -- ([xshift=-1mm,yshift=3.5mm]1-|6) node[midway,above=4pt] {\(1\)};
        \draw[decorate] ([xshift=5mm,yshift=-0.5mm]1-|6)
          -- ([xshift=5mm,yshift=0.5mm]5-|6) node[midway,right=5pt] {\(r-1\)};
        \draw[decorate] ([xshift=5mm,yshift=-0.5mm]5-|6)
          -- ([xshift=5mm,yshift=0.5mm]8-|6) node[midway,right=5pt] {\(m\)};
      \end{tikzpicture}
  \end{pNiceArray}\hspace{3.5em},
\]
where the entries \(\ast\) are pairwise distinct integers smaller than
\(-m\).

\medskip

Now fix \(k\in[r-1]\) and \(1\le a<b\le m\), and consider the row set $I=I^k_{a,b}\coloneqq\bigl([r-1]\setminus\{k\}\bigr)\cup\{r-1+a,\,r-1+b\}$. By~(a) and \cref{lem:parity-comparison}, the matching \(M_I\) of
\(\Lambda(A)\) contains every diagonal entry \((j,j)\) with \(j\neq k\),
so it matches the rows \(r-1+a\) and \(r-1+b\) to the columns \(k\) and
\(r\). By~(c), the comparison of the two possible matchings is decided
by their entries in column \(k\), and by~(b), the entry \((r-1+a,k)\)
has the larger priority there exactly when \(\sigma_k(a)>\sigma_k(b)\).
Hence, by \cref{lem:parity-comparison}, \(M_I\) matches row \(r-1+a\) to
column \(k\) if and only if \(\sigma_k(a)>\sigma_k(b)\). Choosing
\(I^k_{a,b}\) for all pairs \(a<b\), the corresponding matchings
\(M_{I^k_{a,b}}\) determine \(\sigma_k\), and doing so for all
\(k\in[r-1]\) determines the tuple of permutations. The map
\[
  \Sym_m^{\,r-1}\longrightarrow\CMFP(n,r),
  \qquad(\sigma_1,\ldots,\sigma_{r-1})\longmapsto\Lambda(A)
\]
is therefore injective, which proves the lower bound.
\end{proof}

\begin{proof}[Proof of \cref{thm:main-parity}~(i)]
Fix \(r\ge2\). Since \(k!=k^k\,2^{-\Oh(k)}\) by Stirling's formula, the
bounds in \eqref{eq:parity-cmf-bounds} satisfy
\(\bigl((n-r+1)!\bigr)^{r-1}=n^{(r-1)n}\,2^{-\Oh(n)}\) and
\(r^n\bigl((r-1)n\bigr)!\,2^{(r-1)n}=n^{(r-1)n}\,2^{\Oh(n)}\).
\end{proof}

\subsection{Parity labeled order types}\label{sec:parity-ot}

We now pass from matching fields to labeled order types. As in the signed
tropical case of \cref{sec:signed-chirotopes}, we introduce signed
parity numbers, define the chirotope of a parity point sequence
through signed tropical determinants, and then relate it to signed
tropical chirotopes by evaluating at large \(s\).

\medskip

The set of \emph{signed parity numbers} is \(\Ppm\coloneqq\{\pm1\}\times\P\).
An element \((\varepsilon,a)\in\Ppm\) has \emph{outer sign} \(\varepsilon\) and
\emph{modulus}~\(a\). For \(a=\eta s^w\), it represents
\(\varepsilon t^{a}=\varepsilon t^{\eta s^w}\) and evaluates at \(s>1\) to
the signed tropical number \((\varepsilon,a(s))\in\strop\). We write
\(\tone=(+1,0)\) for the element of \(\Ppm\),
representing \(t^0=1\). We use the signed tropical arithmetic of
\cref{sec:signed-chirotopes}, with moduli in \(\Z[s^{\pm1}]\) under the
order of Laurent polynomials (see \cref{sec:parity-numbers}), and signed
tropical determinants are defined for matrices with entries in \(\Ppm\).

\medskip

A \emph{parity point sequence} is a sequence \(\q=(q^1,\ldots,q^n)\) of
points \(q^i\in\Ppm^d\) whose coordinates \(q^i_k\) have nonzero moduli and
pairwise distinct priorities \(w_{i,k}\), for \(i\in[n]\) and \(k\in[d]\).
Let \(\bar Q\) be the homogenized
matrix of \(\q\), whose \(i\)th row is \((q^i,\tone)\), and let
\(\varepsilon_{i,k}\) and \(a_{i,k}\) be the outer sign and the modulus of
\(q^i_k\). Notice that the moduli of \(\bar Q\) do not form a parity
matrix, since the entries in its last column are the same.
For \(I\in\binom{[n]}{d+1}\) and \(M\in\mathcal M_I\), the term of
\(\tdet(\bar Q_I)\) indexed by \(M\) has modulus equal to the weight
\[  \omega_{\q}(M)\coloneqq\sum_{(i,k)\in M,\ k\le d}a_{i,k}.\]
Since the entries \(a_{i,k}\) have pairwise distinct
priorities, there is a unique matching
\(M_I\) of largest weight, and \(\tdet(\bar Q_I)\) is not balanced, with
\begin{equation}\label{eq:parity-chirotope}
  \sgn\tdet(\bar Q_I)=\sgn(M_I)\prod_{(i,k)\in M_I,\ k\le d}\varepsilon_{i,k}.
\end{equation}
The same argument applies to every square submatrix of \(\bar Q\). The \emph{parity chirotope} of \(\q\) is the map \(\chi_{\q}\) that
associates to every \((d+1)\)-tuple \(I=(i_1,\ldots,i_{d+1})\) of
distinct indices in \([n]\) the sign
\[
  \chi_{\q}(I)=\sgn\tdet\begin{pmatrix}
    q^{i_1} & \tone\\
    \vdots & \vdots\\
    q^{i_{d+1}} & \tone
  \end{pmatrix}\in\{\pm1\}.
\]
Two parity point sequences are said to {\em have the same parity labeled order type} if and only if they have the same parity chirotope. We write \(\OTP(n,d)\) for the set of parity labeled order types. (Note that by definition, in a parity point sequence all coordinates have pairwise distinct priorities.)

\medskip

For fixed \(s>1\), the evaluated point sequence \(\q(s)\in(\strop^d)^n\)
need not be in min-general position, and its chirotope may change as
\(s\) grows. However, for all sufficiently large \(s\), each square
submatrix of \(|\bar Q(s)|\) has the same unique matching of largest
weight as the corresponding submatrix of \(|\bar Q|\). Hence, for all
sufficiently large \(s\), \(\q(s)\) is in min-general position and
\(\chi_{\q(s)}=\chi_{\q}\). Together with
\cref{prop:tropical-real-realizability}, this gives
\(\OTP(n,d)\subseteq\OTT(n,d)\subseteq\OTR(n,d)\) for every \(n>d\ge1\). Again, for every fixed \(d\ge2\) and as \(n\to\infty\), the labeled parity order types
of type \((n,d)\) are actually a vanishingly small subset of the
labeled tropical order types of type \((n,d)\). More precisely:

\begin{proposition}\label{prop:parity-ot}
For all \(d\ge1\) and \(n\ge d+1\),
\begin{equation}\label{eq:parity-ot-bounds}
  \bigl((n-d)!\bigr)^d
  \le\card{\OTP(n,d)}
  \le(dn)!\,4^{dn}.
\end{equation}
\end{proposition}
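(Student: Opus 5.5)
The plan mirrors \cref{prop:parity-cmf}: for the upper bound we identify a small amount of data that determines every chirotope value, and for the lower bound we encode $d$ permutations of $[n-d]$ into the chirotope.

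\textbf{Upper bound.} By \eqref{eq:parity-chirotope}, $\chi_{\q}(I)$ is determined by the maximizing matching $M_I$ of $\omega_{\q}$ together with the outer signs $\varepsilon_{i,k}$ on the entries of $M_I$ with $k\le d$. We then need to see what determines $M_I$. Compare two matchings $M\ne M'$ in $\mathcal M_I$. By \cref{lem:parity-comparison}, applied to the finite family of the entries $a_{i,k}$ with $k\le d$, the comparison is decided by the entry of largest priority in $(M\mathbin{\triangle}M')\cap([n]\times[d])$, and by that entry's parity. This set is nonempty, because two distinct matchings cannot differ only in column $d+1$.

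So the whole chirotope is determined by three pieces of data:
\begin{itemize}
\item the total order of the $dn$ priorities, giving at most $(dn)!$ choices;
\item the $dn$ parities $\eta_{i,k}$, giving $2^{dn}$ choices;
\item the $dn$ outer signs, giving $2^{dn}$ choices.
\end{itemize}
This yields at most $(dn)!\,4^{dn}$ parity chirotopes. Unlike in \cref{lem:parity-minimal-data}, there is no row-minimum reduction here, because the homogenizing column contributes nothing to the weights.

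\textbf{Lower bound.} Put $m=n-d$ and fix $\sigma_1,\ldots,\sigma_d\in\Sym_m$. We build the parity point sequence as follows.
\begin{itemize}
\item All outer signs and all parities are $+1$.
\item The first $d$ points get huge priorities on their diagonal coordinates $(j,j)$.
\item Each of the remaining $m$ points, indexed by $a\in[m]$, gets priority $jm+\sigma_j(a)$ in coordinate $j$.
\item All other priorities are distinct and very negative.
\end{itemize}
This is exactly the matrix of the lower-bound proof of \cref{prop:parity-cmf}, except that the column $r=d+1$ is the constant $\tone$, whose contribution to the weights is $0$.

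For $k\in[d]$ and $a<b$, take $I=([d]\setminus\{k\})\cup\{d+a,d+b\}$. By \cref{lem:parity-comparison}, the diagonal entries $(j,j)$ with $j\ne k$ are forced into $M_I$. The remaining two rows are then matched to the columns $k$ and $d+1$, and the two candidate matchings differ only in column $k$ (column $d+1$ carries weight $0$). Hence row $d+a$ takes column $k$ if and only if $\sigma_k(a)>\sigma_k(b)$. The two candidate matchings differ by a transposition, so their permutation signs are opposite. Since all outer signs are $+1$, \eqref{eq:parity-chirotope} shows that $\chi_{\q}(I)$ equals the sign of the winning matching. This sign therefore records whether $\sigma_k(a)>\sigma_k(b)$, and so $\sigma_k$ can be read off from the chirotope. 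The map $\Sym_m^{\,d}\to\OTP(n,d)$ is injective, which gives $((n-d)!)^d$.

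\textbf{Main obstacle.} The delicate step is the sign bookkeeping in the lower bound. One must check that the permutation signs $\sgn(M)$ of the two candidate matchings are genuinely opposite once the forced diagonal entries are included. This holds because the two matchings differ by swapping the columns $k$ and $d+1$ between two rows. One must also confirm that the outer signs do not cancel this difference, which is where the uniform $+1$ choice is used. The upper bound is routine once the comparison is reduced to \cref{lem:parity-comparison}.
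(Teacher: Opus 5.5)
Your proposal is correct and follows essentially the same route as the paper. The upper bound counts the $(dn)!$ priority orders, $2^{dn}$ parities and $2^{dn}$ outer signs through \eqref{eq:parity-chirotope} and \cref{lem:parity-comparison}. The lower bound reuses the permutation matrix from \cref{prop:parity-cmf} with its last column replaced by the homogenizing $\tone$, and reads off each $\sigma_k$ from $\chi(I^k_{a,b})$ via the opposite signs of the two competing matchings. Your explicit check that $(M\mathbin{\triangle}M')\cap([n]\times[d])$ is nonempty is a detail the paper leaves implicit.
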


\paragraph{The upper bound.}
As for the parity matching fields, the upper bound follows by identifying the data
that determine all maximizing matchings, now together with the outer
signs.

\begin{proof}[Proof of the upper bound in \cref{prop:parity-ot}]
By \eqref{eq:parity-chirotope}, \(\chi_{\q}\) is determined by the
matchings \(M_I\) and the outer signs \(\varepsilon_{i,k}\). The matchings \(M_I\) are determined by the
priority order and the parities of the \(dn\) entries \(a_{i,k}\).
Hence \(\chi_{\q}\) is determined by the priority order, the parities
and the outer signs of the affine coordinates. There are at most \((dn)!\)
priority orders, \(2^{dn}\) parity patterns and \(2^{dn}\) outer-sign
patterns, which gives the upper bound in \eqref{eq:parity-ot-bounds}.
\end{proof}

\paragraph{The lower bound.}
We use the permutation construction for parity matching fields and
show that the signs of its maximizing matchings on selected row sets
suffice to recover all the permutations.

\begin{proof}[Proof of the lower bound in \cref{prop:parity-ot}]
Put \(m=n-d\ge1\), and fix
\(\sigma=(\sigma_1,\ldots,\sigma_d)\in\Sym_m^d\).
Let \(A=(a_{i,k})\) be the matrix constructed in the proof of the lower
bound in \cref{prop:parity-cmf}, with \(r=d+1\). Define
\(\q=(q^1,\ldots,q^n)\) by \(q^i_k=(+1,a_{i,k})\) for \(i\in[n]\) and
\(k\in[d]\). The moduli \(a_{i,k}\), for \(i\in[n]\) and \(k\in[d]\), have pairwise distinct priorities, so \(\q\) is a parity point sequence. Write
\(\chi^\sigma\coloneqq\chi_{\q}\). The matrix \(|\bar Q|\) is obtained
from \(A\) by replacing its last column with zeros.

\medskip

Fix \(k\in[d]\) and \(1\le a<b\le m\). To compare \(\sigma_k(a)\) and
\(\sigma_k(b)\), consider the row set $I=I^k_{a,b}\coloneqq\bigl([d]\setminus\{k\}\bigr)\cup\{d+a,\,d+b\}$. By construction, the diagonal entries \(a_{j,j}\),
\(j\in[d]\setminus\{k\}\), have the largest priorities and parity \(+1\). 
Thus, \(M_I\) uses all of them. There are only two ways to complete these \(d-1\) edges to a matching in \(\mathcal{M}_I\).
Let \(M_a\) be the one that matches row \(d+a\) to column \(k\) and row \(d+b\) to column
\(d+1\), and let \(M_b\) be the one that exchanges these two assignments. The common
diagonal entries cancel and the last
column contributes zero. Thus $\omega_{\q}(M_a)-\omega_{\q}(M_b)=a_{d+a,k}-a_{d+b,k}$. The entries \(a_{d+a,k}\) and \(a_{d+b,k}\) have parity \(+1\), and their priority order agrees with the order of \(\sigma_k(a)\) and
\(\sigma_k(b)\). Hence \(M_I=M_a\) if and only if \(\sigma_k(a)>\sigma_k(b)\).

\medskip

The matchings \(M_a\) and \(M_b\) differ by exchanging columns \(k\)
and \(d+1\), so their signs are opposite. Since all outer signs of
\(\q\) are \(+1\), \eqref{eq:parity-chirotope} gives
\(\chi^\sigma(I)=\sgn(M_I)\). Thus \(\chi^\sigma(I)\) determines
whether \(M_I=M_a\) or \(M_I=M_b\), and hence whether
\(\sigma_k(a)>\sigma_k(b)\). Taking all pairs \(a<b\) recovers
\(\sigma_k \in\Sym_m\), and doing so for every \(k\in[d]\) recovers \(\sigma\).
Consequently, \(\sigma\mapsto\chi^\sigma\) is injective, giving
\(\card{\OTP(n,d)}\ge(m!)^d=((n-d)!)^d\).
\end{proof}

\begin{proof}[Proof of \cref{thm:main-parity}~(ii)]
Fix \(d\ge1\). As in the proof of~(i), the bounds in
\eqref{eq:parity-ot-bounds} satisfy
\(\bigl((n-d)!\bigr)^d=n^{dn}\,2^{-\Oh(n)}\) and
\((dn)!\,4^{dn}=n^{dn}\,2^{\Oh(n)}\).
\end{proof}

\appendix

\section{The cell count in the Goodman--Pollack extension argument}
\label{a:mistake-GP}

Goodman and Pollack~\cite[\S~5(i)]{GoodmanPollack1986} stated that the arrangement of the $\binom{n}{d}$ hyperplanes spanned by $n$ points in general position in $\R^d$ has
  \[
    \sum_{k=0}^{d}\binom{\binom{n}{d}}{k}
    \;-\; n\binom{\binom{n-1}{d-1}-1}{d}
  \]
  full-dimensional cells. For $d=10$ and $n=12$, this expression equals $-32{,}550{,}951{,}444$, so it cannot be an exact cell-count formula.

\medskip

A coarser count nevertheless suffices to recover the lower bound $n^{d^2n-\Oh(n/\log n)}$ of Goodman and Pollack~\cite[\S~5(i)]{GoodmanPollack1986} for the number of realizable uniform chirotopes of size $n$ and rank $d+1$. Fix some vector $\vec{u}$ and let $\h$ denote the set of the $\binom{n}d$ hyperplanes. Suppose that the $n$ initial points are in suitable general position that (i) no edge of the arrangement of $\h$ is contained in a hyperplane orthogonal to $\vec{u}$, and (ii) any $d$ hyperplanes from $\h$ spanned by {\em disjoint} $d$-tuples of points intersect in a vertex that lies on no other hyperplane from $\h$; call these vertices {\em red}. The number of red vertices is at least $C n^{d^2}$ for all $n\geq d^2$, for some constant $C>0$ that depends only on $d$. Now, let us charge every cell of the arrangement of $\h$ bounded in the $\vec{u}$ direction to its extreme vertex in that direction. Every red vertex, being on exactly $d$ hyperplanes from $\h$, must be charged at least once. There are therefore at least $C n^{d^2}$ cells defined by $\h$. Keeping these general-position conditions at each insertion, the number of realizable uniform chirotopes of size $n$ and rank $d+1$ is therefore at least
  \[
    \prod_{m=d^2}^{n-1}Cm^{d^2}
    =n^{d^2n-\Oh(n/\log n)}.
  \]

\section{Comparison to another approaches for the upper bound of the coherent matching fields}\label{app:upper-bound-comparison}

A natural question is why not bound \(\card{\CTT(n,r-1)}\) by the classical
methods used for upper bounds on realizable order types, rather than by
the polyhedral row-counting bound of \cref{sec:upper-unsigned}.
Despite that this approache looks quite simple, it yields the strongest of the bounds 
compared below.
Classically, Goodman and Pollack~\cite{GoodmanPollack1986} bound the number
of labeled order types by applying
Milnor's theorem~\cite[Theorem~3]{Milnor1964} to the product of the
chirotope determinants; Alon~\cite{Alon1995} recasts the same count via
Warren's theorem on the signs of those determinants~\cite[Theorem~3]{Warren1968}.

The same sign-counting strategy applies here to coherent matching fields.
We spell out the resulting bound from Warren's theorem and revisit
Santos's supporting-hyperplane count~\cite{Santos2004}, which replaces
each tropical hyperplane by the ordinary hyperplanes supporting its facets.
Both estimates have the leading term \((r-1)^2\,n\log_2 n\), but larger
linear coefficients than the row-counting bound of
\cref{cor:unsigned-upper-bound}. This comparison explains the advantage
of the approach used in \cref{sec:upper-unsigned}.

\paragraph{Counting coherent matching fields with Warren's theorem.}

Put \(q=n+r\). By \eqref{eq:ct-rt-cmf}, we have
\(\card{\CTT(n,r-1)}=\card{\CMFTbullet(q,r)}
\leq\card{\CMFT(q,r)}\), so it suffices to count coherent matching
fields of type \((q,r)\).
Consider a \(q\times r\) weight matrix \(W\).  For
\(I\in\binom{[q]}{r}\) and two distinct bijections
\(\sigma,\tau\colon I\to[r]\), set
\[
  p_{I,\sigma,\tau}(W)
  \coloneqq
  \sum_{i\in I}W_{i,\sigma(i)}
  -\sum_{i\in I}W_{i,\tau(i)}.
\]
The coherent matching field of \(W\) is determined by the signs of these
comparison forms. Adding constants to rows or columns does not change
the signs, so after normalization the parameter space has dimension
\(D=(q-1)(r-1)\). There are at most
\(m=\binom{q}{r}\binom{r!}{2}\) such forms.
Warren's theorem~\cite[Theorem~3]{Warren1968} gives
\(\card{\CMFT(q,r)}\leq(4em/D)^D\), and hence, for fixed \(r\),
\begin{equation}\label{eq:warren-tropical-bound}
  \log_2\card{\CTT(n,r-1)}
  \leq
  (r-1)^2\,n\log_2 n
  +(r-1)n\log_2\!\left(\frac{2e(r!-1)}{r-1}\right)
  +\Oh(\log n).
\end{equation}

\paragraph{Santos's supporting-hyperplane count.}

Santos~\cite[Theorem~5.4]{Santos2004} replaces every tropical hyperplane by the
\(\binom{r}{2}\) ordinary affine hyperplanes supporting its facets.  This
produces a labeled arrangement of \(n\binom{r}{2}\) ordinary hyperplanes
whose chirotope refines the corresponding element of
\(\CTT(n,r-1)\). Each chirotope determinant, indexed by an
\(r\)-tuple of supporting hyperplanes, is a linear form in the apex
coordinates. After normalizing row and column translations, the parameter
space has dimension \(D=(n-1)(r-1)\), and there are at most
\(m=\binom{n\binom{r}{2}}{r}\) nonzero linear determinant forms.
Their zero sets stratify the parameter
space into cells on which the labeled chirotope, and hence the corresponding
element of \(\CTT(n,r-1)\), is constant.  By \eqref{eq:ct-rt-cmf}, the
maximal number of cells of all dimensions in such a hyperplane arrangement
gives
\[
  \card{\CTT(n,r-1)}
  =
  \card{\RT(n,r)}
  \leq
  \sum_{i=0}^{D}2^i\binom{m}{i}.
\]
Estimating this sum by \(m^D\) for large \(n\), together with a coarse
estimate of \(m\), gives the coefficient \(r(r-1)\) in the upper bound
displayed by Santos. If we keep the binomial coefficient instead, the
last summand dominates and Stirling's formula gives, for fixed \(r\),
\begin{equation}\label{eq:santos-sharp-bound}
  \log_2\card{\CTT(n,r-1)}
  \leq
  (r-1)^2\,n\log_2 n
  +(r-1)n\log_2\!\left(
    \frac{2e\binom{r}{2}^{r}}{r!(r-1)}
  \right)
  +\Oh(\log n).
\end{equation}
Thus the sharper evaluation of Santos's own cell count already replaces
\(r(r-1)\) by \((r-1)^2\) in the leading term.

\paragraph{Comparison of the bounds.}

For fixed \(r\), the bounds in \eqref{eq:warren-tropical-bound},
\eqref{eq:santos-sharp-bound}, and \cref{cor:unsigned-upper-bound} have
the same leading term \((r-1)^2\,n\log_2 n\), but different linear
coefficients. The first two coefficients are positive, whereas the
coefficient \(-r\log_2((r-1)!)\) in our row-counting bound is zero for
\(r=2\) and negative for \(r\geq3\). For example, when \(r=3\), the
first two coefficients are approximately \(7.529\) and \(7.225\),
respectively, compared with \(-3\) in \cref{cor:unsigned-upper-bound}.

\printbibliography

\end{document}